\documentclass[12pt,a4paper]{article}
\usepackage{amsmath,amstext,amssymb,amscd, color}


\oddsidemargin=5mm \textwidth=155mm \textheight=235mm
\topmargin=-10mm

\sloppy

\newcommand{\Xcomment}[1]{}

\newtheorem{theorem}{Theorem}[section]
\newtheorem{lemma}[theorem]{Lemma}
\newtheorem{corollary}[theorem]{Corollary}

\newtheorem{prop}[theorem]{Proposition}

\makeatletter \@addtoreset{equation}{section} \makeatother

\newenvironment{proof}{\noindent{\bf Proof}\/}%
{\hfill$\qed$\medskip}

\def\qed{ \ \vrule width.1cm height.3cm depth0cm}

\newenvironment{numitem}{\refstepcounter{equation}\begin{enumerate}%
\item[(\thesection.\arabic{equation})]$\quad$}{\end{enumerate}}
\newenvironment{numitem1}{\refstepcounter{equation}\begin{enumerate}%
\item[(\thesection.\arabic{equation})]}{\end{enumerate}}

\newcommand{\refeq}[1]{(\ref{eq:#1})}  

 \makeatletter
\renewcommand{\section}{\@startsection{section}{1}{0pt}%
{-3.5ex plus -1ex minus -.2ex}{2.3ex plus .2ex}%
{\normalfont\Large}}
 \makeatother

 \makeatletter
\renewcommand{\subsection}{\@startsection{subsection}{2}{0pt}%
{-3.0ex plus -1ex minus -.2ex}{1.5ex plus .2ex}%
{\normalfont\normalsize\bf}}
 \makeatother

\def\Rset{{\mathbb R}}
\def\Zset{{\mathbb Z}}

\def\Ascr{{\cal A}}
\def\Bscr{{\cal B}}
\def\Cscr{{\cal C}}
\def\Dscr{{\cal D}}
\def\Fscr{{\cal F}}

\def\Iscr{{\cal I}}
\def\Lscr{{\cal L}}

\def\Pscr{{\cal P}}

\def\Sscr{{\cal S}}
\def\Tscr{{\cal T}}
\def\Wscr{{\cal W}}
\def\Xscr{{\cal X}}
\def\Yscr{{\cal Y}}

\def\tilde{\widetilde}
\def\hat{\widehat}
\def\bar{\overline}
\def\eps{\epsilon}

\def\intlens{\;\hbox{\unitlength=1mm\begin{picture}(3,4)%
\put(1,0){$)$}%
\put(0.5,0){$($}%
\end{picture}}}

\def\apartlens{\;\hbox{\unitlength=1mm\begin{picture}(5,4)%
\put(0,0){$)$}%
\put(1.5,0){$($}%
\end{picture}}\;}


\begin{document}

 \begin{center}
{\large\bf Pl\"ucker environments, wiring and tiling diagrams, \\and weakly
separated set-systems}
 \end{center}

 \begin{center}
{\sc Vladimir~I.~Danilov}\footnote[2] {Central Institute of Economics and
Mathematics of the RAS, 47, Nakhimovskii Prospect, 117418 Moscow, Russia;
emails: danilov@cemi.rssi.ru (V.I.~Danilov); koshevoy@cemi.rssi.ru
(G.A.~Koshevoy).},
{\sc Alexander~V.~Karzanov}\footnote[3]{Institute for System Analysis of the
RAS, 9, Prospect 60 Let Oktyabrya, 117312 Moscow, Russia; email:
sasha@cs.isa.ru.},
{\sc Gleb~A.~Koshevoy}$^2$
\end{center}


 \begin{quote}
 {\bf Abstract.} \small
For the ordered set $[n]$ of $n$ elements, we consider the class $\Bscr_n$ of
bases $B$ of tropical Pl\"ucker functions on $2^{[n]}$ such that $B$ can be
obtained by a series of mutations (flips) from the basis formed by the
intervals in $[n]$. We show that these bases are representable by special
wiring diagrams and by certain arrangements generalizing rhombus tilings on the
$n$-zonogon. Based on the generalized tiling representation, we then prove that
each weakly separated set-system in $2^{[n]}$ having maximum possible size
belongs to $\Bscr_n$, thus answering affirmatively a conjecture due to Leclerc
and Zelevinsky. We also prove an analogous result for a hyper-simplex
$\Delta_n^m=\{S\subseteq[n]\colon |S|=m\}$.

 \medskip
{\em Keywords}\,: Pl\"ucker relations, octahedron recurrence, wiring diagram,
rhombus tiling, TP-mutation, weakly separated sets

\medskip
{\em AMS Subject Classification}\, 05C75, 05E99
  \end{quote}

\parskip=3pt


\section{\Large Introduction}  \label{sec:intr}

For a positive integer $n$, let $[n]$ denote the ordered set of elements
$1,2,\ldots,n$. In this paper we consider a certain ``class'' $\Bscr_n\subseteq
2^{2^{[n]}}$. The collections (set-systems) $B\subseteq 2^{[n]}$ constituting
$\Bscr_n$ have equal cardinalities $|B|$, and for some pairs of collections,
one can be obtained from the other by a single ``mutation'' (or ``flip'') that
consists in exchanging a pair of elements of a very special form in these
collections. The class we deal with arises, in particular, in a study of bases
of so-called tropical Pl\"ucker functions (this seems to be the simplest
source; one more source will be indicated later). For this reason, we may
liberally call $\Bscr_n$ along with mutations on it a {\em Pl\"ucker
environment}.

More precisely, let $f$ be a real-valued function on the subsets of $[n]$, or
on the Boolean cube $2^{[n]}$. Following~\cite{BFZ}, $f$ is said to be a {\em
tropical Pl\"ucker function}, or a {\em TP-function} for short, if it satisfies
  \begin{equation}  \label{eq:P3}
 f(Xik)+f(Xj)=\max\{f(Xij)+f(Xk),\; f(Xi)+f(Xjk)\}
  \end{equation}
for any triple $i<j<k$ in $[n]$ and any subset $X\subseteq [n]-\{i,j,k\}$.
Throughout, for brevity we write $Xi'\ldots j'$ instead of
$X\cup\{i'\}\cup\ldots\cup\{j'\}$. The set of TP-functions on $2^{[n]}$ is
denoted by $\Tscr\Pscr_n$. \medskip

\noindent {\bf Definition.} A subset $B\subseteq 2^{[n]}$ is called a {\em
TP-basis}, or simply a {\em basis}, if the restriction map
$res:\mathcal{TP}_n\to \mathbb{R}^{B}$ is a bijection. In other words, each
TP-function is determined by its values on $B$, and moreover, values on $B$ can
be chosen arbitrarily.\medskip

Such a basis does exist and the simplest instance is the set $\Iscr_n$ of all
intervals $\{p,p+1,\ldots,q\}$ in $[n]$ (including the empty set); see,
e.g.,~\cite{DKK-08}. In particular, the dimension of the polyhedral conic
complex $\mathcal{TP}_n$ is equal to $|\Iscr_n|=\binom{n+1}{2}+1$. The basis
$\Iscr_n$ is called {\em standard}.

(Note that the notion of a TP-function is extended to other domains, of which
most popular are an {\em integer box} ${\bf B}^{n,a}:=\{x\in \Zset^{[n]}\colon
0\le x\le a\}$ for $a\in\Zset^{[n]}$ and a {\em hyper-simplex}
$\Delta_n^m=\{S\subseteq [n]\colon |S|=m\}$ for $m\in\Zset$ (in the later
case,~\refeq{P3} should be replaced by a relation on quadruples $i<j<k<\ell$).
Aspects involving TP-functions are studied in~\cite{BFZ,K1,LZ,Po,Sc,Sp,SW} and
some other works. Generalizing some earlier known examples,~\cite{DKK-08}
constructs a TP-basis for a ``truncated integer box'' $\{x\in {\bf
B}^{n,a}\colon m\le x_1+\ldots+x_n\le m'\}$, where $0\le m\le m'\le n$. The
domains different from Boolean cubes are beyond the main part of this paper;
they will appear in Section~\ref{sec:gen} and Appendix.)

One can see that for a basis $B$, the collection $\{[n]-X\colon X\in B\}$ forms
a basis as well, called the {\em complementary basis} of $B$ and denoted by
co-$B$. An important instance is the collection co-$\Iscr_n$ of co-intervals in
$[n]$.

Once we are given a basis $B$ (e.g., the standard one), we can produce more
bases by making a series of elementary transformations relying on~\refeq{P3}.
More precisely, suppose there is a cortege $(X,i,j,k)$ such that the four sets
occurring in the right hand side of~\refeq{P3} and one set $Y\in\{Xj,Xik\}$ in
the left hand side belong to $B$. Then the replacement in $B$ of $Y$ by the
other set $Y'$ in the left hand side results in a basis $B'$ as well (and we
can further transform the latter basis in a similar way). The basis $B'$ is
said to be obtained from $B$ by the {\em flip} (or {\em mutation}) with respect
to $X,i,j,k$. When $Xj$ is replaced by $Xik$ (thus increasing the total size of
sets in the basis by 1), the flip is called {\em raising}. When $Xik$ is
replaced by $Xj$, the flip is called {\em lowering}. We write
$j\rightsquigarrow ik$ and $ik\rightsquigarrow j$ for such flips. The standard
basis $\Iscr_n$ does not admit lowering flips, whereas its complementary basis
co-$\Iscr_n$ does not admit raising flips.

We distinguish between two sorts of flip (mutations), which inspire
consideration of two classes of bases.\medskip

\noindent {\bf Definitions.} For a TP-basis $B$ and a cortege $(X,i,j,k)$ as
above, the flip (mutation) $j\rightsquigarrow ik$ or $ik\rightsquigarrow j$ is
called {\em strong} if both sets $X$ and $Xijk$ belong to $B$ as well, and {\em
weak} in general. (The former (latter) is also called the flip in the presence
of six (res. four) ``witnesses'', in terminology of~\cite{LZ}.) A basis is
called {\em normal} (by terminology in~\cite{DKK-08}) if it can be obtained by
a series of {\em strong} flips starting from $\Iscr_n$. A basis is called {\em
semi-normal} if it can be obtained by a series of {\em any} flips starting from
$\Iscr_n$.
\medskip

Leclerc and Zelevinsky~\cite{LZ} showed that the normal bases (in our
terminology) are exactly the collections $C\subseteq 2^{[n]}$ of maximum
possible size $|C|$ that possess the strong separation property (defined
later). Also the class of normal bases admits a nice ``graphical''
characterization, even for a natural generalization to the integer boxes
(see~\cite{DKK-08,HS}): such bases one-to-one correspond to the rhombus tilings
on the related {\em zonogon}.

Let $\Bscr_n$ denote the set of semi-normal TP-bases for the Boolean cube
$2^{[n]}$; this set (together with weak flips on its members) is just the
Pl\"ucker environment of our interest mentioned at the beginning. (Note that it
is still open at present whether there exists a non-semi-normal (or ``wild'')
basis; we conjecture that there is none.)

The first goal of this paper is to characterize $\Bscr_n$. We give two
characterizations for semi-normal bases: via a bijection to special collections
of curves, that we call {\em proper wirings}, and via a bijection to certain
graphical arrangements, called {\em generalized tilings\/} or, briefly, {\em
g-tilings} (in fact, these characterizations are interrelated via planar
duality). We associate to a proper wiring $W$ (a g-tiling $T$) a certain
collection of subsets of $[n]$ called its {\em spectrum}.  It turns out that
proper wirings and g-tilings are rigid objects, in the sense that any of these is
determined by its spectrum.

(More precisely, by a wiring we mean a set of $n$ directed
non-self-intersecting curves $w_1,\ldots,w_n$ in a region $R$ of the plane
homeomorphic to a circle, where each $w_i$ begins at a point $s_i$ and ends at
a point $s'_i$, and the points $s_1,\ldots,s_n,s'_1,\ldots,s'_n$ are different
and occur in this order clockwise in the boundary of $R$. A special wiring $W$
that we deal with is defined by three axioms (W1)--(W3). Axiom~(W1) is
standard, it says that $W$ preserves (topologically) under small deformations,
i.e., no three wires have a common point, any two wires meet at a finite number
of points and they cross, not touch, at each of these points. (W2) says that
the common points of $w_i,w_j$ follow in the opposed orders along these wires.
The crucial axiom~(W3) says that in the planar graph induced by $W$, there is a
certain bijection between the faces (``chambers'') whose boundary is a directed
cycle and the regions (``lenses'') surrounded by pieces of two wires between
their neighboring common points. $W$ is called proper if none of ``cyclic''
faces is a whole lens. The spectrum of $W$ is the collection of subsets
$X\subseteq [n]$ one-to-one corresponding to the ``non-cyclic'' faces $F$,
where $X$ consists of the elements $i$ such that $F$ ``lies on the right'' from
$w_i$ (according to the direction of $w_i$). When any two wires intersect
exactly once, the dual planar graph is realized by a rhombus tiling, and vice
versa (for a more general result of this sort, see~\cite{KS}). In a general
case, the construction of a g-tiling is more intricate. Axiom~(W2) is
encountered in~\cite{Po}. Another sort of wirings, related to set-systems in a
hyper-simplex $\Delta_n^m$, is studied in~\cite{Po,Sc}.)
\medskip

The characterization of semi-normal bases via generalized tilings helps answer
one conjecture of Leclerc and Zelevinsky concerning weakly separated set-systems;
this is the second goal of our work. Recall some definitions from~\cite{LZ}.
Hereinafter for sets $A,B$, we write~$A-B$ for $A\setminus B=\{e\colon A\ni
e\not\in B\}$. Let $X,Y\subseteq[n]$. We write $X\prec Y$ if $Y-X\ne\emptyset$
and $i<j$ for any $i\in X-Y$ and $j\in Y-X$ (which slightly differs from the
meaning of $\prec$ in~\cite{LZ}); note that this relation need not be transitive.
We write $X\rhd Y$ if $Y-X$ has a (unique) bipartition $\{Y_1,Y_2\}$ such that
$Y_1,Y_2,X-Y\ne\emptyset$ and $Y_1\prec X-Y\prec Y_2$.
\medskip

 \noindent {\bf Definitions.}
Sets $X,Y\subseteq[n]$ are called: (a) {\em strongly separated} if either
$X\prec Y$ or $Y\prec X$, and (b) {\em weakly separated} if either $X\prec Y$,
or $Y\prec X$, or $X\rhd Y$ and $|X|\ge |Y|$, or $Y\rhd X$ and $|Y|\ge |X|$.
Accordingly, a collection $C\subseteq 2^{[n]}$ is called strongly (weakly)
separated if any two members of $C$ are strongly (resp. weakly) separated.
\medskip

(As is seen from a discussion in~\cite{LZ}, an interest in studying weakly
separated collections is inspired, in particular, by the problem of
characterizing all families of quasicommuting quantum flag minors, which in
turn comes from exploration of Lusztig's canonical bases for certain quantum
groups. It is proved in~\cite{LZ} that, in an $n\times n$ generic $q$-matrix,
the flag minors with column sets $I,J\subseteq[n]$ quasicommute if and only if
the sets $I,J$ are weakly separated. See also~\cite{L}.)

Important properties shown in~\cite{LZ} are that any weakly separated
collection $C\subseteq 2^{[n]}$ has cardinality at most $\binom{n+1}{2}+1$ and
that the set of such collections is closed under weak flips (which are defined
as for TP-bases above). Let $\Cscr_n$ denote the set of {\em largest} weakly
separated collections in $[n]$, i.e., having size $\binom{n+1}{2}+1$. It turns
into a poset by regarding $C$ as being less than $C'$ if $C$ is obtained from
$C'$ by a series of weak lowering flips. This poset contains $\Iscr_n$ and
co-$\Iscr$ as minimal and maximal elements, respectively, and it is conjectured
in~\cite[Conjecture~1.8]{LZ} that there are no other minimal and maximal
elements in it. This would imply that $\Cscr_n$ coincides with $\Bscr_n$. We
prove this conjecture.

The main results in this paper are summarized as follows.

 \medskip  \noindent {\bf Theorem A} (main)
{\em For $B\subseteq 2^{[n]}$, the following statements are equivalent:

{\rm (i)} $B$ is a semi-standard TP-basis;

{\rm (ii)} $B$ is the spectrum of a proper wiring;

{\rm (iii)} $B$ is the spectrum of a generalized tiling;

{\rm (iv)} $B$ is a largest weakly separated collection.}
  \medskip

The paper is organized as follows. Section~\ref{sec:defin} contains basic
definitions and states two results involved in Theorem~A. It introduces the
notions of proper wirings and generalized tilings, claims the equivalence of
(i) and (ii) in the above theorem (Theorem~\ref{tm:bas-wir}) and claims the
equivalence of (i) and (iii) (Theorem~\ref{tm:bas-til}).
Section~\ref{sec:Tprop} describes some ``elementary'' properties of g-tilings
that will be used later. The combined proof of Theorems~\ref{tm:bas-wir}
and~\ref{tm:bas-til} consists of four stages and is lasted throughout
Sections~\ref{sec:bas-to-til}--\ref{sec:wir-to-til}. In fact, g-tilings are the
central objects of treatment in the paper; we take advantages from their nice
graphical visualization and structural features, and all implications that we
explicitly prove involve just g-tilings. (Another preference of g-tilings is
that they admit ``local'' defining axioms; see Remark~1 in
Section~\ref{sec:Tprop}.) Implication (i)$\to$(iii) in Theorem~A is proved in
Section~\ref{sec:bas-to-til}, (iii)$\to$(i) in Section~\ref{sec:til-to-bas},
(iii)$\to$(ii) in Section~\ref{sec:til-to-wir}, and (ii)$\to$(iii) in
Section~\ref{sec:wir-to-til}. Section~\ref{sec:contr-exp} establishes important
interrelations between g-tilings in dimensions $n$ and $n-1$ (giving, as a
consequence, a relation between the classes $\Bscr_n$ and $\Bscr_{n-1}$). Here
we describe two operations, called the $n$-{\em contraction} and $n$-{\em
expansion}; the former canonically transforms a g-tiling for $n$ into one for
$n-1$, and the latter is applied to a pair consisting of a g-tiling for $n-1$
and a certain path in it and transforms this pair into a g-tiling for $n$.
These operations are essentially used in Section~\ref{sec:wsf} where we prove
(iv)$\to$(iii) by induction on $n$, thus answering Leclerc-Zelevinsky's
conjecture mentioned above. This completes the proof of Theorem~A, taking into
account that (i)$\to$(iv) was established in~\cite{LZ}. Section~\ref{sec:gen}
discusses two generalizations of our theorems: to an integer box and to an
arbitrary permutation of $[n]$. In Appendix we show that the equivalence
(i)$\Longleftrightarrow$(iv) as in Theorem~A is valid when, instead of TP-bases
and largest weakly separated collections in $2^{[n]}$, one considers a natural
class of bases of tropical Pl\"ucker functions on a hyper-simplex $\Delta_n^m$
and the weakly separated collections of maximum possible cardinality in
$\Delta_n^m$. \smallskip

{\em Acknowledgement.} We thank Alexander Postnikov who drew our attention to
paper~\cite{LZ} and a possible relation between TP-bases and weakly-separated
set-systems.


\section{\Large Wirings and tilings}  \label{sec:defin}

Throughout the paper we assume that $n>1$. This section gives precise
definitions of the objects that we call special wiring and generalized tiling
diagrams. Such diagrams live within a zonogon, which is defined as follows.

In the upper half-plane $\Rset\times \Rset_+$, take $n$ non-colinear vectors
$\xi_1,\ldots,\xi_n$ so that:
  \begin{numitem} (i) $\xi_1,\ldots,\xi_n$ follow in this order clockwise around
$(0,0)$, and (ii) all integer combinations of these vectors are different.
  \label{eq:xi}
  \end{numitem}
Then the set
  $$
Z=Z_n:=\{\lambda_1\xi_1+\ldots+ \lambda_n\xi_n\colon \lambda_i\in\Rset,\;
0\le\lambda_i\le 1,\; i=1,\ldots,n\}
  $$
is a $2n$-gone. Moreover, $Z$ is a {\em zonogon}, as it is the sum of $n$
line-segments $\{\lambda\xi_i\colon 1\le \lambda\le 1\}$, $i=1,\ldots,n$. Also
it is the image by a linear projection $\pi$ of the solid cube $conv(2^{[n]})$
into the plane $\Rset^2$, defined by $\pi(x)=x_1\xi_1+\ldots +x_n\xi_n.$ The
boundary $bd(Z)$ of $Z$ consists of two parts: the {\em left boundary} $lbd(Z)$
formed by the points (vertices) $p_i:=\xi_1+\ldots+\xi_i$ ($i=0,\ldots,n$)
connected by the line-segments $p_{i-1}p_i:=p_{i-1}+\{\lambda \xi_i\colon
0\le\lambda\le 1\}$, and the {\em right boundary} $rbd(Z)$ formed by the points
$p'_i:=\xi_i+\ldots+\xi_n$ ($i=0,\ldots,n$) connected by the segments
$p'_ip'_{i-1}$. So $p_0=p'_n$ is the minimal vertex and $p_n=p'_0$ is the
maximal vertex of $Z$. We orient each segment $p_{i-1}p_i$ from $p_{i-1}$ to
$p_i$ and orient each segment $p'_ip'_{i-1}$ from $p'_i$ to $p'_{i-1}$. Let
$s_i$ (resp. $s'_i$) denote the median point in the segment $p_{i-1}p_i$ (resp.
$p'_ip'_{i-1}$).

Although the generalized tiling model will be used much more extensively later
on, we prefer to start with describing the special wiring model, which looks
more transparent.

\subsection{Wiring diagrams}  \label{ssec:wiring}

A {\em special wiring diagram}, also called a {\em W-diagram} or a {\em wiring}
for brevity, is an ordered collection $W$ of $n$ wires $w_1,\ldots,w_n$
satisfying three axioms below. A {\em wire} $w_i$ is a continuous injective map
of the segment $[0,1]$ into $Z$ (or the curve in the plane represented by this
map) such that $w_i(0)=s_i$, $w_i(1)=s'_i$, and $w_i(\lambda)$ lies in the
interior of $Z$ for $0<\lambda<1$. We say that $w_i$ begins at $s_i$ and ends
at $s'_i$, and orient $w_i$ from $s_i$ to $s'_i$. The diagram $W$ is considered
up to a homeomorphism of $Z$ stable on $bd(Z)$, and up to parameterizations of
the wires. Axioms (W1)--(W3) specify $W$ as follows.
  \begin{itemize}
\item[(W1)] No three wires $w_i,w_j,w_k$ have a common point, i.e., there are no
$\lambda,\lambda',\lambda''$ such that
$w_i(\lambda)=w_j(\lambda')=w_k(\lambda'')$. Any two wires $w_i,w_j$ ($i\ne j$)
intersect at a finite number of points, and at each of their common points $v$,
the wires {\em cross\/}, not touch (i.e., when passing $v$, the wire $w_i$ goes
from one connected component of $Z- w_j$ to the other one).
  \end{itemize}
  \begin{itemize}
\item[(W2)] for $1\le i<j\le n$, the common points of $w_i,w_j$ follow in opposed orders
along these wires, i.e., if $w_i(\lambda_q)=w_j(\lambda'_{q})$ for
$q=1,\ldots,r$ and if $\lambda_1<\ldots<\lambda_r$, then
$\lambda'_1>\ldots>\lambda'_r$.
  \end{itemize}

Since the order of $s_i,s_j$ in $\ell bd(Z)$ is different from the order of
$s'_i,s'_j$ in $rbd(Z)$, wires $w_i,w_j$ do intersect; moreover, the number
$r=r_{ij}$ of their common points is odd. Assuming that $i<j$, we denote these
points as $x_{ij}(1),\ldots,x_{ij}(r)$ following the direction of $w_i$ from
$w_i(0)$ to $w_i(1)$. When $r>1$, the (bounded) region in the plane surrounded
by the pieces of $w_i,w_j$ between $x_{ij}(q)$ and $x_{ij}(q+1)$ (where
$q=1,\ldots,r-1$) is denoted by $L_{ij}(q)$ and called the $q$-th {\em lens}
for $i,j$. The points $x_{ij}(q)$ and $x_{ij}(q+1)$ are regarded as the {\em
upper} and {\em lower} points of $L_{ij}(q)$, respectively. When $q$ is odd
(even), we say that $L_{ij}(q)$ is an {\em odd} (resp. {\em even}) lens. Note
that at each point $x_{ij}(q)$ with $q$ odd the wire with the bigger number,
namely, $w_j$, crosses the wire with the smaller number ($w_i$) {\em from left
to right} w.r.t. the direction of the latter; we call such a point {\em white}.
In contrast, when $q$ is even, $w_j$ crosses $w_i$ at $x_{ij}(q)$ {\em from
right to left}; in this case (which will be of especial interest), we call
$x_{ij}(q)$ {\em black}, or {\em orientation-reversing}, and say that this
point is the {\em root} of the lenses $L_{ij}(q-1)$ and $L_{ij}(q)$. In the
simplest case, when any two distinct wires intersect exactly once, there are no
lenses at all and all intersection points for $W$ are white. (The adjectives
``white'' and ``black'' for intersection points of wires will match terminology
that we use for corresponding elements of tilings.)

The wiring $W$ is associated, in a natural way, with a planar directed graph
$G_W$ embedded in $Z$. The vertices of $G_W$ are the points $p_i,p'_i,s_i,s'_i$
and the intersection points of wires. The edges of $G_W$ are the corresponding
directed line-segments in $bd(Z)$ and the pieces of wires between neighboring
points of intersection with other wires or with the boundary, which are
directed according to the direction of wires. We say that an edge contained in
a wire $w_i$ has {\em color} $i$, or is an $i$-{\em edge}. Let $\Fscr_W$ be the
set of (inner, or bounded) faces of $G_W$. Here each face $F$ is considered as
the closure of a maximal connected component in $Z-\cup(w\in W)$. We say that a
face $F$ is {\em cyclic} if its boundary $bd(F)$ is a directed cycle in $G_W$.
  \begin{itemize}
\item[(W3)] There is a bijection $\phi$ between the set $\Lscr(W)$ of
lenses in $W$ and the set $\Fscr^{cyc}_W$ of cyclic faces in $G_W$. Moreover,
for each lens $L$, $\phi(L)$ is the (unique) face lying in $L$ and containing
its root.
  \end{itemize}

We say that $W$ is {\em proper} if none of cyclic faces is a whole lens, i.e.,
for each lens $L\in\Lscr(W)$, there is at least one wire going across $L$. An
instance of proper wirings for $n=4$ is illustrated in the picture; here the
cyclic faces are marked by circles and the black rhombus indicates the black
point.

 \begin{center}
  \unitlength=1mm
  \begin{picture}(150,40)
   \put(30,0){\begin{picture}(45,13)
  \put(0,0){\vector(-3,1){29.5}}
  \put(0,0){\vector(-1,2){4.7}}
  \put(0,0){\vector(1,2){4.7}}
  \put(0,0){\vector(3,2){14.5}}
  \put(-30,5){$\xi_1$}
  \put(-9,8){$\xi_2$}
  \put(6,8){$\xi_3$}
  \put(13,5){$\xi_4$}
    \end{picture}}
%
   \put(85,0){\begin{picture}(50,40)
  \put(0,0){\line(-3,1){30}}
  \put(-30,10){\line(-1,2){5}}
  \put(-35,20){\line(1,2){5}}
  \put(-30,30){\line(3,2){15}}
  \put(15,30){\line(-3,1){30}}
  \put(20,20){\line(-1,2){5}}
  \put(15,10){\line(1,2){5}}
  \put(0,0){\line(3,2){15}}
  \put(-15,5){\circle*{1}}
  \put(7.5,5){\circle*{1}}
  \put(-32.5,15){\circle*{1}}
  \put(17.5,15){\circle*{1}}
  \put(-32.5,25){\circle*{1}}
  \put(17.5,25){\circle*{1}}
  \put(-22.5,35){\circle*{1}}
  \put(0,35){\circle*{1}}
  \put(-15,5){\line(3,1){9}}
  \put(-6,8){\vector(4,-1){13}}
  \put(-9.5,32){\vector(3,1){9}}
  \put(-22.5,35){\line(4,-1){13}}
  \put(-32.5,15){\line(1,0){35}}
  \put(-32.5,25){\line(1,0){35}}
  \put(2.5,15){\vector(3,2){14.5}}
  \put(2.5,25){\vector(3,-2){14.5}}
  \qbezier(-6,8)(10,13)(-7.5,20)
  \qbezier(-6,8)(-22,15)(-7.5,20)
  \qbezier(-9,32)(-25,27)(-7.5,20)
  \qbezier(-9.5,32)(7,25)(-7.5,20)
  \put(-8.5,18.5){$\blacklozenge$}
  \put(-6,17){\circle{2}}
  \put(-9,23){\circle{2}}
  \put(-3,3){$\emptyset$}
  \put(-22,10){1}
  \put(7,11){4}
  \put(-8,10){14}
  \put(-27,19){12}
  \put(1,19){24}
  \put(14,19){34}
  \put(-24,28){123}
  \put(-11,27){23}
  \put(2,28){234}
  \put(-17,35){1234}
  \put(-17,2){$s_1$}
  \put(-36,13){$s_2$}
  \put(-36,26){$s_3$}
  \put(-26,36){$s_4$}
  \put(1,7){$w_4$}
  \put(9.5,16){$w_3$}
  \put(9,23){$w_2$}
  \put(-4.5,31.5){$w_1$}
  \put(1,36){$s'_1$}
  \put(19,25){$s'_2$}
  \put(19,13){$s'_3$}
  \put(8.5,2){$s'_4$}
    \end{picture}}
%
   \put(110,0){\begin{picture}(30,30)
  \put(0,15){$B_W=\{\emptyset,1,4,12,$}
  \put(15,8){$14,23,24,34,$}
  \put(15,1){$123,234,1234\}$}
    \end{picture}}
  \end{picture}
   \end{center}

Now we associate to $W$ a set-system $B_W\subseteq 2^{[n]}$ as follows. For each
face $F$, let $X(F)$ be the set of elements $i\in[n]$ such that $F$ lies {\em on
the left} from the wire $w_i$, i.e., $F$ and the maximal point $p_n$ lie in the
same of the two connected components of $Z- w_i$. We define
  $$
  B_W:=\{X\subseteq [n] \colon X=X(F)\;\; \mbox{for some}\;\;
  F\in\Fscr_W-\Fscr^{cyc}_W\},
  $$
referring to it as the {\em effective spectrum}, or simply the {\em spectrum}
of $W$. Sometimes it will also be useful to consider the {\em full spectrum}
$\hat B_W$ consisting of all sets $X(F)$, $F\in\Fscr_W$. (In fact, when $W$ is
proper, all sets in $\hat B_W$ are different; see Lemma~\ref{lm:nocopies}. When
$W$ is not proper, there are different faces $F,F'$ with $X(F)=X(F')$. We can
turn $W$ into a proper wiring $W'$ by getting rid, step by step, of lenses
forming faces (by making a series of Reidemeister moves of type II, namely,
\intlens$\to$\apartlens operations). This preserves the effective spectrum:
$B_{W'}=B_W$, whereas the full spectrum may decrease.)

Note that when any two wires intersect at exactly one point (i.e., when no
black points exist), $B_W$ is a normal basis, and conversely, any normal basis
is obtained in this way (see~\cite{DKK-08}).

Our main result on wirings is the following
 \begin{theorem} \label{tm:bas-wir}
For any proper wiring $W$ (obeying (W1)--(W3)), the spectrum $B_W$ is a
semi-normal TP-basis. Conversely, for any semi-normal TP-basis $B$, there
exists a proper wiring $W$ such that $B_W=B$.
 \end{theorem}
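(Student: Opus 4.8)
The plan is to prove both directions at once by passing through generalized tilings, exploiting the planar duality between a wiring and a g-tiling; this reduces the claim to the tiling analogue, i.e.\ to the equivalence of (i) and (iii) in Theorem~A (Theorem~\ref{tm:bas-til}). The bridge I need is the pair of dualizations (ii)$\leftrightarrow$(iii). To a proper wiring $W$ I would associate the planar dual of the induced graph $G_W$: each non-cyclic face $F$ of $G_W$ becomes a vertex of the g-tiling $T$, placed at $\sum_{i\in X(F)}\xi_i$, so that the vertex set of $T$ (with these labels) is exactly $B_W$; each crossing of two wires $w_i,w_j$ becomes a tile with sides parallel to $\xi_i,\xi_j$; and an edge of color $i$ in $T$ crosses an $i$-edge of $W$. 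Conversely, from a g-tiling $T$ I would reconstruct a wiring by letting $w_i$ be the dual transversal of color $i$, threading through the tiles across their $\xi_i$-sides; the faces of $G_W$ then recover the labelled vertices of $T$.

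For the forward implication (proper wiring $\Rightarrow$ semi-normal basis), I would first check that the dual object $T$ satisfies the defining axioms of a g-tiling, and then invoke (iii)$\to$(i) to conclude that its spectrum $B_W$ is a semi-normal TP-basis. The delicate point is the interplay of lenses and cyclic faces. Axiom~(W3) supplies a bijection between the lenses of $W$ and the cyclic faces of $G_W$, each cyclic face lying inside its lens and containing the lens root, and properness guarantees no lens is itself a single cyclic face; I would use exactly this to show that under dualization the cyclic faces collapse consistently (they contribute no vertices to $T$), while the black, orientation-reversing crossings become the roots that produce the non-rhombic features of $T$. Since the vertices of $T$ are, by construction, in label-preserving bijection with the non-cyclic faces, the spectrum of $T$ equals $B_W$.

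For the converse, given a semi-normal TP-basis $B$, I would apply (i)$\to$(iii) to realize $B$ as the spectrum of some g-tiling $T$ and then dualize $T$ to a wiring $W$ as above, so that $B_W=B$. The task is to verify (W1)--(W3) for this $W$: axiom~(W1) is a transversality condition attainable after a small deformation; (W2) (wires of colors $i<j$ meet in opposed order along their lengths) should follow from the monotone way the $\xi_i$- and $\xi_j$-colored edges are swept across $T$; and (W3), together with properness, should be read off from the local tile structure around the black vertices of $T$.

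The step I expect to be the main obstacle is precisely axiom~(W3) with properness: establishing the exact lens--cyclic-face bijection, and correctly accounting for the black points as roots of lenses, is the crux of both directions, since this is where the passage from ordinary pseudoline arrangements and rhombus tilings to the more intricate wiring/g-tiling correspondence is forced. The base case is already in hand (as noted after the definition of $B_W$): when no black points occur, every pair of wires crosses once, $G_W$ has no cyclic faces, and the dual is a genuine rhombus tiling whose spectrum is a normal basis. I would therefore organize the argument so that the general case is obtained by analyzing, lens by lens, how each root modifies this picture locally while leaving the spectrum unchanged.
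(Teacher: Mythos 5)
Your overall strategy coincides with the paper's: Theorem~\ref{tm:bas-wir} is deduced from Theorem~\ref{tm:bas-til} by establishing the two duality constructions (Propositions~\ref{pr:til-wir} and~\ref{pr:wir-til}), with wires corresponding to $i$-strips and wire crossings to tiles. However, one step of your dualization, as stated, would fail. You assert that the cyclic faces of $G_W$ ``collapse consistently (they contribute no vertices to $T$).'' In the correct construction \emph{every} face $F\in\Fscr_W$ contributes the vertex $X(F)$; the cyclic faces contribute precisely the \emph{terminal} vertices of $T$, i.e.\ the top and bottom vertices of the black tiles. A black crossing of $w_i,w_j$ dualizes to a black $ij$-tile, whose four corners $X,Xi,Xj,Xij$ must all be present in $T$, and whose top and bottom corners are exactly the images of cyclic faces (this is where (W3) and the lens--root structure enter). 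These vertices are excluded from the \emph{effective} spectrum $B_T$, which is why $B_T=B_W$, but they cannot be omitted from $T$ itself: without them the black tiles have no bottom/top vertices, and axioms (T1)--(T3) cannot even be checked. So the bijection you want is between non-cyclic faces and \emph{non-terminal} vertices, inside a larger correspondence between all faces and all vertices (the full spectra $\hat B_W=\hat B_T$).

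Beyond that, your plan leaves unidentified the lemma that carries most of the weight in the wiring-to-tiling direction: for a proper wiring, all labels $X(F)$, $F\in\Fscr_W$, are distinct and no face has two edges of the same color (Lemma~\ref{lm:nocopies}). This is what yields the absence of tile copies in (T1) and the disc property (T4), and its proof is a genuine induction on $n$ (remove $w_n$, show the truncated collection is again a proper wiring, then derive a contradiction from a repeated label using (W2) and the lens structure). Symmetrically, in the tiling-to-wiring direction the verification of (W2) is not the ``monotone sweep'' you suggest: two strips of colors $i<j$ can meet many times, and ruling out a second crossing of the same orientation requires a counting argument on maximal alternating subpaths of the strip boundaries, using that $G_T$ is graded. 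Your outline correctly flags (W3) and properness as delicate, but these two arguments are the ones that must be supplied before the plan closes.
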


This theorem will be obtained in
Sections~\ref{sec:til-to-wir}--\ref{sec:wir-to-til}.

\subsection{Generalized tilings}  \label{ssec:tiling}

When it is not confusing, we identify a subset $X\subseteq [n]$ with the
corresponding vertex of the $n$-cube and with the point $\sum_{i\in X}\xi_i$ in
the zonogon $Z$. Due to~\refeq{xi}(ii), all points $X$ are different
(concerning $Z$).

Assuming that the vectors $\xi_i$ have the same Euclidean norm, a {\em rhombus
tiling diagram} is defined to be a subdivision $T$ of $Z$ into rhombi of the
form $x+\{\lambda\xi_i+\lambda'\xi_j\colon 0\le \lambda,\lambda'\le 1\}$ for
some $i<j$ and a point $x$ in $Z$, i.e., the rhombi are pairwise
non-overlapping (have no common interior points) and their union is $Z$.
From~\refeq{xi}(ii) it follows that for $i,j,x$ as above, $x$ represents a
subset in $[n]-\{i,j\}$. The diagram $T$ is also regarded as a directed planar
graph whose vertices and edges are the vertices and side segments of the
rhombi, respectively. An edge connecting $X$ and $Xi$ is directed from the
former to the latter. It is shown in~\cite{DKK-08,HS} that the vertex set of
$T$ forms a normal basis and that each normal basis is obtained in this way.

It makes no difference whether we take vectors $\xi_1,\ldots,\xi_n$ with equal
or arbitrary norms (subject to~\refeq{xi}); to simplify technical details and
visualization, throughout the paper we will assume that these vectors have {\em
unit height}, i.e., each $\xi_i$ is of the form $(x,1)$. Then we obtain a
subdivision $T$ of $Z$ into parallelograms of height 2, and for convenience we
refer to $T$ as a {\em tiling} and to its elements as {\em tiles}. A tile
$\tau$ defined by $X,i,j$ (with $i<j$) is called an $ij$-{\em tile} at $X$ and
denoted by $\tau(X;i,j)$. According to a natural visualization of $\tau$, its
vertices $X,Xi,Xj,Xij$ are called the {\em bottom, left, right, top} vertices
of $\tau$ and denoted by $b(\tau)$, $\ell(\tau)$, $r(\tau)$, $t(\tau)$,
respectively. Also we will say that: for a point (subset) $Y\subseteq[n]$,
~$|Y|$ is the {\em height} of $Y$; the set of vertices of tiles in $T$ having
height $h$ form $h$-th {\em level}; and a point $Y$ {\em lies on the right}
from a point $Y'$ if $Y,Y'$ have the same height and $\sum_{i\in Y}\xi_i\ge
\sum_{i\in Y'}\xi_i$.

In a {\em generalized tiling}, or a {\em g-tiling}, the union of tiles is again
$Z$ but some tiles may overlap. It is a collection $T$ of tiles which is
partitioned into two subcollections $T^w$ and $T^b$, of {\em white} and {\em
black\/} tiles (say), respectively, obeying axioms (T1)--(T4) below. When
$T^b=\emptyset$, we will obtain a tiling as before, for convenience referring
to it as a {\em pure tiling}. Let $V_T$ and $E_T$ denote the sets of vertices
and edges, respectively, occurring in tiles of $T$, not counting
multiplicities. For a vertex $v\in V_T$, the set of edges incident with $v$ is
denoted by $E_T(v)$, and the set of tiles having a vertex at $v$ is denoted by
$F_T(v)$.
  \begin{itemize}
\item[(T1)] All tiles are contained in $Z$. Each boundary edge of $Z$ belongs to
exactly one tile. Each edge in $E_T$ not contained in $bd(Z)$ belongs to
exactly two tiles. All tiles in $T$ are different (in the sense that no two
coincide in the plane).
  \end{itemize}
  \begin{itemize}
\item[(T2)] Any two white tiles having a common edge do not overlap (in the
sense that they have no common interior point). If a white tile and a black
tile share an edge, then these tiles do overlap. No two black tiles share an
edge.
  \end{itemize}
  \begin{itemize}
\item[(T3)] Let $\tau$ be a black tile. None of $b(\tau),t(\tau)$ is a vertex of
another black tile. All edges in $E_T(b(\tau))$ {\em leave} $b(\tau)$ (i.e.,
are directed from $b(\tau)$). All edges in $E_T(t(\tau))$ {\em enter} $t(\tau)$
(i.e., are directed to $t(\tau)$).
  \end{itemize}

We distinguish between three sorts of vertices by saying that $v\in V_T$ is:
(a) a {\em terminal} vertex if it is the bottom or top vertex of some black
tile; (b) an {\em ordinary} vertex if all tiles in $F_T(v)$ are white; and (c)
a {\em mixed} vertex otherwise (i.e. $v$ is the left or right vertex of some
black tile). Note that a mixed vertex may belong, as the left or right vertex,
to several black tiles.

Each tile $\tau\in T$ is associated, in a natural way, to a square in the solid
$n$-cube $conv(2^{[n]})$, denoted by $\sigma(\tau)$: if $\tau=\tau(X;i,j)$ then
$\sigma(\tau)$ spans the vertices (corresponding to) $X,Xi,Xj,Xij$ in the cube.
In view of (T1), the interiors of these squares are disjoint, and
$\cup(\sigma(\tau)\colon \tau\in T)$ forms a 2-dimensional surface, denoted by
$D_T$, whose boundary is the preimage by $\pi$ of the boundary of $Z$; the
vertices in $bd(D_T)$ correspond to the {\em principal intervals} $\emptyset$,
$[q]$ and $[q..n]$ for $q=1,\ldots,n$. (For $1\le p\le r\le n$, we denote the
interval $\{p,p+1,\ldots,r\}$ by ~$[p..r]$). The last axiom is:
  \begin{itemize}
\item[(T4)] $D_T$ is a disc (i.e., is homeomorphic to
 $\{x\in\Rset^2\colon x_1^2+x_2^2\le 1\}$).
  \end{itemize}

The {\em reversed} g-tiling $T^{rev}$ of a g-tiling $T$ is formed by
replacing each tile $\tau(X;i,j)$ of $T$ by the tile $\tau([n]-Xij;i,j)$
(or by changing the orientation of all edges in $E_T$, in particular, in
$bd(Z)$). Clearly (T1)--(T4) remain valid for $T^{rev}$.

The {\em effective spectrum}, or simply the {\em spectrum}, of a g-tiling $T$
is the collection $B_T$ of (subsets of $[n]$ represented by) {\em non-terminal}
vertices in $T$. The {\em full spectrum} $\hat B_T$ is formed by all vertices
in $T$. An example of g-tilings for $n=4$ is drawn in the picture, where the
unique black tile is indicated by thick lines and the terminal vertices are
surrounded by circles (this is related to the wiring shown on the previous
picture).

 \begin{center}
  \unitlength=1mm
  \begin{picture}(130,40)
   \put(40,0){\begin{picture}(50,40)
  \put(0,0){\vector(-3,1){29.5}}
  \put(-30,10){\vector(-1,2){4.7}}
  \put(-35,20){\vector(1,2){4.7}}
  \put(-30,30){\vector(3,2){14.7}}
  \put(15,30){\vector(-3,1){29.5}}
  \put(20,20){\vector(-1,2){4.7}}
  \put(15,10){\vector(1,2){4.7}}
  \put(0,0){\vector(3,2){14.7}}
  \put(0,0){\circle*{1.5}}
  \put(-30,10){\circle*{1.5}}
  \put(-5,10){\circle{3}}
  \put(15,10){\circle*{1.5}}
  \put(-35,20){\circle*{1.5}}
  \put(-15,20){\circle*{1.5}}
  \put(0,20){\circle*{1.5}}
  \put(10,20){\circle*{1.5}}
  \put(20,20){\circle*{1.5}}
  \put(-30,30){\circle*{1.5}}
  \put(-20,30){\circle{3}}
  \put(15,30){\circle*{1.5}}
  \put(-15,40){\circle*{1.5}}
  \put(0,20){\vector(-3,1){29.5}}
  \put(15,10){\vector(-3,1){29.5}}
  \put(-15,20){\vector(-1,2){4.7}}
  \put(15,10){\vector(-1,2){4.7}}
  \put(-5,10){\vector(1,2){4.7}}
  \put(10,20){\vector(1,2){4.7}}
  \put(-30,10){\vector(3,2){14.7}}
  \put(0,20){\vector(3,2){14.7}}
\thicklines{
  \put(-5,10){\vector(-3,1){29.5}}
  \put(10,20){\vector(-3,1){29.5}}
  \put(-35,20){\vector(3,2){14.7}}
  \put(-5,10){\vector(3,2){14.7}}
}
  \put(-1,2){$\emptyset$}
  \put(-33,8){1}
  \put(17,8){4}
  \put(-40,18){12}
  \put(-21,20){14}
  \put(-6,17.5){23}
  \put(12,19){24}
  \put(22,18){34}
  \put(-37,30){123}
  \put(17,30){234}
  \put(-18,35){1234}
    \end{picture}}
%
   \put(80,0){\begin{picture}(50,30)
  \put(0,25){$B_T=\{\emptyset,1,4,12,14,23,24,34,$}
  \put(15,18){$123,234,1234\}$}
    \end{picture}}
  \end{picture}
   \end{center}
Our main result on g-tilings is the following
 \begin{theorem} \label{tm:bas-til}
For any generalized tiling $T$ (obeying (T1)--(T4)), the spectrum $B_T$ is a
semi-normal TP-basis. Conversely, for any semi-normal TP-basis $B$, there
exists a generalized tiling $T$ such that $B_T=B$.
 \end{theorem}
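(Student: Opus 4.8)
The plan is to prove the two assertions---call them the ``tiling$\to$basis'' and ``basis$\to$tiling'' directions---by setting up a dictionary between a single flip $Y\rightsquigarrow Y'$ on a basis and a local retiling of a bounded region of $Z$, and then running an induction in each direction. The common anchor is the pure case: by \cite{DKK-08,HS} the pure tilings (those with $T^b=\emptyset$) correspond bijectively to the normal bases, with $\Iscr_n$ realized by the standard rhombus tiling; since normal bases are semi-normal by definition, this supplies both the base of each induction and one endpoint of the dictionary. The geometry of a flip is transparent here. The sets $X,Xi,Xk,Xij,Xjk,Xijk$ that serve as the ``six witnesses'' of a strong flip are exactly the six boundary vertices of the hexagon $\pi(\mathrm{conv}\{X,\ldots,Xijk\})$, the projection of a $3$-subcube, while $Xj$ (height $|X|+1$) and $Xik$ (height $|X|+2$) are its two interior candidates. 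A strong flip is then the classical hexagon (rhombus) flip switching the unique interior vertex between $Xj$ and $Xik$, which matches the exchange $Xj\leftrightarrow Xik$ on the spectrum.

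For the basis$\to$tiling direction I would induct on the length of a shortest flip-sequence realizing $B$ from $\Iscr_n$. Given a g-tiling $T$ with $B_T$ equal to the predecessor basis and a flip $B_T\to B$ with cortege $(X,i,j,k)$, the task is to locate the corresponding region of $Z$ and retile it so that the new diagram $T'$ has $B_{T'}=B$. When the flip is strong, all six boundary vertices lie in the spectrum and the local picture is the pure hexagon, so the move is the hexagon flip. The genuinely new case is a weak flip, where at least one of the outer sets $X,Xijk$ is absent from the spectrum and is therefore a terminal vertex, i.e.\ the bottom or top of a black tile; here the retiling must create or absorb a black tile, and axiom (T3) (all edges leaving a bottom, entering a top) dictates the admissible local configuration. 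In every case one must then verify that (T1)--(T4) persist. I would extract the local structure of $T$ around the move from the elementary properties of Section~\ref{sec:Tprop}, and use the rigidity of g-tilings (each is determined by its spectrum) to guarantee that the located configuration is the expected one, so that both the move and its effect on the spectrum are well defined.

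For the tiling$\to$basis direction I would argue by reducing an arbitrary g-tiling to a pure one. Viewing $D_T$ as a surface spanning the two monotone boundary paths inside the solid cube, the standard tiling is the lowest such surface, and I expect to show that every non-pure $T$ admits a \emph{lowering} move: a local retiling that strictly decreases a complexity functional (e.g.\ the number of black tiles, or the combinatorial area of $D_T$ above the standard surface) and that enacts a lowering flip $ik\rightsquigarrow j$ on the spectrum. The point to check is that the presence of a black tile forces a local configuration in which the four witness sets $Xi,Xij,Xk,Xjk$ of \refeq{P3} all lie in $B_T$, so the move is a legitimate weak flip. Iterating drives $T$ down to a pure tiling, whose spectrum is a normal basis. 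Reversing this chain exhibits $B_T$ as obtained from $\Iscr_n$ by a sequence of (raising) flips, each with its witnesses present; since every flip preserves the property of being a basis, $B_T$ is a TP-basis, and it is semi-normal by construction.

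I expect the main obstacle to be the weak-flip analysis together with the preservation of the global disc axiom (T4). Unlike (T1)--(T3), which are local, (T4) is topological and can fail under an otherwise plausible local move, so each retiling---especially those creating or destroying black tiles near terminal vertices---must be checked to keep $D_T$ embedded and simply connected; ruling out a ``whole-lens'' cyclic configuration (the impropriety forbidden on the wiring side) is the crux. A secondary difficulty is the existence half of the reduction: proving that a lowering move is always available when $T^b\neq\emptyset$, which amounts to locating an extremal black tile whose removal is compatible with (T3) and (T4). This is where the structural lemmas of Section~\ref{sec:Tprop} are expected to do the real work.
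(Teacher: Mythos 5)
Your high-level plan coincides with the paper's: the basis$\to$tiling direction is run by lifting a flip sequence from $\Iscr_n$ to a sequence of local retilings starting from the standard pure tiling (Proposition~\ref{pr:Tflip}), and the tiling$\to$basis direction is run by showing that any g-tiling with a black tile admits a lowering flip and inducting on the height-sum $\sum_{X\in B_T}|X|$ (Proposition~\ref{pr:downflip}). However, the proposal leaves the actual mathematical content of both halves unproved, and two of the mechanisms you do specify would not work. The crux of the tiling$\to$basis direction is precisely the claim you defer ("the presence of a black tile forces a local configuration in which the four witness sets all lie in $B_T$"): the paper's Proposition~\ref{pr:downflip} proves this by assuming every vertex of $M(\tau)$ is mixed for every black tile $\tau$ of the given height, constructing from this an alternating cycle of white and black edges through the tops of black tiles, and contradicting the gradedness of $G_T$ via a monotonicity argument on edge colors. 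Nothing in the elementary properties of Section~\ref{sec:Tprop} yields this for free. Also, your first candidate complexity functional, the number of black tiles, is not monotone under lowering flips: Case~1(b) in the proof of Proposition~\ref{pr:Tflip} \emph{creates} a black tile. Only the height-sum (equivalently your ``area'' variant) decreases.

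In the basis$\to$tiling direction you appeal to ``the rigidity of g-tilings (each is determined by its spectrum)'' to identify the local configuration to be retiled; in the paper that rigidity is Theorem~\ref{tm:unique}, which is proved \emph{from} the flip machinery of Propositions~\ref{pr:Tflip} and~\ref{pr:downflip}, so invoking it here is circular. What actually makes the local move well defined are the existence statements~\refeq{edge} (any two non-terminal vertices $X'$, $X'i'$ are joined by an edge) and~\refeq{2-tiles} (the two white tiles $\tau(Xi;j,k)$ and $\tau(Xk;i,j)$ are present), proved by strip-boundary counting in the graded graph and by induction on $n$ via the $n$-contraction. Likewise, preservation of the global axiom (T4) is not handled by a direct topological check but by the non-existence statements~\refeq{nonexist} (the vertex $Xj$, resp.\ $X$, created by the flip is genuinely new), again proved via $n$-contraction. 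These strip and contraction arguments are the real work of Sections~\ref{sec:bas-to-til}--\ref{sec:til-to-bas}, and your plan names the difficulties in the right places but supplies no route to them.
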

(In particular, the cardinalities of the spectra of all g-tilings on $Z_n$ are
the same and equal to $\binom{n+1}{2}+1$.) The first part of this theorem will
be proved in Section~\ref{sec:til-to-bas}, and the second one in
Section~\ref{sec:bas-to-til}.

We will explain in Section~\ref{sec:wir-to-til} that for each semi-normal basis
$B$, there are precisely one proper wiring $W$ and precisely one g-tiling $T$
such that $B_W=B_T=B$ (see Theorem~\ref{tm:unique}); this is similar to the
one-to-one correspondence between the normal bases and pure tilings.

\section{Elementary properties of generalized tilings}  \label{sec:Tprop}

In this section we give additional definitions and notation and demonstrate
several corollaries from axioms~(T1)--(T4) which will be used later on. Let $T$
be a g-tiling on $Z=Z_n$.
\smallskip

{\bf 1.} An edge $e$ of $G_T$ is called {\em black} if there is a black tile
containing $e$ (as a side edge); otherwise $e$ is called {\em white}. The sets
of white and black edges incident with a vertex $v$ are denoted by $E_T^w(v)$
and $E_T^b(v)$, respectively. For a vertex $v$ of a tile $\tau$, let
$C(\tau,v)$ denote the minimal cone at $v$ containing $\tau$ (i.e., generated
by the pair of edges of $\tau$ incident to $v$), and let $\alpha(\tau,v)$
denote the angle of this cone taken with sign $+$ if $\tau$ is white, and $-$
if $\tau$ is black. The sum $\sum(\alpha(\tau,v)\colon \tau\in F_T(v))$ is
called the {\em (full) rotation angle} at $v$ and denoted by
$\rho(v)=\rho_T(v)$. We observe from~(T1)--(T3) that terminal vertices behave
as follows.
  \begin{corollary} \label{cor:termv}
Let $v$ be a terminal vertex belonging to a black ~$ij$-tile $\tau$. Then:

{\rm(i)} $v$ is not connected by edge with another terminal vertex (whence
$|E^b_T(v)|=2$);

{\rm(ii)} $|E_T(v)|\ge 3$ (whence $E^w_T(v)\ne\emptyset$);

{\rm(iii)} each edge $e\in E^w_T(v)$ lies in the cone $C(\tau,v)$ (whence $e$
is a $q$-edge for some $i<q<j$);

{\rm(iv)} $\rho(v)=0$;

{\rm(v)} $v$ does not belong to the boundary of $Z$ (whence any tile containing
a boundary edge of $Z$ is white).
  \end{corollary}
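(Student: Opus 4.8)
The plan is to reduce everything to a single local picture of the tiles around $v$ and then read off all five claims from it. By the reversal symmetry $T\mapsto T^{rev}$, which interchanges the roles of bottom and top vertices while keeping the white/black partition, it suffices to treat the case $v=b(\tau)$ with $\tau=\tau(X;i,j)$; here $v=X$. The starting observation is that, by (T3), every edge in $E_T(v)$ leaves $v$, so $v$ is a local source; consequently each tile in $F_T(v)$ has $v$ as its \emph{bottom} vertex, i.e.\ is of the form $\tau(X;a,b)$, and its cone opens upward. This already yields uniqueness of the black tile at $v$: a second black tile at $v$ would again have $v$ as its bottom, making $b(\tau)=v$ a vertex of another black tile and contradicting (T3). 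Hence the only black edges at $v$ are the two edges $X\to Xi$ and $X\to Xj$ of $\tau$, giving $|E^b_T(v)|=2$ as in (i).

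First I would establish the \emph{fan structure} around $v$, which is the heart of the argument. Write $e_i=(X\to Xi)$ and $e_j=(X\to Xj)$ for the two edges of $\tau$ at $v$. By (T1) each of them lies in a second tile, which by (T2) (a white tile sharing an edge with a black one overlaps it) must lie on the \emph{same} side as $\tau$, i.e.\ inside the cone $C(\tau,v)$; this forces $e_i$ to be the extreme counterclockwise edge and $e_j$ the extreme clockwise edge at $v$, so that every edge of $E_T(v)$ lies in $C(\tau,v)$. This is (iii), and in particular each white edge is a $q$-edge with $i<q<j$. Walking from $e_i$ toward $e_j$, the right edge of each white tile is interior and, not being an edge of $\tau$ (whose only edges at $v$ are $e_i,e_j$) nor of a second black tile, is shared with the next white tile; thus the white tiles form a fan $\tau(X;c_s,c_{s+1})$ with $i=c_0<c_1<\dots<c_m=j$ that tiles $C(\tau,v)$ exactly once. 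A white tile spanning several of these sectors is impossible, since it would overlap a fan tile with which it shares a bottom edge, violating (T2). Finally $m\ge 2$, for otherwise the white neighbour of $e_i$ would be $\tau(X;i,j)$ itself, coinciding with the black tile and violating (T1); this produces a white edge strictly inside the cone, giving $E^w_T(v)\ne\emptyset$ and $|E_T(v)|\ge 3$, i.e.\ (ii).

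With the fan in hand, the remaining claims fall out. For (iv), the angles $\alpha(\tau(X;c_s,c_{s+1}),v)$ telescope to the full cone angle from $\xi_i$ to $\xi_j$, which equals $-\alpha(\tau,v)$, so $\rho(v)=0$. For the rest of (i), an edge joining $v$ to a terminal vertex $u$ leaves $v$, whence $u=Xc$ with $i\le c\le j$: if $c\in\{i,j\}$ then $u$ is a vertex of the black tile $\tau$ and so cannot be the bottom or top of a black tile (by (T3)); if $i<c<j$ then $u=b(\tau')$ is excluded by the incoming edge $v\to u$, while $u=t(\tau')$ is excluded because the fan tile having $u$ as its right vertex sends an edge \emph{out} of $u$, contradicting (T3). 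Thus $u$ is never terminal. For (v), both $e_i$ and $e_j$ lie in two tiles (namely $\tau$ and its white fan neighbour), so they are interior edges; but at a boundary vertex the two extreme edges are precisely the boundary edges of $Z$, and we have just shown the extreme edges $e_i,e_j$ to be interior, a contradiction, so $v\notin bd(Z)$. Since every side of a tile is incident to its bottom or top vertex, a black tile carrying a boundary edge would force a terminal vertex onto $bd(Z)$, which is now impossible; hence every tile with a boundary edge is white.

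I expect the fan structure of the second paragraph to be the main obstacle: the delicate point is to combine the overlap dichotomy of (T2) with the source condition of (T3) so as to pin down simultaneously that $e_i,e_j$ are the extreme edges at $v$ and that the white tiles cover $C(\tau,v)$ simply, with no stray overlapping white tile. Everything else is bookkeeping once this local model is secured, and the whole argument uses only (T1)--(T3), not the global disc axiom (T4).
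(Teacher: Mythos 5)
Your overall route is the same as the paper's: reduce to $v=b(\tau)$, observe via (T3) that every tile in $F_T(v)$ has $v$ as its bottom vertex, use (T2) to place the white neighbours of the two black edges inside $C(\tau,v)$, and build the simple white fan filling the cone, from which (ii)--(iv) drop out. Your argument for (i) is a more roundabout case analysis than the paper's (which just notes that every edge has an endpoint that is the left or right vertex of a tile containing it, hence has both an entering and a leaving edge and cannot be terminal), but it is valid. One shared point of informality: the claim that the overlapping neighbours of $e_i,e_j$ ``force'' these to be the extreme edges at $v$ is asserted rather than proved --- one must still rule out a second, closed fan of white tiles at $v$ lying outside $C(\tau,v)$ (impossible because all tiles at $v$ open upward, so the edge directions along such a cycle would have strictly decreasing argument and could not close up). The paper is equally terse here, so I only flag it.

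The genuine gap is in (v), for the two extreme vertices $p_0$ and $p_n$. Your argument is: $e_i$ and $e_j$ lie in two tiles each (namely $\tau$ and a white fan neighbour), hence are interior edges, hence $v$ is not a boundary vertex. But the existence of those fan neighbours was obtained from the clause of (T1) asserting that an edge \emph{not contained in} $bd(Z)$ lies in exactly two tiles; a boundary edge lies in only one. So for $v=p_0$ (where all edges do leave $v$, so no entering/leaving contradiction is available) the configuration $\tau=\tau(\emptyset;1,n)$ with both of its edges at $p_0$ being boundary edges, each contained in $\tau$ alone, is not excluded by anything you have said: the fan construction simply never starts, and your deduction that $e_i,e_j$ are interior presupposes exactly what (v) is meant to prove. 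The paper closes this case by a separate argument: by (iii) any white edge at $p_0$ would have colour strictly between $i$ and $j$, which forces the two boundary edges at $p_0$ to be the black edges of $\tau$, so $\tau=\tau(\emptyset;1,n)$; one then looks at the \emph{interior} edge $(r(\tau),t(\tau))$, whose second tile must overlap $\tau$ by (T2), and checks that no such tile can be placed inside $Z$ consistently with (T1),(T2). You need to add an argument of this kind (and dually for $p_n$) before (v), and hence the final sentence of (v), is complete.
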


Indeed, since each edge of $G_T$ belongs to some tile, at least one of its end
vertices has both entering and leaving edges, and therefore (by~(T3)), this
vertex cannot be terminal (yielding (i)). Next, if $|E_T(v)|=2$, then $F_T(v)$
would consist only of the tile $\tau$ and its white copy; this is not the case
by~(T1) (yielding~(ii)). Next, assume that $v=t(\tau)$. Then $v$ is the top
vertex of all tiles in $F_T(v)$ (by~(T3)). This together with the facts that
all tiles in $F_T(v)-\{\tau\}$ are white and that any two white tiles sharing
an edge do not overlap (by~(T2)) implies~(iii) and~(iv). When $v=b(\tau)$, the
argument is similar. Finally, $v$ cannot be a boundary vertex $p_k$ or $p'_k$
for $k\ne 0,n$ since the latter vertices have both entering and leaving edges.
In case $v=p_0$, the tile $\tau$ would contain both boundary edges $p_0p_1$ and
$p_0p'_{n-1}$ (in view of~(iii)). But then the white tile sharing with $\tau$
the edge $(r(\tau),t(\tau))$ would trespass the boundary of $Z$. The case
$v=p_n$ is impossible for a similar reason (yielding~(v)).

Note that~(iii) in this corollary implies that
   \begin{numitem1}
if a black $ij$-tile $\tau$ and a (white) tile $\tau'$ share an edge $e$,
then $\tau'$ is either an $iq$-tile or a $qj$-tile for some $i<q<j$; also
$\tau'\subset C(\tau,v)$ and $\tau\subset C(\tau',v')$, where $v$ and $v'$
are the terminal and non-terminal ends of $e$, respectively.
   \label{eq:termv}
   \end{numitem1}

{\bf 2.} The following important lemma specifies the rotation angle at
non-terminal vertices.
  \begin{lemma} \label{lm:rotation}
Let $v\in V_T$ be a non-terminal vertex.

{\rm(i)} If $v$ belongs to $bd(Z)$, then $\rho(v)$ is positive and equals the
angle between the boundary edges incident to $v$.

{\rm(ii)} If $v$ is inner (i.e., not in $bd(Z)$), then $\rho(v)=2\pi$.
  \end{lemma}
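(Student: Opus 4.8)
The plan is to reduce both parts of the lemma to a single \emph{balance identity}: for a point $z$ in the interior of $Z$ lying in the interior of tiles only, the signed tile count $N(z):=\#\{\text{white }\tau\ni z\}-\#\{\text{black }\tau\ni z\}$ equals $1$. Granting this, I would recover $\rho(v)$ by an angular integration. Writing $\rho(v)=\sum_{\tau\in F_T(v)}\alpha(\tau,v)$ and expressing each signed angle as $\alpha(\tau,v)=\pm\int_{\theta\in C(\tau,v)}d\theta$ (with the tile's sign), one gets $\rho(v)=\int_0^{2\pi}c(\theta)\,d\theta$, where $c(\theta)$ is the signed number of tiles of $F_T(v)$ whose cone $C(\tau,v)$ contains the direction $\theta$. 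Because the diagram is edge-to-edge (a consequence of (T1)), any tile containing $v$ has $v$ as a vertex; hence for small $\epsilon>0$ the only tiles covering the point $v+\epsilon e^{i\theta}$ are those of $F_T(v)$, and such a $\tau$ covers it exactly when $\theta\in C(\tau,v)$. Thus $c(\theta)=N(v+\epsilon e^{i\theta})$ for almost every $\theta$.

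To prove the balance identity I would show that $N$ is locally constant off the tile edges and is unchanged when $z$ crosses an edge. By (T1) an interior edge $e$ lies in exactly two tiles $\tau_1,\tau_2$. If both are white, (T2) says they do not overlap, so they sit on opposite sides of $e$ and crossing $e$ merely trades one white tile for another, leaving $N$ fixed. If $\tau_1$ is white and $\tau_2$ black, (T2) says they overlap, hence lie on the \emph{same} side of $e$; crossing out of that side removes a $+1$ and a $-1$ at once, again leaving $N$ fixed. Since no two black tiles share an edge (T2), these are the only cases. For the base value, near the relative interior of a boundary edge $e_0$ of $Z$ the only tile present is the unique tile containing $e_0$ (that edge belongs to one tile by (T1), and all tiles lie in $Z$), and this tile is white by Corollary~\ref{cor:termv}(v); so $N=1$ there, whence $N\equiv 1$ throughout the interior of $Z$. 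Equivalently, $N$ is the degree of the map $\pi|_{D_T}\colon D_T\to Z$, which is $1$ because $\pi$ carries $bd(D_T)$, the principal intervals in their cyclic order, homeomorphically and orientation-preservingly onto $bd(Z)$; the orientation making white tiles count $+1$ and black tiles $-1$ is coherent across shared edges precisely by the overlap pattern of (T2), and its global existence is guaranteed by the disc axiom (T4).

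Finally I would read off the two cases from $\rho(v)=\int_0^{2\pi}c(\theta)\,d\theta$ together with $c(\theta)=N(v+\epsilon e^{i\theta})$. If $v$ is inner, then $v+\epsilon e^{i\theta}$ is an interior point of $Z$ for every $\theta$, so $c(\theta)=1$ for almost all $\theta$ and $\rho(v)=\int_0^{2\pi}1\,d\theta=2\pi$, giving (ii). If $v\in bd(Z)$, then $v+\epsilon e^{i\theta}$ lies inside $Z$ exactly when $\theta$ points into $Z$, i.e.\ when $\theta$ lies in the cone between the two boundary edges at $v$, where $c(\theta)=1$; for outward directions no tile covers $v+\epsilon e^{i\theta}$ (all tiles lie in $Z$ by (T1)), so $c(\theta)=0$. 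Hence $\rho(v)$ equals the angle of that inward cone, namely the angle between the boundary edges at $v$, which is positive; this gives (i).

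The main obstacle is the balance identity, and within it the two structural inputs: the edge-to-edge property, which ensures that only the tiles of $F_T(v)$ contribute near $v$, and the coherent sign bookkeeping that lets white and black tiles count as $\pm1$. Both rest on the precise overlap rules of (T2) combined with (T1), while (T4) is what secures a globally coherent orientation (so the degree is well defined) and fixes the boundary base value through Corollary~\ref{cor:termv}(v).
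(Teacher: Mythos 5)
Your argument for part (i) and your ``balance identity'' $N\equiv 1$ are both sound (the local analysis of how $N$ changes across an edge uses (T1),(T2) correctly, and the degree interpretation via (T4) is fine). The gap is in the bridge between the two: you claim that, because the diagram is edge-to-edge, ``any tile containing $v$ has $v$ as a vertex,'' so that $c(\theta)=N(v+\epsilon e^{i\theta})$. This is false for g-tilings precisely because tiles overlap. A vertex of $V_T$ can lie in the \emph{interior} of a tile that does not have it as a vertex: e.g.\ in the paper's own $n=4$ example, the non-terminal vertex $23$ lies in the interior of the black tile $\tau(2;1,4)$ (since $\xi_3=a\xi_1+b\xi_4$ with $0<a,b<1$ by the unit-height normalization) and also in the interior of the white tile $\tau(4;1,2)$; neither tile belongs to $F_T(23)$. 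Such tiles cover $v+\epsilon e^{i\theta}$ for \emph{every} $\theta$, so the correct relation is $N(v+\epsilon e^{i\theta})=c(\theta)+M(v)$, where $M(v)$ is the signed number of tiles containing $v$ in their interior. Your integration then yields $\rho(v)=2\pi\bigl(1-M(v)\bigr)$, and the lemma becomes the assertion $M(v)=0$ for every inner non-terminal $v$ --- which, given $N\equiv1$, is \emph{equivalent} to the statement being proved and is nowhere established. (For boundary vertices $M(v)=0$ is automatic, since a tile containing a boundary vertex in its interior would leave $Z$, so part (i) survives; part (ii) does not.)

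To see why this is not a removable technicality: proving $M(v)=0$ directly would require controlling exactly how the overlapping black and white tiles cancel over each non-terminal vertex, which is the real content of the lemma. The paper avoids this by a completely different, global route: it sums $\rho(v)$ over all vertices, evaluates the sum as $2\pi(|T^w|-|T^b|)$ tile by tile, subtracts the known boundary and terminal contributions, and compares with $|\hat V|$ computed from Euler's formula on the disc $D_T$; equality of the average with $2\pi$ is then upgraded to pointwise equality using the local fact that each $\rho(v)$ is a positive integer multiple of $2\pi$ (the rotating-ray argument). If you want to rescue your approach, you would need to supply an independent proof that the signed covering multiplicity of each non-terminal vertex by non-incident tiles vanishes --- for instance by an induction or a strip/contraction argument --- but as written the proposal assumes the conclusion at its key step.
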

  \begin{proof}
(i) For $v\in bd(Z)$, let $e,e'$ be the boundary edges incident to $v$, where
$e,Z,e'$ follow clockwise around $v$. Consider the maximal sequence
$e=e_0,\tau_1,e_1,\ldots,\tau_r,e_r$ of edges in $E_T(v)$ and tiles in $F_T(v)$
such that for $q=1,\ldots,r$, ~$e_{q-1},e_q$ are distinct edges of the tile
$\tau_q$, and $\tau_q\ne \tau_{q+1}$ (when $q<n$). Using~\refeq{termv}, one can
see that all tiles in this sequence are different and give the whole $F_T(v)$;
also $e_r=e'$ and the tiles $\tau_1,\tau_r$ are white. For each $q$, the ray at
$v$ containing $e_q$ is obtained by rotating the ray at $v$ containing
$e_{q-1}$ by the angle $\alpha(\tau_q,v)$ (where the rotation is clockwise if
the angle is positive). So the sum of angles over this sequence amounts to
$\rho(v)$ and is equal to the angle of $e,e'$.

To show (ii), let $V:=V_T$ and $E:=E_T$. Also denote the set of terminal
vertices by $V^t$, and the set of {\em inner} non-terminal vertices by $\hat
V$. Since the boundary of $Z$ contains $2n$ vertices and by~(i),
  \begin{equation} \label{eq:nu-rot}
  |V|=|V^t|+|\hat V|+2n\quad \mbox{and}\quad
  \sum\nolimits_{v\in V\cap bd(Z)} \rho(v)=\pi\cdot 2n-2\pi=2\pi(n-1)
  \end{equation}

Let $\Sigma:=\sum(\rho(v)\colon v\in V)$ and $\hat\Sigma:=\sum(\rho(v)\colon
v\in \hat V)$. The contribution to $\Sigma$ from each white (black) tile is
$2\pi$ (resp. $-2\pi$). Therefore, $\Sigma=2\pi(|T^w|-|T^b|)$. On the other
hand, in view of Corollary~\ref{cor:termv}(iv) and the second relation
in~\refeq{nu-rot}, $\Sigma=\hat\Sigma+2\pi(n-1)$. Then
   \begin{equation} \label{eq:hatSigma}
   \hat\Sigma=2\pi(|T^w|-|T^b|-n+1).
   \end{equation}

Considering $G_T$ as a planar graph properly embedded in the disc $D_T$ and
applying Euler formula to it, we have $|V|+|T|=|E|+1$. Each tile has four
edges, the number of boundary edges is $2n$, and each inner edge belongs to two
tiles; therefore, $|E|=2n+(4|T|-2n)/2=2|T|+n$. Then $|V|$ is expressed as
   \begin{equation} \label{eq:exp_nu}
  |V|=|E|-|T|+1=2|T|+n-|T|+1=|T|+n+1.
   \end{equation}

Also $|V|=|\hat V|+2|T^b|+2n$ (using the first equality in~\refeq{nu-rot} and
the equality $|V^t|=2|T^b|$). This and~\refeq{exp_nu} give
 $$
    |\hat V|=|V|-2|T^b|-2n=(|T|+n+1)-2|T^b|-2n=|T^w|-|T^b|-n+1.
    $$
Comparing this with~\refeq{hatSigma}, we obtain $\hat\Sigma=2\pi|\hat V|$. Now
the desired equality $\rho(v)=2\pi$ for each vertex $v\in\hat V$ follows from
the fact that $\rho(v)$ equals $2\pi\cdot d$ for some integer $d\ge 1$. The
latter is shown as follows. Let us begin with a white tile $\tau_1\in F_T(v)$
and its edges $e_0,e_1\in E_T(v)$, in this order clockwise, and form a sequence
$e_0,\tau_1,e_1,\ldots,\tau_r,e_r,\ldots$ ~similar to that in~(i) above, until
we return to the initial edge $e_0$. Let $R_q$ be the ray at $v$ containing
$e_q$. Since $\alpha(\tau_q)>0$ when $\tau_q$ is white, and
$\alpha(\tau_q)+\alpha(\tau_{q+1})>0$ when $\tau_q$ is white and $\tau_{q+1}$
is black (cf.~\refeq{termv}), the current ray $R_\bullet$ must make at least
one turn clockwise before it returns to the initial ray $R_0$. If it happens
that the sequence uses not all tiles in $F_T(v)$, we start with a new white
tile to form a next sequence (for which the corresponding ray makes at least
one turn clockwise as well), and so one. Thus, $d\ge 1$, as required (implying
$d=1$).
  \end{proof}

 \noindent {\bf Remark 1}
~If we postulate property~(ii) in Lemma~\ref{lm:rotation} as axiom~(T4') and
add it to axioms (T1)--(T3), then we can eliminate axiom~(T4); in other words,
(T4') and (T4) are equivalent subject to (T1)--(T3). Indeed, reversing
reasonings in the above proof, one can conclude that $\hat\Sigma=2\pi|\hat V|$
implies $|V|+|T|=|E|+1$. The latter is possible only if $D_T$ is a disc.
(Indeed, if $D_T$ forms a regular surface with $g$ handles and $c$ cross-caps,
from which an open disc is removed, then Euler formula is modified as
$|V|+|T|=|E|+1-2g-c$. Also $|V|$ decreases when some vertices merge.) Note that
each of axioms (T1)--(T3),(T4') is ``local''; this gives rise to a local
characterization for semi-normal bases (as well as for largest weakly separated
set-systems); see Remark~5 in Section~\ref{sec:wir-to-til}.

 \medskip
{\bf 3.} Considering the sequence of rotations of the edge ray $R_\bullet$
around a non-terminal vertex $v$ (like in the proof of
Lemma~\ref{lm:rotation}), one can see that the sets $E_T^w(v)$ and $E_T^b(v)$
are arranged as follows.
  \begin{numitem1}
For an ordinary or mixed vertex $v\in V_T$, let $E_T(v)$ consists of edges
$e_1,\ldots,e_p$ following counterclockwise around $v$, and let $e_1$ enter and
$e_p$ leave $v$. Then:
  \begin{itemize}
\item[(i)] there is $1\le p'<p$ such that $e_1,\ldots,e_{p'}$ enter $v$ and
$e_{p'+1},\ldots,e_p$ leave $v$;
\item[(ii)] if $v$ is the right vertex of $r$ black tiles and the left vertex of
$r'$ black tiles, then $r+r'<\min\{p',p-p'\}$ and the black edges incident
to $v$ are exactly $e_{p-r+1},\ldots,e_p,e_1,\ldots,e_r$ and
$e_{p'-r'+1},\ldots,e_{p'+r'}$;
\item[(iii)] the black tiles in $F_T(v)$ have the following pairs of edges
incident to $v$: $\{e_{p-r+1},e_1\},\ldots,\{e_p,e_r\}$ and
$\{e_{p'-r'+1},e_{p'+1}\},\ldots,\{e_{p'},e_{p'+r'}\}$, while the white tiles
in $F_T(v)$ have the following pairs of edges incident to $v$: (a)
$\{e_{r+1},e_{r+2}\},\ldots,\{e_{p'-r'-1},e_{p'-r'}\}$; (b)
$\{e_{p'+r'+1},e_{p'+r'+2}\},\ldots,\{e_{p-r-1},e_{p-r}\}$; (c)
$\{e_{p-r},e_{1}\},\ldots,\{e_{p},e_{r+1}\}$; (d)
$\{e_{p'-r'},e_{p'+1}\},\ldots,\{e_{p'},e_{p'+r'+1}\}$.
  \end{itemize}
  \label{eq:mixedvert}
  \end{numitem1}
(If $v$ is ordinary, then $r=r'=0$ and each (white) tile in $F_T(v)$ meets
a pair of consecutive edges $e_q,e_{q+1}$ or $e_p,e_1$.) The case with
$p=9$, $p'=5$, $r=2$, $r'=1$ is illustrated in the picture; here the black
edges are drawn in bold and the white (black) tiles at $v$ are indicated
by thin (bold) arcs.

 \begin{center}
  \unitlength=1mm
  \begin{picture}(80,25)
 \put(40,12){\circle*{2}}
  \put(36,0){\vector(1,3){3.6}}
  \put(48,0){\vector(-2,3){7}}
  \put(40,12){\vector(0,1){12}}
{\thicklines
  \put(4,0){\vector(3,1){34}}
  \put(22,0){\vector(3,2){16}}
  \put(64,0){\vector(-2,1){22.5}}
  \put(40,12){\vector(2,1){24}}
  \put(40,12){\vector(-1,1){12}}
  \put(40,12){\vector(-3,1){36}}
 }
  \put(44,11){$v$}
  \put(5,4){$e_1$}
  \put(60,5){$e_{p'=5}$}
  \put(4,18){$e_{p=9}$}
 \qbezier(40,17.5)(33,15)(35,10.5)
 \qbezier(33.5,18.5)(28,12)(33,7.5)
 \qbezier(27,16.5)(24,4)(36.5,1.5)
 \qbezier(37,3)(42,1.5)(45,4.5)
 \qbezier(44,6)(49,10)(46,15)
 \qbezier(50,7)(55,20)(40,21)
{\thicklines
 \qbezier(35,17)(31,13)(33.5,10)
 \qbezier(29,16)(27,10)(31.25,6.0)
 \qbezier(48,8)(51,12)(48,16)
  }
   \end{picture}
   \end{center}

Note that~\refeq{mixedvert} implies the following property (which will be used,
in particular, in Subsection~\ref{ssec:part3}):
   \begin{numitem1}
for a tile $\tau\in T$ and a vertex $v\in\{\ell(\tau),r(\tau)\}$, let $e,e'$ be
the edges of $\tau$ entering and leaving $v$, respectively, and suppose that
there is an edge $\tilde e\ne e,e'$ incident to $v$ and contained in
$C(\tau,v)$; then $\tilde e$ is black; furthermore: (a) $e'$ is black if
$\tilde e$ enters $v$; (b) $e$ is black if $\tilde e$ leaves $v$.
   \label{eq:3edges}
   \end{numitem1}

{\bf 4.} We will often use the fact (implied by~\refeq{xi}(ii)) that for any
g-tiling $T$,
   \begin{numitem1}
the graph $G_T=(V_T,E_T)$ is {\em graded} for each color $i\in[n]$, which
means that for any closed path $P$ in $G_T$, the numbers of forward
$i$-edges and backward $i$-edges in $P$ are equal.
   \label{eq:graded}
   \end{numitem1}
Hereinafter, speaking of a path in a directed graph, we mean is a sequence
$P=(\tilde v_0,\tilde e_1,\tilde v_1,\ldots,\tilde e_r,\tilde v_r)$ in which
each $\tilde e_p$ is an edge connecting vertices $\tilde v_{p-1},\tilde v_p$;
an edge $\tilde e_p$ is called {\em forward} if it is directed from $\tilde
v_{p-1}$ to $\tilde v_p$ (denoted as $\tilde e_p=(\tilde v_{p-1},\tilde v_p)$),
and {\em backward} otherwise (when $\tilde e_p=(\tilde v_p,\tilde v_{p-1})$.
The path $P$ is called: {\em closed} if $v_0=v_r$, {\em directed} if all its
edges are forward, and {\em simple} if all vertices $v_0,\ldots,v_r$ are
different. $P^{rev}$ denotes the reversed path $(\tilde v_r,\tilde e_r,\tilde
v_{r-1},\ldots,\tilde e_1,\tilde v_0)$.

\section{From semi-normal bases to generalized tilings}  \label{sec:bas-to-til}

In this section we prove the second assertion in Theorem~\ref{tm:bas-til},
namely, the inclusion
  \begin{equation} \label{eq:BinT}
\Bscr_n\subseteq \Bscr\Tscr_n,
  \end{equation}
where $\Bscr_n$ is the set of semi-normal bases in $2^{[n]}$ and $\Bscr\Tscr_n$
denotes the collection of the spectra of g-tilings on $Z_n$. The proof falls
into three parts, given in Subsections~\ref{ssec:part1}--\ref{ssec:part3}.

  \subsection{Flips in g-tilings} \label{ssec:part1}

Let $T$ be a g-tiling. By an {\em M-configuration} in $T$ we mean a quintuple
of vertices of the form $Xi,Xj,Xk,Xij,Xjk$ with $i<j<k$ (as it resembles the
letter ``M''), which is denoted as $CM(X;i,j,k)$. By a {\em W-configuration} in
$T$ we mean a quintuple of vertices $Xi,Xk,Xij,Xik,Xjk$ with $i<j<k$ (as
resembling ``W''), briefly denoted as $CW(X;i,j,k)$. A configuration is called
{\em feasible} if all five vertices are non-terminal, i.e., they belong to
$B_T$.

We know that any normal basis $B$ (in particular, $B=\Iscr_n$) is expressed as
$B_T$ for some pure tiling $T$, and therefore, $B\in\Bscr\Tscr_n$. Thus, to
conclude with~\refeq{BinT}, it suffices to show the following assertion, which
says that the set of g-tilings is closed under transformations analogous to
flips for semi-normal bases.

  \begin{prop} \label{pr:Tflip}
Let a g-tiling $T$ contain five non-terminal vertices $Xi$, $Xk$, $Xij$, $Xjk$,
$Y$, where $i<j<k$ and $Y\in\{Xik,Xj\}$. Then there exists a g-tiling $T'$ such
that $B_{T'}$ is obtained from $B_T$ by replacing $Y$ by the other member of
$\{Xik,Xj\}$.
  \end{prop}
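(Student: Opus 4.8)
The plan is to prove Proposition~\ref{pr:Tflip} by a purely local surgery on the g-tiling $T$, turning each of the two possible flips ($j\rightsquigarrow ik$ and $ik\rightsquigarrow j$) into an explicit replacement of a small family of tiles inside the region spanned by the five given vertices, while leaving the rest of $T$ untouched. The key observation is that the five non-terminal vertices $Xi,Xk,Xij,Xjk$ together with $Y\in\{Xj,Xik\}$ sit at heights $|X|+1$ and $|X|+2$ and form (up to the missing sixth vertex) the $1$-skeleton of a small zonogonal neighborhood that, in the pure-tiling case, would simply be the two rhombi $\tau(Xi;i,j)\cup\tau(Xj;j,k)$ (an M-configuration) versus $\tau(Xk;i,k)\cup\tau(Xik;\,\cdot\,)$ (a W-configuration). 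The flip exchanges these two local pictures. The content of the proposition is that this exchange can always be carried out even in the presence of black tiles and the associated terminal/mixed vertices.

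First I would treat the \emph{raising} case $Xj\rightsquigarrow Xik$, so $Y=Xj$ and we must produce $Xik$. I would examine the tiles of $T$ incident to the vertex $Xj$ and argue, using axioms (T1)--(T3) together with the gradedness property~\refeq{graded} and the rotation-angle computation in Lemma~\ref{lm:rotation}, that the local configuration around the quadrilateral with corners $Xi,Xk,Xij,Xjk$ is forced to be exactly an M-configuration $CM(X;i,j,k)$, i.e. the two white tiles $\tau(Xi;i,j)$ and $\tau(Xj;j,k)$ tile the parallelogram with these corners and with $Xj$ as an interior (mixed or ordinary) vertex on the diagonal. The surgery then removes these two tiles and inserts the two tiles $\tau(Xk;i,k)$ and the appropriate $ik$- or black tile producing the W-configuration $CW(X;i,j,k)$ at the new vertex $Xik$. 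I would verify (T1)--(T3) hold for $T'$ by a finite check confined to the six vertices and their incident edges, and verify (T4) via the equivalent local axiom (T4') of Remark~1: the rotation angle is unchanged at every vertex outside the configuration, and I would check $\rho=2\pi$ (or the correct boundary value) at each of the affected vertices after surgery. The \emph{lowering} case $Xik\rightsquigarrow Xj$ is then obtained by applying the raising argument to the reversed tiling $T^{rev}$, using that $(T1)$--$(T4)$ are preserved under reversal and that reversal swaps M- and W-configurations while exchanging the roles of $Y=Xik$ and $Y'=Xj$.

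The main obstacle I anticipate is the bookkeeping when some of the five vertices are \emph{mixed} rather than ordinary, i.e. when black tiles are attached along the edges of the configuration. In that case the local picture is not simply two white rhombi: axiom~(T3) and its consequence~\refeq{termv} force any black tile sharing an edge of the configuration to be an $iq$- or $qj$-tile with $i<q<j$, and these black tiles (together with their overlapping white copies) must be carried through the surgery consistently so that no two black tiles come to share an edge and so that the terminal vertices retain the property that all their incident edges uniformly leave (at a bottom) or enter (at a top). I would handle this by invoking the detailed edge-arrangement description~\refeq{mixedvert} at each mixed vertex of the configuration: it pins down exactly which edges are black and how the black/white tiles are interleaved in the cyclic order, so that the replacement of the two white ``core'' tiles can be performed while the attached black tiles are simply re-glued to the new core without violating (T2)--(T3). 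The verification that $D_{T'}$ remains a disc reduces, via (T4'), to the purely local angle check, which is why I would phrase the whole argument in terms of rotation angles rather than attempting a global homeomorphism argument.

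Finally, I would note that the spectrum changes exactly as claimed: the surgery removes the vertex $Y$ from the set of non-terminal vertices and adds $Y'$, while every other vertex of $T$ retains its status (non-terminal vertices stay non-terminal, terminal vertices stay terminal), since no black tile is created or destroyed except possibly the conversion of the two core white tiles, which are replaced by two new core tiles of the same white/black type dictated by the surrounding configuration. Hence $B_{T'}=(B_T\setminus\{Y\})\cup\{Y'\}$, completing the proof.
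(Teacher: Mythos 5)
Your skeleton matches the paper's: reduce one flip direction to the other via the reversed tiling, perform a local tile surgery, check (T1)--(T3) at the affected vertices, and handle (T4) through the local rotation-angle reformulation (T4'). But the proposal omits the two facts that carry essentially all of the difficulty, and it misdescribes the surgery itself. You assert that the configuration of tiles around $Y$ is ``forced'' by (T1)--(T3), gradedness and Lemma~\ref{lm:rotation} to be the expected pair of core tiles. That assertion is exactly statements~\refeq{edge} and~\refeq{2-tiles}, and neither is a local consequence of the axioms: \refeq{edge} (that any two non-terminal vertices $X',X'i'$ are joined by an edge --- without which you cannot even assert that the edges $(Xi,Xij)$, $(Xi,Xik)$, etc.\ are present) is proved by induction on $n$ via the $n$-contraction, whose feasibility in turn rests on the wiring correspondence of Sections~\ref{sec:til-to-wir}--\ref{sec:wir-to-til}; and \refeq{2-tiles} (that the two core tiles with common top vertex $Xijk$ actually occur in $T$) is proved by following the $j$-strip between the edges $(Xi,Xij)$ and $(Xk,Xjk)$ and counting forward versus backward edges on maximal alternating subpaths of its boundary. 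Your proposal offers no replacement for either argument.

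The second, equally serious gap: the surgery necessarily introduces vertices that are \emph{not} among the five given ones (in the lowering case the vertex $Xj$, and in some subcases also $X$; in your raising case, $Xik$ and possibly $Xijk$), and the local verification of (T1) and (T4') collapses unless these are genuinely new vertices of $G_T$. If, say, $Xj$ already occurred elsewhere in $V_T$ as a non-terminal vertex, the three new white tiles at $Xj$ would add a further $2\pi$ to its rotation angle, and duplicate tiles or a non-disc $D_{T'}$ could arise. This is statement~\refeq{nonexist}, again proved by induction through the $n$-contraction; your plan neither states nor proves it. Finally, your closing claim that ``no black tile is created or destroyed except possibly the conversion of the two core white tiles'' is false: when more than one tile is attached below $Y=Xik$, a brand-new black tile $\tau(X;i,k)$ must be inserted (so $Xik$ becomes terminal rather than vanishing), and when the edges $(Xij,Xijk)$ and $(Xjk,Xijk)$ lie on a common black tile, that tile is destroyed, $Xijk$ vanishes, and the formerly terminal vertex $Xj$ becomes non-terminal. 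Two tiles may be replaced by three or four, the number of black tiles changes, and vertices change terminal status beyond the exchange $Y\leftrightarrow Y'$; a uniform ``replace two core tiles by two new ones'' description does not cover the required case analysis.
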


 \begin{proof}
We may assume that $Y=Xik$, i.e., that we deal with the feasible
W-configuration $CW(X;i,j,k)$ (since an M-configuration in $T$ turns into
a W-configurations in the reversed g-tiling $T^{rev}$). We rely on the
following two facts which will be proved Subsections~\ref{ssec:part2}
and~\ref{ssec:part3}.
 \begin{numitem1}
Any pair of non-terminal vertices $X',X'i'$ in $T$ is connected by edge.
  \label{eq:edge}
  \end{numitem1}
(Therefore, for $T$ as above, $E_T$ contains edges $(Xi,Xij)$, $(Xi,Xik)$,
$(Xk,Xik)$ and $(Xk,Xjk)$. Note that vertices $X',X'i'$ need not be connected
by edge if some of them is terminal; e.g., in the picture before the statement
of Theorem~\ref{tm:bas-til}, the vertices with $X'=\emptyset$ and $i'=2$ are
not connected.)
  \begin{numitem1}
$T$ contains the $jk$-tile $\tau$ with $b(\tau)=Xi$ and the $ij$-tile $\tau'$
with $b(\tau')=Xk$.
  \label{eq:2-tiles}
  \end{numitem1}
Then $\ell(\tau)=Xij$,~ $r(\tau)=\ell(\tau')=Xik$,~ $r(\tau')=Xk$, and
$t(\tau)=t(\tau')=Xijk$. Since the vertices $Xi,Xk$ are non-terminal, both
tiles $\tau,\tau'$ are white. See the picture.

 \begin{center}
  \unitlength=1mm
  \begin{picture}(140,30)
   \put(0,0){\begin{picture}(45,35)
  \put(10,0){\circle*{1.5}}
  \put(10,15){\circle*{1.5}}
  \put(20,15){\circle*{1.5}}
  \put(30,0){\circle*{1.5}}
  \put(30,15){\circle*{1.5}}
  \put(3,0){$Xi$}
  \put(2,15){$Xij$}
  \put(17,10){$Xik$}
  \put(32,0){$Xk$}
  \put(32,15){$Xjk$}
    \end{picture}}
   \put(40,10){\vector(1,0){12}}
   \put(50,0){\begin{picture}(45,35)
  \put(10,0){\circle*{1.5}}
  \put(10,15){\circle*{1.5}}
  \put(20,15){\circle*{1.5}}
  \put(30,0){\circle*{1.5}}
  \put(30,15){\circle*{1.5}}
  \put(10,0){\vector(0,1){14.3}}
  \put(10,0){\vector(2,3){9.3}}
  \put(30,0){\vector(0,1){14.3}}
  \put(30,0){\vector(-2,3){9.3}}
    \end{picture}}
   \put(89,10){\vector(1,0){12}}
   \put(100,0){\begin{picture}(45,35)
  \put(10,0){\circle*{1.5}}
  \put(10,15){\circle*{1.5}}
  \put(20,15){\circle*{1.5}}
  \put(30,0){\circle*{1.5}}
  \put(30,15){\circle*{1.5}}
  \put(20,30){\circle*{1.5}}
  \put(10,0){\vector(0,1){14.3}}
  \put(10,0){\vector(2,3){9.3}}
  \put(30,0){\vector(0,1){14.3}}
  \put(30,0){\vector(-2,3){9.3}}
  \put(10,15){\vector(2,3){9.3}}
  \put(20,15){\vector(0,1){14.3}}
  \put(30,15){\vector(-2,3){9.3}}
  \put(14,14){$\tau$}
  \put(24,14){$\tau'$}
    \end{picture}}
  \end{picture}
   \end{center}

Assuming that~\refeq{edge} and~\refeq{2-tiles} are valid, we argue as follows.
First of all we observe that
  \begin{numitem1}
the vertex $v:=Xik$ is ordinary.
  \label{eq:ordinXik}
  \end{numitem1}
Indeed, since both vertices $Xi,Xik$ are non-terminal, the edge $(Xi,Xik)$
cannot belong to a black tile. So the edge $(Xi,Xik)$, which belongs to the
white tile $\tau$ and enters $v$, is white. Also the edge $(Xik,Xijk)$ of
$\tau$ that leaves $v$ is white (for if it belongs to a black tile $\bar\tau$,
then $\bar\tau$ should have $v':=Xijk$ as its top vertex, but then the cone of
$\bar\tau$ at $v'$ cannot simultaneously contain both edges $(Xij,Xijk)$ and
$(Xjk,Xijk)$, contrary to Corollary~\ref{cor:termv}(iii)). Now one can conclude
from~\refeq{mixedvert} that there is no black tile having its left or right
vertex at $v$. So $v$ is ordinary.

Let $e_0,\ldots,e_q$ be the sequence of edges entering $v$ in the
counterclockwise order; then $e_0=(Xi,Xik)$ and $e_q=(Xk,Xik)$. Since $v$ is
ordinary, each pair $e_{p-1},e_p$ ($p=1,\ldots,q$) belongs to a white tile
$\tau_p$. Two cases are possible.

 \smallskip
{\em Case 1}: The edges $e:=(Xij,Xijk)$ and $e':=(Xjk,Xijk)$ do not belong to
the same black tile. Consider two subcases.

(a) Let $q=1$. We replace in $T$ the tiles $\tau,\tau',\tau_1$ by three new
white tiles: $\tau(X;i,j)$, $\tau(X;j,k)$ and $\tau(Xj;i,k)$ (so the vertex $v$
is replaced by $Xj$). See the picture.

 \begin{center}
  \unitlength=1mm
  \begin{picture}(130,30)   
   \put(10,0){\begin{picture}(50,30)
  \put(10,0){\circle*{1}}
  \put(0,10){\circle*{1}}
  \put(30,10){\circle*{1}}
  \put(0,20){\circle*{1}}
  \put(20,20){\circle*{1}}
  \put(30,20){\circle*{1}}
  \put(20,30){\circle*{1}}
  \put(10,0){\vector(-1,1){9.7}}
  \put(30,10){\vector(-1,1){9.7}}
  \put(30,20){\vector(-1,1){9.7}}
  \put(10,0){\vector(2,1){19.7}}
  \put(0,10){\vector(2,1){19.7}}
  \put(0,20){\vector(2,1){19.7}}
  \put(0,10){\vector(0,1){9.5}}
  \put(20,20){\vector(0,1){9.5}}
  \put(30,10){\vector(0,1){9.5}}
  \put(6,-2){$X$}
  \put(-6,7){$Xi$}
  \put(14,9){$\tau_1$}
  \put(31,8){$Xk$}
  \put(-6,22){$Xij$}
  \put(9,19){$\tau$}
  \put(19,16){$v$}
  \put(24,18){$\tau'$}
  \put(31,21){$Xjk$}
  \put(22,29){$Xijk$}
  \put(8,25){$e$}
  \put(27,24){$e'$}
  \put(45,15){\vector(1,0){9.7}}
    \end{picture}}
   \put(80,0){\begin{picture}(50,30)
  \put(10,0){\circle*{1}}
  \put(0,10){\circle*{1}}
  \put(30,10){\circle*{1}}
  \put(0,20){\circle*{1}}
  \put(10,10){\circle*{1}}
  \put(30,20){\circle*{1}}
  \put(20,30){\circle*{1}}
  \put(10,0){\vector(-1,1){9.7}}
  \put(10,10){\vector(-1,1){9.7}}
  \put(30,20){\vector(-1,1){9.7}}
  \put(10,0){\vector(2,1){19.7}}
  \put(10,10){\vector(2,1){19.7}}
  \put(0,20){\vector(2,1){19.7}}
  \put(0,10){\vector(0,1){9.5}}
  \put(10,0){\vector(0,1){9.5}}
  \put(30,10){\vector(0,1){9.5}}
  \put(11,7){$Xj$}
    \end{picture}}
  \end{picture}
   \end{center}

(b) Let $q>1$. We remove the tiles $\tau,\tau'$ and add four new tiles: the
white tiles $\tau(X;i,j)$, $\tau(X;j,k)$, $\tau(Xj;i,k)$ (as before) and the
black tile $\tau(X;i,k)$ (so $v$ becomes terminal). See the picture for $q=3$;
here the added black tile is drawn in bold.

 \begin{center}
  \unitlength=1mm
  \begin{picture}(130,30)   
   \put(10,0){\begin{picture}(50,30)
  \put(0,10){\circle*{1}}
  \put(30,10){\circle*{1}}
  \put(0,20){\circle*{1}}
  \put(20,20){\circle*{1}}
  \put(30,20){\circle*{1}}
  \put(20,30){\circle*{1}}
  \put(-5,0){\circle*{1}}
  \put(20,0){\circle*{1}}
  \put(35,0){\circle*{1}}
  \put(15,10){\circle*{1}}
  \put(25,10){\circle*{1}}
  \put(30,10){\circle*{1}}
  \put(30,10){\vector(-1,1){9.7}}
  \put(30,20){\vector(-1,1){9.7}}
  \put(35,0){\vector(-1,1){9.7}}
  \put(0,10){\vector(2,1){19.7}}
  \put(-5,0){\vector(2,1){19.7}}
  \put(0,20){\vector(2,1){19.7}}
  \put(0,10){\vector(0,1){9.5}}
  \put(20,20){\vector(0,1){9.5}}
  \put(30,10){\vector(0,1){9.5}}
  \put(-5,0){\vector(1,2){4.7}}
  \put(15,10){\vector(1,2){4.7}}
  \put(20,0){\vector(1,2){4.7}}
  \put(20,0){\vector(-1,2){4.7}}
  \put(25,10){\vector(-1,2){4.7}}
  \put(35,0){\vector(-1,2){4.7}}
  \put(6,9){$\tau_1$}
  \put(19,9){$\tau_2$}
  \put(25.5,10){$\tau_3$}
  \put(9,19){$\tau$}
  \put(24,18){$\tau'$}
  \put(-6,9){$Xi$}
  \put(32,9){$Xk$}
  \put(-6,22){$Xij$}
  \put(17,20.5){$v$}
  \put(31,21){$Xjk$}
  \put(45,15){\vector(1,0){9.7}}
    \end{picture}}
   \put(80,0){\begin{picture}(50,30)
  \put(0,10){\circle*{1}}
  \put(10,10){\circle*{1}}
  \put(10,0){\circle*{1}}
  \put(30,10){\circle*{1}}
  \put(0,20){\circle*{1}}
  \put(20,20){\circle*{1}}
  \put(30,20){\circle*{1}}
  \put(20,30){\circle*{1}}
  \put(-5,0){\circle*{1}}
  \put(20,0){\circle*{1}}
  \put(35,0){\circle*{1}}
  \put(15,10){\circle*{1}}
  \put(25,10){\circle*{1}}
  \put(30,10){\circle*{1}}
  \put(30,20){\vector(-1,1){9.7}}
  \put(35,0){\vector(-1,1){9.7}}
  \put(-5,0){\vector(2,1){19.7}}
  \put(0,20){\vector(2,1){19.7}}
  \put(0,10){\vector(0,1){9.5}}
  \put(30,10){\vector(0,1){9.5}}
  \put(-5,0){\vector(1,2){4.7}}
  \put(15,10){\vector(1,2){4.7}}
  \put(20,0){\vector(1,2){4.7}}
  \put(20,0){\vector(-1,2){4.7}}
  \put(25,10){\vector(-1,2){4.7}}
  \put(35,0){\vector(-1,2){4.7}}
  \put(10,0){\vector(0,1){9.5}}
  \put(10,10){\vector(-1,1){9.7}}
  \put(10,10){\vector(2,1){19.7}}

{\thicklines
  \put(0,10){\vector(2,1){19.7}}
  \put(10,0){\vector(2,1){19.7}}
  \put(30,10){\vector(-1,1){9.7}}
  \put(10,0){\vector(-1,1){9.7}}
  }
  \put(6,-2){$X$}
  \put(4,8){$Xj$}
    \end{picture}}
  \end{picture}
   \end{center}

{\em Case 2}: Both edges $e$ and $e'$ belong to a black tile $\bar\tau$ (which
is $\tau(Xj;i,k)$). We act as in Case~1 with the only difference that
$\bar\tau$ is removed from $T$ and the white $ik$-tile at $Xj$ (which is a copy
of $\bar\tau$) is not added. Then the vertex $Xijk$ vanishes, $v$ either
vanishes or becomes terminal, and $Xj$ becomes non-terminal. See the picture;
here (a') and (b') concern the subcases $q=1$ and $q>1$, respectively, and the
arc above the vertex $Xj$ indicates the bottom cone of $\bar\tau$ in which some
white edges (not indicated) are located.

 \begin{center}
  \unitlength=1mm
  \begin{picture}(130,65)     
   \put(10,35){\begin{picture}(50,30)
  \put(-20,15){(a')}
  \put(10,0){\circle*{1}}
  \put(0,10){\circle*{1}}
  \put(30,10){\circle*{1}}
  \put(0,20){\circle*{1}}
  \put(20,20){\circle*{1}}
  \put(30,20){\circle*{1}}
  \put(20,30){\circle*{1}}
  \put(10,10){\circle*{1}}
  \put(10,0){\vector(-1,1){9.7}}
  \put(30,10){\vector(-1,1){9.7}}
  \put(30,20){\vector(-1,1){9.7}}
  \put(10,0){\vector(2,1){19.7}}
  \put(0,10){\vector(2,1){19.7}}
  \put(0,20){\vector(2,1){19.7}}
  \put(0,10){\vector(0,1){9.5}}
  \put(20,20){\vector(0,1){9.5}}
  \put(30,10){\vector(0,1){9.5}}
  \put(6,-2){$X$}
  \put(-6,7){$Xi$}
  \put(20,9){$\tau_1$}
  \put(31,8){$Xk$}
  \put(-6,22){$Xij$}
  \put(9,19){$\tau$}
  \put(17,20.5){$v$}
  \put(23,20){$\tau'$}
  \put(15,14){$\bar\tau$}
  \put(31,21){$Xjk$}
  \put(22,29){$Xijk$}
  \put(8,25){$e$}
  \put(27,24){$e'$}
  \put(11,7){$Xj$}
  \qbezier(8,12)(11,15)(14,12)
{\thicklines
  \put(10,10){\vector(-1,1){9.7}}
  \put(10,10){\vector(2,1){19.7}}
  \put(0,20){\vector(2,1){19.7}}
  \put(30,20){\vector(-1,1){9.7}}
 }
  \put(45,15){\vector(1,0){9.7}}
    \end{picture}}
   \put(80,35){\begin{picture}(50,30)
  \put(10,0){\circle*{1}}
  \put(0,10){\circle*{1}}
  \put(30,10){\circle*{1}}
  \put(0,20){\circle*{1}}
  \put(10,10){\circle*{1}}
  \put(30,20){\circle*{1}}
%
  \put(10,0){\vector(-1,1){9.7}}
  \put(10,10){\vector(-1,1){9.7}}
  \put(10,0){\vector(2,1){19.7}}
  \put(10,10){\vector(2,1){19.7}}
  \put(0,10){\vector(0,1){9.5}}
  \put(10,0){\vector(0,1){9.5}}
  \put(30,10){\vector(0,1){9.5}}
  \put(11,7){$Xj$}
  \qbezier(8,12)(11,15)(14,12)
    \end{picture}}
%
   \put(10,0){\begin{picture}(50,30)
  \put(-20,15){(b')}
  \put(0,10){\circle*{1}}
  \put(30,10){\circle*{1}}
  \put(0,20){\circle*{1}}
  \put(20,20){\circle*{1}}
  \put(30,20){\circle*{1}}
  \put(20,30){\circle*{1}}
  \put(-5,0){\circle*{1}}
  \put(20,0){\circle*{1}}
  \put(35,0){\circle*{1}}
  \put(15,10){\circle*{1}}
  \put(25,10){\circle*{1}}
  \put(30,10){\circle*{1}}
  \put(30,10){\vector(-1,1){9.7}}
  \put(30,20){\vector(-1,1){9.7}}
  \put(35,0){\vector(-1,1){9.7}}
  \put(0,10){\vector(2,1){19.7}}
  \put(-5,0){\vector(2,1){19.7}}
  \put(0,20){\vector(2,1){19.7}}
  \put(0,10){\vector(0,1){9.5}}
  \put(20,20){\vector(0,1){9.5}}
  \put(30,10){\vector(0,1){9.5}}
  \put(-5,0){\vector(1,2){4.7}}
  \put(15,10){\vector(1,2){4.7}}
  \put(20,0){\vector(1,2){4.7}}
  \put(20,0){\vector(-1,2){4.7}}
  \put(25,10){\vector(-1,2){4.7}}
  \put(35,0){\vector(-1,2){4.7}}
{\thicklines
  \put(10,10){\vector(-1,1){9.7}}
  \put(10,10){\vector(2,1){19.7}}
  \put(0,20){\vector(2,1){19.7}}
  \put(30,20){\vector(-1,1){9.7}}
 }
  \put(4,8){$\tau_1$}
  \put(19,9){$\tau_2$}
  \put(25.5,10){$\tau_3$}
  \put(9,19){$\tau$}
  \put(23,20){$\tau'$}
  \put(-6,9){$Xi$}
  \put(32,9){$Xk$}
  \put(-6,22){$Xij$}
  \put(31,21){$Xjk$}
  \put(45,15){\vector(1,0){9.7}}
  \qbezier(8,12)(11,15)(14,12)
    \end{picture}}
   \put(80,0){\begin{picture}(50,30)
  \put(0,10){\circle*{1}}
  \put(10,10){\circle*{1}}
  \put(10,0){\circle*{1}}
  \put(30,10){\circle*{1}}
  \put(0,20){\circle*{1}}
  \put(20,20){\circle*{1}}
  \put(30,20){\circle*{1}}
  \put(-5,0){\circle*{1}}
  \put(20,0){\circle*{1}}
  \put(35,0){\circle*{1}}
  \put(15,10){\circle*{1}}
  \put(25,10){\circle*{1}}
  \put(30,10){\circle*{1}}
%
  \put(35,0){\vector(-1,1){9.7}}
  \put(-5,0){\vector(2,1){19.7}}
  \put(0,10){\vector(0,1){9.5}}
  \put(30,10){\vector(0,1){9.5}}
  \put(-5,0){\vector(1,2){4.7}}
  \put(15,10){\vector(1,2){4.7}}
  \put(20,0){\vector(1,2){4.7}}
  \put(20,0){\vector(-1,2){4.7}}
  \put(25,10){\vector(-1,2){4.7}}
  \put(35,0){\vector(-1,2){4.7}}
  \put(10,0){\vector(0,1){9.5}}
  \put(10,10){\vector(-1,1){9.7}}
  \put(10,10){\vector(2,1){19.7}}

{\thicklines
  \put(0,10){\vector(2,1){19.7}}
  \put(10,0){\vector(2,1){19.7}}
  \put(30,10){\vector(-1,1){9.7}}
  \put(10,0){\vector(-1,1){9.7}}
  }
  \put(6,-2){$X$}
  \put(4,8){$Xj$}
  \qbezier(8,12)(11,15)(14,12)
    \end{picture}}
  \end{picture}
   \end{center}

 \smallskip
Let $T'$ be the resulting collection of tiles. It is routine to check that
in all cases the transformation of $T$ into $T'$ maintains the conditions
on tiles and edges involved in axioms (T1)--(T3) at the vertices
$Xi,Xk,Xij,Xjk$, as well as at the vertices $Xik$ and $Xijk$ when the last
ones do not vanish. Also the conditions continue to hold at the vertex $X$
in Cases~1(a) and~2(a') (with $q=1$), and at the vertex $Xj$ in Case 2
(when the terminal vertex $Xj$ becomes non-terminal). A less trivial task
is to verify for $T'$ the correctness at $Xj$ in Case~1 and at $X$ in
Cases~1(b) and~2(b'). We assert that
  \begin{numitem1}
(i) $V_T$ does not contain $Xj$~ in Case~1; and (ii) $V_T$ does not contain $X$
in Cases~1(b) and~2(b').
   \label{eq:nonexist}
   \end{numitem1}

Then these vertices (in the corresponding cases) are indeed new in the arising
$T'$, and now the required properties for them become evident by the
construction. Note that this implies (T4) as well. We will
prove~\refeq{nonexist} in Subsection~\ref{ssec:part3}.

Thus, assuming validity of~\refeq{edge},~\refeq{2-tiles},~\refeq{nonexist}, we
can conclude that $T'$ is a g-tiling and that $B_{T'}=(B_T-\{Xik\})\cup\{Xj\}$,
as required.
   \end{proof}

 \noindent
{\bf Remark 2} ~Adopting terminology for set-systems, we say that for the
g-tilings $T,T'$ as in the proof of Proposition~\ref{pr:Tflip}, $T'$ is
obtained from $T$ by the {\em lowering flip} w.r.t. the feasible
W-configuration $CW(X;i,j,k)$. One can see that $Xi,Xj,Xk,Xij,Xjk$ are
non-terminal vertices in $G_{T'}$; so they form a feasible M-configuration for
$T'$. Moreover, it is not difficult to check that the corresponding lowering
flip applied to the reverse of $T'$ results in the g-tiling $T^{rev}$.
Equivalently: the {\em raising flip} for $T'$ w.r.t. the configuration
$CM(X;i,j,k)$ returns the initial $T$. An important consequence of this fact
will be demonstrated in Section~\ref{sec:wir-to-til} (see
Theorem~\ref{tm:unique}).

  \subsection{Strips in a g-tiling} \label{ssec:part2}

In this subsection we show property~\refeq{2-tiles}. For this purpose, we
introduce the following notion (which will be extensively used subsequently as
well).
\medskip

 \noindent{\bf Definition.}
For $i\in[n]$, an $i$-{\em strip} (or a {\em dual $i$-path}) in a g-tiling
$T$ is a maximal sequence $Q=(e_0,\tau_1,e_1,\ldots,\tau_r,e_r)$ of edges
and tiles in it such that: (a) $\tau_1,\ldots,\tau_r$ are different tiles,
each being an $iq$- or $qi$-tile for some $q$, and (b) for $p=1,\ldots,r$,
~$e_{p-1}$ and $e_p$ are the opposite $i$-edges of $\tau_p$. \medskip

(Recall that speaking of an $i'j'$-tile, we assume that $i'<j'$.) Clearly $Q$
is determined uniquely (up to reversing it and up to shifting cyclically when
$e_0=e_r$) by any of its edges or tiles. Also, unless $e_0=e_r$, one of
$e_0,e_r$ lies on the left boundary, and the other on the right boundary of
$Z$; we default assume that $Q$ is directed so that $e_0\in \ell bd(Z)$. In
this case, going along $Q$, step by step, and using~(T2), one can see that
  \begin{numitem1}
for consecutive elements $e,\tau,e'$ in an $i$-strip $Q$: (a) if $\tau$ is
either a white $iq$-tile or a black $qi$-tile (for some $q$), then $e$
leaves $b(\tau)$ and $e'$ enters $t(\tau)$; and (b) if $\tau$ is either a
white $qi$-tile or a black $iq$-tile, then $e$ enters $t(\tau)$ and $e'$
leaves $b(\tau)$ (see the picture where the $i$-edges $e,e'$ are drawn
vertically).
   \label{eq:strip_tiles}
   \end{numitem1}

 \begin{center}
  \unitlength=1mm
  \begin{picture}(120,16)
   \put(0,0){\begin{picture}(20,16)
  \put(6,0){\vector(0,1){7.7}}
  \put(14,8){\vector(0,1){7.7}}
  \put(6,0){\vector(1,1){7.7}}
  \put(6,8){\vector(1,1){7.7}}
  \put(3,1){\vector(1,1){14}}
  \put(18,14){$Q$}
  \put(3,5){$e$}
  \put(15,9){$e'$}
  \put(11,2){$q$}
    \end{picture}}
   \put(35,0){\begin{picture}(20,16)
 {\thicklines
  \put(6,8){\vector(0,1){7.7}}
  \put(14,0){\vector(0,1){7.7}}
  \put(14,0){\vector(-1,1){7.7}}
  \put(14,8){\vector(-1,1){7.7}}
  }
  \put(17,1){\vector(-1,1){14}}
  \put(-1,14){$Q$}
  \put(8,2){$q$}
    \end{picture}}
   \put(65,0){\begin{picture}(20,16)
  \put(6,8){\vector(0,1){7.7}}
  \put(14,0){\vector(0,1){7.7}}
  \put(14,0){\vector(-1,1){7.7}}
  \put(14,8){\vector(-1,1){7.7}}
  \put(3,15){\vector(1,-1){14}}
  \put(18,0){$Q$}
  \put(8,2){$q$}
    \end{picture}}
   \put(100,0){\begin{picture}(20,16)
 {\thicklines
  \put(6,0){\vector(0,1){7.7}}
  \put(14,8){\vector(0,1){7.7}}
  \put(6,0){\vector(1,1){7.7}}
  \put(6,8){\vector(1,1){7.7}}
  }
  \put(17,15){\vector(-1,-1){14}}
  \put(-1,0){$Q$}
  \put(11,2){$q$}
    \end{picture}}
  \end{picture}
   \end{center}

Let $v_p$ (resp. $v'_p$) be the beginning (resp. end) vertex of an edge $e_p$
in $Q$. Define the {\em right boundary} of $Q$ to be the path
$R_Q=(v_0,a_1,v_1,\ldots,a_r,v_r)$, where $a_p$ is the edge of $\tau_p$
connecting $v_{p-1},v_p$. The {\em left boundary} $L_Q$ of $Q$ is defined in a
similar way (regarding the vertices $v'_p$). From~\refeq{strip_tiles} it
follows that
  \begin{numitem1}
for an $i$-strip $Q$, the forward edges of $R_Q$ are exactly those edges
in it that belong to either a white $iq$-tile or a black $qi$-tile in $Q$,
and similarly for the forward edges of $L_Q$.
  \label{eq:strip_boundary}
  \end{numitem1}

For $I\subseteq[n]$, we call a {\em maximal alternating $I$-subpath} in
$R_Q$ a maximal subsequence $P$ of {\em consecutive} elements in $R_Q$
such that each $a_p\in P$ is a $q$-edge with $q\in I$, and in each pair
$a_p,a_{p+1}$, one edge is forward and the other is backward in $R_Q$
(i.e., exactly one of the tiles $\tau_p,\tau_{p+1}$ is black). A maximal
alternating $I$-subpath in $L_Q$ is defined in a similar way. The
following fact is of importance.
  \begin{lemma} \label{lm:no-cycl}
A strip $Q$ cannot be cyclic, i.e., its first and last edges are different.
  \end{lemma}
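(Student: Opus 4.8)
The plan is to argue by contradiction, exploiting that the tiles of an $i$-strip are glued along their $i$-edges into a ``dual $i$-path'', and that these dual paths assemble into a compact $1$-manifold properly embedded in the disc $D_T$. First I would record that every interior $i$-edge lies in exactly two tiles and that both necessarily involve the colour $i$ (a tile having an $i$-edge as a side must be an $iq$- or $qi$-tile), so a strip passes through each interior $i$-edge without stopping; by contrast, each of the two boundary $i$-edges (the one in $\ell bd(Z)$ and the one in $rbd(Z)$) lies in a single tile. Viewing each strip through the midpoints of its $i$-edges and the centres of its tiles, the strips thus form a $1$-manifold in $D_T$ whose only boundary points are these two boundary $i$-edges; hence it is one arc joining them together with some disjoint circles, and a cyclic strip is precisely a circle component. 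Since the squares $\sigma(\tau_p)$ do not overlap in the cube (by (T1)), such a circle is the core of an \emph{embedded} annulus $A\subseteq D_T$, whose inner boundary cycle $R_Q$ bounds a closed subdisc $\Delta_R\subseteq D_T$.

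Next I would reduce to a clean ``interior'' situation. No $i$-tile can sit inside $\Delta_R$: its $i$-edge would be interior to $\Delta_R$, and the corresponding strip, unable to cross the $i$-free cycle $R_Q$ (whose edges are $q$-edges with $q\neq i$), would be another circle bounding a strictly smaller disc. Choosing a cyclic strip for which $R_Q$ bounds a minimal $\Delta_R$, I may therefore assume $\Delta_R$ is $i$-free: every vertex of $\Delta_R$ avoids $i$ and every tile of $\Delta_R$ is a $qq'$-tile with $q,q'\neq i$. Moreover $R_Q$ lies in $\mathrm{int}(D_T)$, since the boundary vertices not containing $i$ all lie on one connected arc of $bd(Z)$ (joined by boundary $q$-edges) and hence in the complementary component not enclosed by $R_Q$. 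The contradiction I aim for is local, at an extremal vertex. Take $u$ of minimum height $|u|$ in $\Delta_R$. By minimality every edge at $u$ leaves it; and since $i\notin u$ no $i$-edge can \emph{enter} $u$, so even the $i$-edge from $u$ into the annulus $A$ (present when $u\in R_Q$) leaves $u$. Thus all edges incident to $u$ point into the open upper half-plane, each with positive height-increment. If $u$ is ordinary this contradicts Lemma~\ref{lm:rotation}(ii): $\rho(u)=2\pi$ cannot be realised by white cones all contained in a half-plane.

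The step I expect to be the real obstacle is the case just set aside: $u$ may be a \emph{terminal} vertex, the bottom of a black tile, which by (T3) is a genuine local height-minimum (all edges leave it) with $\rho(u)=0$, so the half-plane argument does not bite. Black tiles must therefore be handled directly, and this is the crux. Here I would use the exact local picture around a black tile: Corollary~\ref{cor:termv} and~\refeq{termv} fix the colours and orientations of the white tiles forced by (T2) to overlap a black tile along its bottom edges, while~\refeq{mixedvert}--\refeq{3edges} constrain how entering and leaving edges may be arranged around mixed vertices. The goal is to show that an interior black tile whose bottom is height-minimal in $\Delta_R$ is incompatible with these overlap and angle patterns (equivalently, that the overlapping white neighbours, or a suitably chosen sweep direction, always expose a vertex violating minimality). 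Throughout, the grading property~\refeq{graded} keeps height well defined on the closed walks $R_Q,L_Q$, and axiom (T4) is exactly what makes the enclosing subdisc $\Delta_R$ available; once the terminal case is excluded, no circle component survives, so the first and last $i$-edges of $Q$ are distinct.
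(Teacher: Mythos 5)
Your overall route is genuinely different from the paper's: you argue topologically (a cyclic strip is an embedded annulus in $D_T$, its inner boundary bounds a subdisc $\Delta_R$, pass to a minimal such disc and look at a height\nobreakdash-minimal vertex, then invoke the rotation\nobreakdash-angle formula), whereas the paper never leaves the closed walk $R_Q$ itself: it uses only the grading property~\refeq{graded} together with~\refeq{termv} and~\refeq{strip_boundary} to analyze maximal alternating colour subpaths of $R_Q$, showing first that $R_Q$ can carry no $q$-edge with $q>i$ (each maximal alternating $[i+1..n]$-subpath contributes $+1$ to a sum that must vanish, and a full alternating cycle is excluded by taking the maximal colour), and then symmetrically no $q$-edge with $q<i$, so $R_Q$ would be empty. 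That argument dispatches black tiles automatically, because a backward edge of $R_Q$ (which is exactly an edge coming from a black tile) is always flanked by forward edges of strictly smaller colour.

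The problem with your version is the step you yourself flag and then leave as a ``goal'': the height\nobreakdash-minimal vertex $u$ of $\Delta_R$ may be the bottom vertex of a black tile, and this case is not a technicality --- it is precisely where all the content of the lemma lives. A terminal bottom vertex is a perfectly legitimate local minimum of height: by (T3) all its incident edges leave it, by Corollary~\ref{cor:termv}(iv) its rotation angle is $0$ rather than $2\pi$, and the white tiles at $u$ all sit inside the cone of the black tile, so no local inspection of $F_T(u)$ (via \refeq{mixedvert} or \refeq{3edges}) produces a contradiction with minimality --- the configuration is realized in the paper's own $n=4$ example at the vertex $\{2\}$. Excluding it requires some global mechanism (the paper uses the graded alternating\nobreakdash-colour count; its Proposition~\ref{pr:downflip} uses a similar global chain argument for an analogous black\nobreakdash-tile obstruction), and your proposal supplies none: ``the goal is to show that an interior black tile whose bottom is height-minimal is incompatible with these overlap and angle patterns'' is a restatement of what must be proved, not a proof. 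Until that case is closed, the argument establishes the lemma only for g-tilings in which the minimum of height on $\Delta_R$ is attained at a non-terminal vertex, which is not guaranteed. The surrounding topological setup (the annulus, the minimal subdisc, the $i$-freeness of $\Delta_R$, and the treatment of ordinary minimal vertices) is sound and could be salvaged, but as it stands the proof is incomplete at its crux.
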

  \begin{proof}
For a contradiction, suppose that some $i$-strip
$Q=(e_0,\tau_1,e_1,\ldots,\tau_r,e_r)$ is cyclic ($e_0=e_r$). One may
assume that~\refeq{strip_tiles} holds for $Q$ (otherwise reverse $Q$).
Take the right boundary $R_Q=(a_1,\ldots,a_r=a_0)$ of $Q$. For $q\in[n]$,
let $\alpha_q$ ($\beta_q$) denote the number of forward (resp. backward)
$q$-edges in $R_Q$. Since $G_T$ is graded, $\alpha_q=\beta_q$
(cf.~\refeq{graded}).

Assume that $R_T$ contains a $q$-edge with $q>i$. Put $I^>:=[i+1..n]$ and
consider a maximal alternating $I^>$-subpath in $R_Q$ (regarding $Q$ up to
shifting cyclically and taking indices modulo $r$). Using~\refeq{termv}, we
observe that if $a_p$ is an edge in $P$ such that $\tau_p$ is black, then the
edges $a_{p-1},a_{p+1}$ are contained in $P$ as well; also both tiles
$\tau_{p-1},\tau_{p+1}$ are white. This together with~\refeq{strip_boundary}
implies that the difference $\Delta_P$ between the number of forward edges and
the number of backward edges in $P$ is equal to 0 or 1, and that $\Delta_P=0$
is possible only if $P$ coincides with the whole $R_Q$ (having equal numbers of
forward and backward edges). On the other hand, the sum of numbers $\Delta_P$
over the maximal alternating $I^>$-subpaths must be equal to
$\sum_{q>i}(\alpha_q-\beta_q)=0$.

So $R_Q$ is an alternating $I^>$-cycle. To see that this is impossible,
notice that if $a_{p-1},a_p,a_{p+1}$ are $q'$-, $q$-, and $q''$-edges,
respectively, and if the tile $\tau_p$ is black, then~\refeq{termv}
implies that $q',q''<q$. Therefore, taking the maximum $q$ such that $R_Q$
contains a $q$-edge, we obtain $\alpha_q=0$ and $\beta_q>0$; a
contradiction. Thus, $R_Q$ has no $q$-edges with $q>i$ at all.

Similarly, considering maximal alternating $[i-1]$-subpaths in $R_Q$ and
using~\refeq{termv} and~\refeq{strip_boundary}, we conclude that $R_Q$ has no
$q$-edge with $q<i$. Thus, a cyclic $i$-strip is impossible.
  \end{proof}

  \begin{corollary} \label{cor:strips}
For a g-tiling $T$ and each $i\in[n]$, there is a unique $i$-strip $Q_i$.
It contains all $i$-edges of $T$, begins at the edge $p_{i-1}p_i$ and ends
at the edge $p'_{i}p'_{i-1}$ of $bd(Z)$.
  \end{corollary}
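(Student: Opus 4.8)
The plan is to reduce the statement to a simple counting argument resting on Lemma~\ref{lm:no-cycl}. First I would record the elementary fact that every tile of $T$ containing a given $i$-edge is automatically an $i$-tile: the four sides of a tile $\tau(Y;a,b)$ consist of two $a$-edges and two $b$-edges, so an $i$-edge can occur as a side only when $i\in\{a,b\}$, and in that case the two $i$-sides of $\tau$ are exactly its pair of opposite $i$-edges. Consequently every $i$-edge lies in at least one $i$-tile, and by (T1) an inner $i$-edge lies in exactly two $i$-tiles while a boundary $i$-edge lies in exactly one. Thus the edges and tiles relevant to $i$-strips are precisely \emph{all} the $i$-edges and \emph{all} the $i$-tiles of $T$.

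Next I would organize these into an incidence structure and argue it is ``$2$-regular''. Form the graph $\Gamma_i$ whose nodes are the $i$-tiles, in which each inner $i$-edge joins the two $i$-tiles sharing it, while each boundary $i$-edge is recorded as a loose end attached to its unique $i$-tile. Since each $i$-tile has exactly two $i$-sides, every node of $\Gamma_i$ has total incidence (ordinary edges plus loose ends) equal to $2$. A graph all of whose nodes have degree $2$ in this sense decomposes into disjoint simple paths, whose two extremities are loose ends, together with cycles; by the definition of an $i$-strip (with the requirement that $\tau_1,\dots,\tau_r$ be distinct and the sequence maximal) these components are exactly the maximal $i$-strips, a path component corresponding to a strip terminating in two boundary $i$-edges and a cycle component corresponding to a cyclic strip.

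Now Lemma~\ref{lm:no-cycl} forbids cyclic strips, so every component of $\Gamma_i$ is a path ending in two boundary $i$-edges. It remains to count the boundary $i$-edges. On $\ell bd(Z)$ the only $i$-edge is $p_{i-1}p_i$ (which adds the element $i$ to $[i-1]$), and on $rbd(Z)$ the only $i$-edge is $p'_ip'_{i-1}$; hence there are exactly two boundary $i$-edges, one on each side. Since each path component consumes two boundary $i$-edges and only two are available, $\Gamma_i$ has exactly one path component. This single component is the unique $i$-strip $Q_i$; it meets every $i$-tile and therefore contains all $i$-edges of $T$, and its two ends are $p_{i-1}p_i$ and $p'_ip'_{i-1}$. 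Under the convention that a strip is oriented so that $e_0\in\ell bd(Z)$, this yields exactly the asserted beginning and ending edges.

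I expect the only genuinely delicate point to be the bookkeeping in the second paragraph: verifying that the degree-$2$ components of $\Gamma_i$ coincide with maximal $i$-strips in the sense of the definition — in particular that distinctness of the tiles $\tau_1,\dots,\tau_r$ along a component is automatic once cycles are excluded (using (T1)), and that a ``short'' component consisting of a single tile with two boundary $i$-sides, should one occur, still counts as one path with two boundary ends and thus does not disturb the tally. Everything else is the direct counting above, the substantive obstacle (the nonexistence of cyclic strips) having already been dispatched by Lemma~\ref{lm:no-cycl}.
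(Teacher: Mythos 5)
Your argument is correct and is exactly the counting/parity argument the paper intends (the corollary is stated without explicit proof as an immediate consequence of Lemma~\ref{lm:no-cycl} together with the facts from (T1) that each $i$-tile has two opposite $i$-sides, each inner edge lies in two tiles, each boundary edge in one, and there are precisely two boundary $i$-edges). Your degree-$2$ incidence graph $\Gamma_i$ just makes that implicit decomposition into paths and cycles explicit, and all the delicate points you flag (distinctness of tiles along a component, the one-tile component) are handled correctly.
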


Using strip techniques, we are now able to prove property~\refeq{2-tiles} in
the assumption that~\refeq{edge} is valid (the latter will be shown in the next
subsection).
\medskip

 \noindent {\bf Proof of~\refeq{2-tiles}}~
Let $X,i,j,k$ as in the hypotheses of Proposition~\ref{pr:Tflip} (with
$Y=Xik$). We consider the part $Q$ of the $j$-strip between the $j$-edges
$e:=(Xi,Xij)$ and $e':=(Xk,Xjk)$ (these edges exist by~\refeq{edge} and $Q$
exists by Corollary~\ref{cor:strips}). Suppose that $Q$ begins at $e$ and ends
at $e'$ and consider the right boundary $R_Q=(a_1,\ldots,a_r)$ of $Q$. This is
a (not necessarily directed) path from $Xi$ to $Xk$. Comparing $R_Q$ with the
path $\tilde P$ from $Xi$ to $Xk$ formed by the forward $k$-edge $(Xi,Xik)$ and
the backward $i$-edge $(Xik,Xk)$, we have (since $G_T$ is graded):
   \begin{equation} \label{eq:ab1}
   \alpha_q-\beta_q= \left\{
  \begin{array}{rl}
 -1 & \quad \mbox{for $q=i$,} \\
 1 & \quad \mbox{for $q=k$,} \\
 0 & \quad \mbox{otherwise,}
   \end{array}
  \right.
   \end{equation}
where $\alpha_q$ ($\beta_q$) is the number of forward (resp. backward)
$q$-edges in $R_Q$. We show that $\alpha_i=0$, $\beta_i=1$, $\alpha_k=1$,
$\beta_k=0$ and $\alpha_q=\beta_q=0$ for $q\ne i,k$, by arguing as in the proof
of Lemma~\ref{lm:no-cycl}.

Let $P_1,\ldots,P_d$ be the sequence of maximal alternating $J^>$-subpaths in
$R_Q$, where $J^>:=[j+1..n]$. Each subpath $P_h$ begins and ends with forward
edges (taking into account~\refeq{termv},\refeq{strip_boundary} and the fact
that the edges $e,e'$ are white). Therefore, $\Delta_{P_h}=1$. Then
$\Delta_1+\ldots+\Delta_{P_d}=\sum_{q>j}(\alpha_q-\beta_q) =\alpha_k-\beta_k=1$
(cf.~\refeq{ab1}) implies $d=1$. Moreover, $|P_1|=1$. For if $|P_1|>1$, then
$P_1$ contains a backward edge (belonging to a black tile in $Q$), and taking
the maximum $q$ such that $P_1$ contains a $q$-edge, we obtain $\alpha_q=0$ and
$\beta_q>0$, which is impossible. Hence $P_1$ consists of a unique forward
edge, and now~\refeq{ab1} implies that it is a $k$-edge.

By similar reasonings, there is only one maximal alternating $[j-1]$-subpath
$P'$ in $R_Q$, and $P'$ consists of a unique backward $i$-edge.

Thus, $R_Q=(a_1,a_2)$, and one of $a_1,a_2$ is a forward $k$-edge, while the
other is a backward $i$-edge in $R_Q$. If $R_Q=\tilde P$ (i.e., $a_1$ is a
$k$-edge), then the tiles in $Q$ are as required in~\refeq{2-tiles}. The case
when $a_1$ is an $i$-edge is impossible. Indeed, this would imply that the
first tile $\tau$ in $Q$ is generated by the edges $a_1=(X,Xi)$ and
$e=(Xi,Xij)$; but then the cone of $\tau$ at $Xi$ contains the white edge
$(Xi,Xik)$, contrary to~\refeq{3edges}.

Now suppose that $Q$ goes from $e'$ to $e$. Then $R_Q$ begins at $Xk$ and ends
at $Xi$. Define the numbers $\alpha_q,\beta_q$ as before. Then
$\sum_{q>j}(\alpha_q-\beta_q)$ (equal to the numbers of maximal alternating
$J^>$-subpaths in $R_Q$) is nonnegative. But a similar value for the path
reverse to $\tilde P$ (going from $Xk$ to $Xi$ as well) is equal to $-1$, due
to the $k$-edge $(Xi,Xik)$ which is backward in this path; a contradiction.
\hfill\qed

  \subsection{Strip contractions} \label{ssec:part3}

The remaining properties~\refeq{edge} and~\refeq{nonexist} are proved by
induction on $n$, relying on a natural contracting operation on g-tilings (also
important for purposes of Sections~\ref{sec:contr-exp},\ref{sec:wsf}).

Let $T$ be a g-tiling on $Z_n$ and $i\in[n]$. We partition $T$ into three
subsets $T^0_i, T^-_i,T^+_i$ conisting, respectively, of all $i\ast$- and
$\ast i$-tiles, of the tiles $\tau(X;i',j')$ with $i',j'\ne i$ and
$i\not\in X$, and of the tiles $\tau(X;i',j')$ with $i',j'\ne i$ and $i\in
X$. Then $T_i^0$ is the set of tiles occurring in the $i$-strip $Q_i$, and
the tiles in $T_i^-$ are vertex disjoint from those in $T_i^+$. \medskip

 \noindent{\bf Definition.}
The $i$-{\em contraction} of $T$ is the collection $T/i$ obtained by removing
the members of $T_i^0$, keeping the members of $T_i^-$, and replacing each
$\tau(X;i',j')\in T_i^+$ by $\tau(X-\{i\};i',j')$. The image of $\tau\in T$ in
$T/i$ is denoted by $\tau/i$ (regarding it as the ``void tile'' if $\tau\in
T_i^0$). The black/white coloring of tiles in $T/i$ is inherited from $T$.
\medskip

The tiles of $T/i$ live within the zonogon generated by the vectors
$\xi_q$, $q\in[n]-\{i\}$ (and cover this zonogon). The regions $D_{T_i^-}$
and $D_{T_i^+}$ of the disc $D_T$ are simply connected, as they arise when
the interior of (the image of) the strip $Q_i$ is removed from $D_T$. The
shape $D_{T/i}$ is obtained as the union of $D_{T_i^-}$ and
$D_{T_i^+}-\eps_i$, where $\eps_i$ is the $i$-th unit base vector in
$\Rset^n$. In other words, $D_i^+$ is shifted by $-\eps_i$ and the (image
of) the left boundary $L_{Q_i}$ of $Q_i$ in it merges with (the image of)
$R_{Q_i}$ in $D_{T_i^-}$. In general, $D_{T_i^-}$ and $D_{T_i^+}-\eps_i$
may intersect at some other points, and therefore, $D_{T/i}$ need not be a
disc (this happens when $G_T$ contains two vertices $X,Xi$ not connected
by edge, or equivalently, such that $X\not\in R_{Q_i}$ and $Xi\not\in
L_{Q_i}$).

For our purposes, it suffices to deal with the case $i=n$. We take advantages
from the important property that $T/n$ is a feasible g-tiling, i.e., obeys
(T1)--(T4). Instead of a direct proof of this property (in which verification
of axiom~(T4) is rather tiresome), we prefer to appeal to explanations in
Section~\ref{sec:wir-to-til} where a similar property is obtained on the
language of wirings; see Corollary~\ref{cor:contract} in Remark~4. (More
precisely, by results in Sections~\ref{sec:til-to-wir},\ref{sec:wir-to-til},
there is a bijection $\beta$ of the g-tilings to the proper wirings.
Furthermore, one shows that removing $w_n$ from a proper $n$-wiring
$W=(w_1,\ldots,w_n)$ results in a proper $(n-1)$-wiring $W'$. It turns out that
the g-tiling $\beta^{-1}(W')$ is just $(\beta^{-1}(W))/n$, yielding the desired
property.)

We will use several facts, exposed in (i),(ii) below.

(i) In light of explanations above, validity of~(T4) for $T/n$ implies that
  \begin{numitem1}
any two vertices of the form $X',X'n$ in $G_T$ are connected by edge
(which is an $n$-edge, and therefore, $X'\in R_{Q_n}$ and $X'n\in
L_{Q_n}$).
   \label{eq:XXn}
  \end{numitem1}

(ii) For a black $i'j'$-tile $\tau\in T$ with $i',j'\ne n$, none of the
vertices $t(\tau),b(\tau)$ can occur in (the boundary of) $Q_n$. Indeed, all
edges incident to such a vertex are $q$-edges with $q\le j'<n$, whereas each
vertex occurring in $Q_n$ is incident to an $n$-edge. Also for a vertex $X'$
not occurring in $Q_n$, the local tile structure of $T/n$ at $X'-\{n\}$
(including the white/black coloring of tiles) is inherited by that of $T$ at
$X'$. It follows that
   \begin{numitem1}
if $X'$ is a non-terminal vertex for $T$, then $X'-\{n\}$ is such for $T/n$.
   \label{eq:non-term}
   \end{numitem1}

Now we are ready to prove~\refeq{edge} and~\refeq{nonexist}. \medskip

 \noindent {\bf Proof of~\refeq{edge}}~
Let $X',X'i'$ be non-terminal vertices for $T$. If $i'=n$ then these vertices
are connected by edge in $G_T$, by~\refeq{XXn}. Now let $i'\ne n$. Then
$G_{T/n}$ contains the vertices $\tilde X,\tilde X i'$, where $\tilde
X:=X'-\{n\}$, and these vertices are non-terminal, by~\refeq{non-term}. We
assume by induction that these vertices are connected by some edge $e$ in
$G_{T/n}$. Let $\tau'$ be a tile in $T/n$ containing $e$. Then the tile
$\tau\in T_n^-\cup T_n^+$ such that $\tau'=\tau/n$ has an edge connecting $X'$
and $X'i'$, as required. \hfill \qed

\medskip
 \noindent {\bf Proof of~\refeq{nonexist}}~
We use notation as in the proof of Proposition~\ref{pr:Tflip} and consider
three possible cases. \smallskip

(A) Let $k<n$ and $n\not\in X$. Then all tiles in $T$ containing the
vertex $v=Xik$ are tiles in $T/n$, and $Xi,Xk,Xik,Xij,Xjk$ are vertices
for $T/n$ forming a feasible W-configuration in it (as they are
non-terminal, by~\refeq{non-term}). By induction $G_{T/n}$ contains as a
vertex neither $Xj$ in Case~1, nor $X$ in Cases~1(b) and 2(b'). Then the
same is true for $G_T$, as required. \smallskip

(B) Let $k<n$ and $n\in X$. The argument is similar to that in (A) (taking
into account that all vertices $X'$ for $T$ that we deal with contain the
element $n$, and the corresponding vertices for $T/n$ are obtained by
removing this element). \smallskip

(C) Let $k=n$. First we consider Case~1 and show~(i) in~\refeq{nonexist}.
Suppose $G_T$ contains the vertex $Xj$. Then $G_T$ contains the $n$-edge
$\tilde e=(Xj,Xjn)$, by~\refeq{XXn}. This edge lies in the cone of $\tau'$ at
$Xjn$ (where $\tau'$ is the white $ij$-tile with $b(\tau')=Xn$, $r(\tau')=Xjn$
and $t(\tau')=Xijn$). By~\refeq{3edges}, the presence of the edge $\tilde e$
(entering $Xjn$) in this cone implies that the edge $e'=(Xjn,Xijn)$ of $\tau'$
is black. Let $\tilde \tau$ be the black tile containing $e'$; then $\tilde
\tau$ has the top vertex at $Xijn$ (since $Xjn$ is non-terminal). The $n$-edge
$e=(Xij,Xijn)$ entering $Xijn$ must be the left-to-top edge of $\tilde\tau$. So
both $e,e'$ are edges of the same black tile $\tilde\tau$, which is not the
case.

Now we consider Cases~1(b) and~2(b') and show~(ii) by arguing in a similar
way. Suppose $G_T$ contains the vertex $X$. Then $G_T$ contains the
$n$-edge $\tilde e=(X,Xn)$, by~\refeq{XXn}. This edge lies in the cone of
$\tau_q$ at $Xn$ (where, according to notation in the proof of
Proposition~\ref{pr:Tflip}, $\tau_q$ is the white tile in $T$ with
$r(\tau_q)=Xn$ and $t(\tau')=Xin$). By~\refeq{3edges}, the edge
$e'':=(Xn,Xin)$ of $\tau_q$ is black (since $\tilde e$ enters $Xn$). But
each of the end vertices $Xn,Xin$ of $e''$ has both entering and leaving
edges, and therefore, it cannot be terminal; a contradiction.
 \hfill \qed \medskip

This completes the proof of inclusion~\refeq{BinT}.

\section{From generalized tilings to semi-normal bases}  \label{sec:til-to-bas}

In this section we complete the proof of Theorem~\ref{tm:bas-til} by
proving the first assertion in it, namely, we show the inclusion
  \begin{equation} \label{eq:TinB}
\Bscr\Tscr_n\subseteq \Bscr_n.
  \end{equation}
This together with the reverse inclusion~\refeq{BinT} will give
$\Bscr\Tscr_n=\Bscr_n$, as required. \smallskip

Let $T$ be a g-tiling. We have to prove that $B_T$ is a semi-normal basis.

If $T$ has no black tile, then $B_T$ is a normal basis, and we are done. So
assume $T^b\ne \emptyset$. Our aim is to show the existence of a feasible
W-configuration $CW(X;i,j,k)$ for $T$ (formed by non-terminal vertices
$Xi,Xk,Xij,Xik,Xjk$, where $i<j<k$). Then we can transform $T$ into a g-tiling
$T'$ as in Proposition~\ref{pr:Tflip}, i.e., with
$B_{T'}=(B_T-\{Xik\})\cup\{Xj\}$. Under such a {\em lowering flip} (concerning
g-tilings), the sum of sizes of the sets involved in $B_{\bullet}$ decreases.
Then the required relation $B_T\in \Bscr_n$ follows by induction on
$\sum(|X'|\colon X'\in B_T)$ (this sort of induction is typical when one deals
with tilings or related objects, cf.~\cite{DKK-08,HS,LZ}).

In what follows by the {\em height} $h(v)$ of a vertex $v\in V_T$ we mean the
size of the corresponding subset of $[n]$. The height $h(\tau)$ of a tile
$\tau\in T$ is defined to be the height of its left vertex; then
$h(\tau)=h(r(\tau))=h(b(\tau))+1=h(t(\tau))-1$. The height of a W-configuration
$CW(X;i,j,k)$ is defined to be $|X|+2$.

In fact, we are able to show the following sharper version of the desired
property.
  \begin{prop} \label{pr:downflip}
Let $h\in[n]$. If a g-tiling $T$ has a black tile of height $h$, then there
exists a feasible W-configuration $CW(X;i,j,k)$ of the same height $h$.
Moreover, such a $CW(X;i,j,k)$ can be chosen so that $Xijk$ is the top vertex
of some black tile (of height $h$).
  \end{prop}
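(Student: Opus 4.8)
The plan is to localize the whole argument at the top vertex of a black tile of height $h$ and to read off the W-configuration from the white tiles crowding around it. Fix a black $ab$-tile $\bar\tau$ of height $h$ and let $t:=t(\bar\tau)$ be its top vertex, so $|t|=h+1$. By Corollary~\ref{cor:termv}, $t$ is terminal, every edge in $E_T(t)$ enters $t$, each white edge at $t$ lies in the cone $C(\bar\tau,t)$ and is a $q$-edge with $a<q<b$ (parts (iii)), and $\rho(t)=0$ (part (iv)). First I would deduce that \emph{every} tile incident to $t$ has $t$ as its top vertex (a tile both of whose edges at a vertex enter it is topped there), whence the white tiles at $t$ form a fan $\sigma_1,\dots,\sigma_m$ exactly filling $C(\bar\tau,t)$: listing the edges at $t$ clockwise as $f_0,\dots,f_m$ with colors $b=f_0>f_1>\dots>f_m=a$, the tile $\sigma_l$ lying between $f_{l-1}$ and $f_l$ is the $\{f_l,f_{l-1}\}$-tile with top $t$. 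Since $|E_T(t)|\ge 3$ by Corollary~\ref{cor:termv}(ii), we have $m\ge 2$.

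Next I would observe that each consecutive pair $\sigma_l,\sigma_{l+1}$ (for $1\le l\le m-1$) already \emph{is} a W-configuration. Setting $i:=f_{l+1}$, $j:=f_l$, $k:=f_{l-1}$ and $X:=t-\{f_{l-1},f_l,f_{l+1}\}$, one checks directly that $\sigma_l=\tau(Xi;j,k)$ and $\sigma_{l+1}=\tau(Xk;i,j)$, so the five vertices are $Xi,Xk,Xij,Xik,Xjk$ with top $Xijk=t$ — precisely the tile picture used in Proposition~\ref{pr:Tflip}. Its height is $|X|+2=h$ and $t$ is a black-tile top, so the ``moreover'' clause holds automatically. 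The three upper vertices $Xij=\ell(\sigma_l)$, $Xik$ (the shared vertex $t-\{j\}$), and $Xjk=r(\sigma_{l+1})$ are each joined to $t$ by an edge, hence non-terminal by Corollary~\ref{cor:termv}(i). Thus feasibility of the configuration reduces to showing that the two \emph{lower} vertices $Xi=b(\sigma_l)$ and $Xk=b(\sigma_{l+1})$ are non-terminal for a suitable choice.

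For the lower vertices I would first note that a fan-bottom $b(\sigma_l)$, being the bottom of a white tile, has edges leaving it, so by Corollary~\ref{cor:termv} it cannot be the top of a black tile; hence it is terminal \emph{iff} it is the bottom of a black tile of height $h$. The task then becomes to find an index $l$ with neither $b(\sigma_l)$ nor $b(\sigma_{l+1})$ a bottom of a height-$h$ black tile. Here I would make an extremal choice: pick $\bar\tau$ so that $t$ is the rightmost top (maximal under ``lies on the right'') among all black tiles of height $h$. If $b(\sigma_l)=b(\tau'')$ for a black height-$h$ tile $\tau''$, then $\sigma_l$ sits in the bottom cone of $\tau''$, so by \refeq{termv} together with (T1) the color span $[i'',j'']$ of $\tau''$ strictly contains $\{f_l,f_{l-1}\}$; computing $t(\tau'')=t-\xi_{f_{l-1}}-\xi_{f_l}+\xi_{i''}+\xi_{j''}$ and using $x\bigl(t(\tau'')\bigr)<x(t)$ forces the left color $i''<f_l$. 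This sharply constrains where terminal bottoms can occur, and I would combine it with the grading \refeq{graded} and an acyclicity argument in the spirit of Lemma~\ref{lm:no-cycl} to exclude the degenerate case in which every consecutive pair of fan-bottoms contains a terminal one.

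I expect this last step to be the main obstacle. Extracting the fan and verifying the three upper vertices is routine from Corollary~\ref{cor:termv} and \refeq{mixedvert}, but guaranteeing that two \emph{adjacent} fan-bottoms are simultaneously non-terminal is delicate: when it fails, each bad pair hangs a new height-$h$ black tile under one of its bottoms, and one must prove that this ``descent'' among height-$h$ black tiles cannot cycle. The cleanest route is to produce a strict monovariant along the passage $\bar\tau\rightsquigarrow\tau''$; as the computation above shows, the raw horizontal coordinate $x(t)$ need not be monotone, so the monovariant will likely have to be a generic linear functional of the top vertex refined lexicographically. Establishing such a monovariant — equivalently, the acyclicity of the descent — is where the real work lies.
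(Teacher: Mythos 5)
Your reduction is sound and closely parallels the first half of the paper's argument: the fan $\sigma_1,\dots,\sigma_m$ of white tiles filling $C(\bar\tau,t(\bar\tau))$ exists by Corollary~\ref{cor:termv} and~\refeq{mixedvert}, each consecutive pair is a W-configuration of height $h$ topped at $t(\bar\tau)$, the three upper vertices are non-terminal because they are joined to the terminal vertex $t(\bar\tau)$ by an edge, and a fan-bottom is terminal iff it is the bottom of a black tile of height $h$. (The paper phrases the good case slightly differently --- it asks that the \emph{shared} vertex $v=Xik\in M(\bar\tau)$ be ordinary, and then deduces that both bottoms are left/right vertices of white tiles --- but your sufficient condition is also valid.) The problem is that everything after this point is a declaration of intent rather than a proof. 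The statement you actually need --- that for \emph{some} black tile of height $h$ some consecutive pair of fan-bottoms is simultaneously non-terminal --- is exactly the hard content of the proposition, and you have not established it. Your extremal choice of the rightmost top $t$ yields only $x(t(\tau''))<x(t)$ for the black tile $\tau''$ hanging under a bad fan-bottom of $\bar\tau$; this is not a contradiction (other height-$h$ black tiles may well exist), and it does not iterate, because the inequality $i''<f_l$ used the rightmostness of $t$ and is unavailable at $\tau''$. As you yourself compute, $x(t(\cdot))$ is not monotone along the descent, so there is no monovariant to run the infinite-descent argument on, and the ``generic linear functional refined lexicographically'' is never exhibited.

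For comparison, the paper closes this gap not by a monovariant but by a global cycle argument: assuming every black tile of height $h$ is bad, it follows the passage $\tau\mapsto\tau'$ (where the shared vertex $v\in M(\tau)$ is the left or right vertex of $\tau'$) until it closes into an alternating cycle $C$ of white edges $e_q$ (backward) and black edges $u_q$ (forward). It then takes a maximal run of consecutive ``leftward'' steps, proves the monotonicity $k_1\ge k_2\ge\cdots\ge k_d$ of the larger colors along that run using~\refeq{mixedvert}(iii), and concludes that the maximum color among the $u_q$ strictly exceeds the maximum color among the $e_q$, contradicting the gradedness~\refeq{graded} of $G_T$. Nothing of comparable substance appears in your proposal; an argument of this kind (acyclicity of your descent relation, which is essentially Corollary~\ref{cor:poset}) must be supplied before the proof can be considered complete.
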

 \begin{proof}
Let $\tau$ be a black tile of height $h$. Denote by $M(\tau)$ the set of
vertices $v$ such that there is a white edge from $v$ to $t(\tau)$. This
set is nonempty (by Corollary~\ref{cor:termv}(ii)) and each vertex in it
is non-terminal. Suppose that some $v\in M(\tau)$ is ordinary, and let
$\lambda$ and $\rho$ be the (white) tiles sharing the edge $(v,t(\tau))$
and such that $v=r(\lambda)=\ell(\rho)$. Then the five vertices
$b(\lambda),b(\rho),\ell(\lambda),v,r(\rho)$ form a W-configuration of
height $h$ (since $h(v)=h(\tau)=h$). Moreover, this configuration is
feasible. Indeed, the vertices $\ell(\lambda),v,r(\rho)$ are non-terminal
(since each has an entering edge and a leaving edge). And the tile $\tilde
\tau$ that shares the edge $(b(\lambda),v)$ with $\lambda$ has $v$ as its
top vertex (taking into account that $\tilde \tau$ is white and overlaps
neither $\lambda$ nor $\rho$ since $v$ is ordinary); then $b(\lambda)$ is
the left vertex of $\tilde\tau$, and therefore $b(\lambda)$ is
non-terminal. The vertex $b(\rho)$ is non-terminal for a similar reason.
For an illustration, see the left fragment on the picture.

 \begin{center}
  \unitlength=1mm
  \begin{picture}(130,38)
   \put(-10,0){\begin{picture}(50,38)
  \put(10,0){\circle*{1}}
  \put(22,12){\circle*{1}}
  \put(38,12){\circle*{1}}
  \put(50,24){\circle*{1}}
  \put(26,36){\circle*{1}}
  \put(10,12){\circle{2.5}}
  \put(28,12){\circle{2.5}}
  \put(14,24){\circle{2.5}}
  \put(22,24){\circle{2.5}}
  \put(32,24){\circle{2.5}}
  \put(10,0){\line(1,1){12}}
  \put(10,12){\line(1,1){12}}
  \put(10,0){\line(0,1){12}}
  \put(22,12){\line(0,1){12}}
  \put(10,12){\line(1,3){4}}
  \put(22,24){\line(1,3){4}}
  \put(28,12){\line(1,3){4}}
  \put(28,12){\line(-1,2){6}}
  \put(32,24){\line(-1,2){6}}
{\thicklines
  \put(14,24){\line(1,1){12}}
  \put(38,12){\line(1,1){12}}
  \put(38,12){\line(-2,1){24}}
  \put(50,24){\line(-2,1){24}}
  }
  \put(15,10){$\tilde\tau$}
  \put(18,25){$\lambda$}
  \put(24,24){$v$}
  \put(27,21){$\rho$}
  \put(38,20){$\tau$}
  \put(28,36){$t(\tau)$}
   \end{picture}}
   \put(70,7){\begin{picture}(50,30)
  \put(38,0){\circle*{1}}
  \put(50,12){\circle*{1}}
  \put(26,24){\circle*{1}}
  \put(24,0){\circle*{1}}
  \put(28,0){\circle*{1}}
  \put(14,12){\circle*{1}}
  \put(18,12){\circle*{1}}
  \put(32,12){\circle*{1}}
  \put(4,12){\circle*{1}}
  \put(8,24){\circle*{1}}
  \put(18,12){\line(2,3){8}}
  \put(24,0){\line(-1,2){6}}
  \put(32,12){\line(-1,2){6}}
  \put(24,0){\line(2,3){8}}
{\thicklines
  \put(14,12){\line(1,1){12}}
  \put(38,0){\line(1,1){12}}
  \put(38,0){\line(-2,1){24}}
  \put(50,12){\line(-2,1){24}}
  \put(28,0){\line(1,3){4}}
  \put(4,12){\line(1,3){4}}
  \put(28,0){\line(-2,1){24}}
  \put(32,12){\line(-2,1){24}}
  }
  \put(33,12){$v$}
  \put(10,15){$\tau'$}
  \put(23,10){$\lambda$}
  \put(38,8){$\tau$}
  \put(28,24){$t(\tau)$}
  \put(30,17){$e$}
  \put(15,21){$u'$}
   \end{picture}}
  \end{picture}
   \end{center}

We assert that a black tile $\tau$ of height $h$ whose set $M(\tau)$
contains an ordinary vertex does exist (yielding the result).

Suppose this is not so. Let us construct an alternating sequence of white and
black edges as follows. Choose a black tile $\tau$ of height $h$ and a vertex
$v\in M(\tau)$. Let $e$ be the white edge $(v,t(\tau))$. Since $v$ is mixed (by
the supposition), there is a black tile $\tau'$ (of height $h$) such that
either (a) $v=r(\tau')$ or (b) $v=\ell(\tau')$. We say that $\tau'$ {\em lies
on the left from $\tau$} in the former case, and {\em lies on the right from
$\tau$} in the latter case. Let $u'$ be the right-to-top edge
$(r(\tau'),t(\tau'))$ of $\tau'$ in case (a), and the left-to-top edge
$(\ell(\tau'),t(\tau'))$ in case (b). Case (a) is illustrated on the right
fragment of the above picture.

Repeat the procedure for $\tau'$: choose $v'\in M(\tau')$ (which is mixed
again by the supposition); put $e':=(v',t(\tau'))$; choose a black tile
$\tau''$ such that either (a) $v'=r(\tau'')$ or (b) $v'=\ell(\tau'')$; and
define $u'$ to be be the edge $(r(\tau''),t(\tau''))$ in case (a), and the
edge $(\ell(\tau''),t(\tau''))$ in case (b). Repeat the procedure for
$\tau''$, and so on. Sooner or later we must return to a black tile that
has occurred earlier in the process. Then we obtain an alternating cycle
of white and black edges.

More precisely, there appear a cyclic sequence of different black tiles
$\tau_1,\ldots,\tau_{r-1},\tau_r=\tau_0$ of height $h$ and an alternating
sequence of white and black edges $C=(e_0=e_r,u_1,e_1,\ldots,u_r=u_0)$
(forming a cycle in $G_T$) with the following properties, for
$q=1,\ldots,r$: (a) $e_q$ is the edge $(v_q,t(\tau_q))$ for some $v_q\in
M(\tau_q)$; (b) $\tau_{q+1}$ is a black tile whose right of left vertex is
$v_q$; and (c) $u_{q+1}=(r(\tau_{q+1}),t(\tau_{q+1}))$ when
$r(\tau_{q+1})=v_q$, and $u_{q+1}=(\ell(\tau_{q+1}),t(\tau_{q+1}))$ when
$\ell(\tau_{q+1})=v_q$, where the indices are taken modulo $r$. We
consider $C$ up renumbering the indices cyclically and assume that
$\tau_q$ is an $i_qk_q$-tile, that $e_q$ is a $j_q$-edge, and that $u_q$
is a $p_q$-edge. Then $i_q<j_q<k_q$, $p_q=i_q$ if $\tau_q$ lies on the
left from $\tau_{q-1}$, and $p_q=k_q$ if $\tau_q$ lies on the right from
$\tau_{q-1}$. Note that in the former (latter) case the vertex $v_q$ lies
on the left (resp. right) from $v_{q-1}$ in the horizontal line at height
$h$ in $Z$. This implies that there exists a $q$ such that $\tau_q$ lies
on the left from $\tau_{q-1}$, and there exists a $q'$ such that $\tau_q'$
lies on the right from $\tau_{q'-1}$.

To come to a contradiction, consider a maximal subsequence $Q$ of consecutive
tiles in which each but first tile lies on the left from the previous one; ~one
may assume that $Q=(\tau_1,\tau_2,\ldots,\tau_d)$. Then $\tau_1$ lies on the
right from $\tau_0$; ~so $u_1$ is the left-to-top edge of $\tau_1$, whence
$p_1=k_1$. Also we observe that
  \begin{equation} \label{eq:monotone}
  k_1\ge k_2\ge\ldots\ge k_d.
  \end{equation}
Indeed, for $1\le q<d$, let $\lambda$ be the (white) tile containing the edge
$e_q$ and such that $r(\lambda)=v_q$. This tile lies in the cone of $\tau_q$ at
$t(\tau_q)$. So $\lambda$ is an $i'k'$-tile with $i_q\le i'<k'\le k_q$, and
therefore, the (bottom-to-right) edge $\tilde e$ of $\lambda$ entering $v_q$
has color $k'\le k_q$. Using~\refeq{mixedvert}(iii), we observe that $\tilde e$
lies in the cone of the black tile $\tau_{q+1}$ at $v_q$ (taking into account
that $v_q=r(\tau_{q+1})$). This implies that the bottom-to-right edge
$e'=(b(\lambda),v_q)$ of $\tau_{q+1}$ has color at most $k'$. Since $u_{q+1}$
is parallel to $e'$, we obtain $k_{q+1}\le k'\le k_q$, as required.

By~\refeq{monotone}, we have $j_q<k_q\le k_1=p_1$ for all $q=1,\ldots,d$. Also
if a tile $\tau_{q'}$ lies on the right from the previous tile $\tau_{q'-1}$,
then $u_{q'}$ is the left-to-top edge of $\tau_{q'}$, whence
$j_{q'}<k_{q'}=p_{q'}$. Thus, the maximum of $p_1,\ldots,p_r$ is strictly
greater than the maximum of $j_1,\ldots,j_r$. This is impossible since all
$u_1,\ldots,u_r$ are forward edges, all $e_1\ldots,e_r$ are backward edges in
$C$, and the graph $G_T$ is graded.
 \end{proof}

This completes the proof of Theorem~\ref{tm:bas-til}. \medskip

 \noindent{\bf Remark 3}
~For black tiles $\tau,\tau'\in T^b$, let us denote $\tau'\prec^\diamond \tau$
if there is a white edge $(v,t(\tau))$ such that $v$ is the right or left
vertex of $\tau'$. The proof of Proposition~\ref{pr:downflip} gives the
following additional result.

 \begin{corollary} \label{cor:poset}
The relation $\prec^\diamond$ determines a partial order on $T^b$.
  \end{corollary}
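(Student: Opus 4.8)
The plan is to prove that $\prec^\diamond$ \emph{generates} a partial order by showing it is acyclic, since the transitive closure of an acyclic relation on a finite set is automatically a strict partial order. Thus it suffices to exclude every sequence of black tiles $\tau_0\succ^\diamond\tau_1\succ^\diamond\cdots\succ^\diamond\tau_r=\tau_0$ with $r\ge 1$. First I would record that $\prec^\diamond$ relates only black tiles of equal height: if $\tau'\prec^\diamond\tau$ is witnessed by a white edge $(v,t(\tau))$ with $v\in\{\ell(\tau'),r(\tau')\}$, then $h(\tau')=h(v)=h(t(\tau))-1=h(\tau)$. Hence any $\prec^\diamond$-cycle lives inside a single height class, which is exactly the configuration analyzed in the proof of Proposition~\ref{pr:downflip}.

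The self-loop case $r=1$ is immediate: were $\tau\prec^\diamond\tau$, the witnessing edge $(v,t(\tau))$ with $v\in\{\ell(\tau),r(\tau)\}$ would be a side edge of the black tile $\tau$, hence black, contradicting that the witness is white. So $\prec^\diamond$ is irreflexive, and it remains to rule out cycles with $r\ge 2$.

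For such a cycle I would reproduce the contradiction already derived inside the proof of Proposition~\ref{pr:downflip}, now applied to the cycle handed to us rather than to one built under a standing supposition. Each step $\tau_{q+1}\prec^\diamond\tau_q$ supplies a white edge $e_q=(v_q,t(\tau_q))$ with $v_q\in M(\tau_q)\cap\{\ell(\tau_{q+1}),r(\tau_{q+1})\}$, together with the black top-edge $u_{q+1}$ of $\tau_{q+1}$, and these assemble the alternating cycle $C=(e_0=e_r,u_1,e_1,\ldots,u_r=u_0)$ in $G_T$, in which the $u$'s are forward and the $e$'s backward. Writing $\tau_q$ as an $i_qk_q$-tile, $e_q$ as a $j_q$-edge and $u_q$ as a $p_q$-edge, I decompose $C$ into maximal runs in which each successive tile lies on the left of its predecessor; within a run the monotonicity $k_1\ge k_2\ge\cdots$ of~\refeq{monotone} holds, its proof resting on \refeq{mixedvert}(iii) and Corollary~\ref{cor:termv}. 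Combined with $j_q<k_q$ and $p_{q'}=k_{q'}$ at each run-start $q'$, this forces $\max_q p_q>\max_q j_q$, so the largest color carried by a forward edge of $C$ is carried by no backward edge, contradicting the gradedness~\refeq{graded} of $G_T$.

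The main obstacle is precisely this monotonicity step and the ensuing color inequality, which is the technical heart of Proposition~\ref{pr:downflip}. Since exactly those estimates are already in hand there, the corollary follows once one observes that an arbitrary $\prec^\diamond$-cycle reproduces the very alternating cycle $C$ shown there to be impossible; combined with irreflexivity and the equal-height remark, this yields that the transitive closure of $\prec^\diamond$ is a partial order on all of $T^b$.
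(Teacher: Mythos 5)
Your proof is correct and takes essentially the same route as the paper, which derives this corollary directly from the proof of Proposition~\ref{pr:downflip} by observing that any $\prec^\diamond$-cycle would reproduce the alternating cycle $C$ shown there to contradict the gradedness of $G_T$. Your supplementary observations (that $\prec^\diamond$ only relates black tiles of equal height, that it is irreflexive because a witnessing edge would otherwise be a black side edge, and that acyclicity of a finite relation makes its transitive closure a partial order) correctly fill in details the paper leaves implicit.
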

Similarly, the relation $\prec_\diamond$ determines a partial order on $T^b$,
where for $\tau,\tau'\in T^b$, we write $\tau'\prec_\diamond \tau$ if there is
a white edge $(b(\tau),v)$ such that $v$ is the right or left vertex of
$\tau'$. It is possible that the graph on $T^b$ induced by $\prec^\diamond$ (or
$\prec_\diamond$) is a forest, but we do not go in our analysis so far.
\medskip

We conclude this section with one more result which easily follows from
Proposition~\ref{pr:downflip}.
   \begin{prop}  \label{pr:int-lflip}
Let a g-tiling $T$ be such that, for some $h<n$, all non-terminal vertices of
height $h+1$ are intervals in $[n]$ and there is no feasible W-configuration of
height $h$. Then all non-terminal vertices of height $h$ are intervals as well.
Symmetrically, if for some $h>0$, all non-terminal vertices of height $h-1$ are
co-intervals and there is no feasible M-configuration of height $h$, then all
non-terminal vertices of height $h$ are co-intervals.
  \end{prop}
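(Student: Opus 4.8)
The plan is to prove the first assertion; the second then follows by applying it to the reversed g-tiling $T^{rev}$. Passing to $T^{rev}$ replaces each vertex $V$ by $[n]-V$, so height $h$ becomes height $n-h$, intervals become co-intervals, and (as recorded in Remark~2) W-configurations become M-configurations. Thus if $T$ has all non-terminal vertices of height $h-1$ co-intervals and no feasible M-configuration of height $h$, then $T^{rev}$ has all non-terminal vertices of height $(n-h)+1$ intervals and no feasible W-configuration of height $n-h$, and the first assertion applied to $T^{rev}$ at level $n-h$ gives the claim for $T$ at level $h$. So assume henceforth that all non-terminal vertices of height $h+1$ are intervals and that there is no feasible W-configuration of height $h$. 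By the contrapositive of Proposition~\ref{pr:downflip}, $T$ has no black tile of height $h$. The first thing I would record is that every non-terminal vertex $X$ of height $h$ is then ordinary: being non-terminal it is neither top nor bottom of a black tile, and it cannot be the left or right vertex of a black tile either, since such a tile would have height $h$. If $X$ lies on $bd(Z)$ it is a principal interval, hence an interval; so it remains to treat an inner non-terminal $X$, for which $\rho(X)=2\pi$ by Lemma~\ref{lm:rotation}(ii), so $X$ has at least one leaving edge $(X,Xi)$ and at least one entering edge.

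Next I would show that the endpoint $Xi$ of any leaving edge is non-terminal, hence an interval by hypothesis: it cannot be the bottom of a black tile (all its edges would leave it by Corollary~\ref{cor:termv}(iii), contradicting the entering edge $(X,Xi)$), nor the top of a black tile (such a tile would have height $h$). Consequently, if $X$ has \emph{two} distinct leaving edges, to $Xi$ and $Xj$, then $X=(Xi)\cap(Xj)$ is an intersection of two intervals and is itself an interval, and we are done.

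The remaining, and main, case is when $X$ has a \emph{unique} upper neighbour $Y=Xi=[p..q]$; here the intersection argument is empty, and $X=[p..q]-\{i\}$ fails to be an interval exactly when $p<i<q$. The crux is to show this forces a feasible W-configuration of height $h$, contradicting the hypothesis. Since $X$ is the bottom of no tile (a tile with $b(\tau)=X$ would contribute a second leaving edge), both tiles sharing the edge $(X,Y)$ have $Y$ as top vertex and $X$ as a side vertex, and they lie on opposite sides of $(X,Y)$ by (T2); write them as the white tiles $\tau_L=\tau(X-a;a,i)$ with $X=\ell(\tau_L)$ and $a<i$, and $\tau_R=\tau(X-c;i,c)$ with $X=r(\tau_R)$ and $i<c$. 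Put $X'=X-\{a,c\}$, of height $h-2$. A direct check identifies the five vertices $X'a,X'c,X'ai,X'ac,X'ic$ with $b(\tau_R),b(\tau_L),\ell(\tau_R),X,r(\tau_L)$ respectively, so they constitute $CW(X';a,i,c)$ with $a<i<c$.

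Finally I would verify feasibility of this configuration. The vertex $X=X'ac$ is non-terminal by assumption; the side vertices $\ell(\tau_R)=X'ai$ and $r(\tau_L)=X'ic$ each have both an entering and a leaving edge inside $\tau_R$ (resp.\ $\tau_L$), so by (T3) they are neither top nor bottom of a black tile, hence non-terminal; and the bottoms $b(\tau_R)=X'a$ and $b(\tau_L)=X'c$ (of height $h-1$) have leaving edges, so they are not tops of black tiles, while a black tile having either as its bottom would have height $h$ and is excluded. Thus all five vertices are non-terminal, $CW(X';a,i,c)$ is a feasible W-configuration of height $(h-2)+2=h$, and we reach the desired contradiction. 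Hence the single-upper-neighbour case cannot occur, and every non-terminal vertex of height $h$ is an interval. The main obstacle is precisely this degenerate $u=1$ case: the naive ``intersection of two intervals'' argument breaks down, and the essential observation is that the two tiles forced on the lone up-edge, together with the absence of height-$h$ black tiles, manufacture exactly the forbidden W-configuration.
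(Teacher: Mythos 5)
Your proof is correct and follows essentially the same route as the paper's: rule out black tiles of height $h$ via Proposition~\ref{pr:downflip}, show every upper neighbour of $X$ is non-terminal and hence an interval, conclude by intersecting two intervals when $X$ has two upper neighbours, and otherwise extract a feasible W-configuration of height $h$ from the two tiles meeting along the unique up-edge. The only (cosmetic) difference is that the paper organizes the case split by whether $X$ is the bottom vertex of one of the two tiles sharing a chosen edge, rather than by the number of upper neighbours.
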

  \begin{proof}
~If $T$ has a black tile of height $h$, then there exists a feasible
W-configuration of height $h$, by Proposition~\ref{pr:downflip}. So this is not
the case.

Let $u$ be a non-terminal vertex of height $h$ and take an edge $(u,v)$. Then
the vertex $v$ (of height $h+1$) is non-terminal (for otherwise $v$ would be
the top vertex of a black tile of height $h$). Let $\tau,\tau'$ be the tiles
sharing the edge $(u,v)$; then both tiles are white and non-overlapping.
Suppose $b(\tau)=u$. Then both vertices $\ell(\tau),r(\tau)$ lie in level
$h+1$, and therefore, they are intervals. This easily implies that $u$ is an
interval as well. Similarly, $u$ is an interval if $b(\tau')=u$.

Now suppose that $u$ is neither $b(\tau)$ nor $b(\tau')$. Then
$t(\tau)=t(\tau')=v$. Letting for definiteness that $u=r(\tau)=\ell(\tau')$, we
obtain that the vertices $u,b(\tau),b(\tau'),\ell(\tau),r(\tau')$ form a
feasible W-configuration of height $h$; a contradiction (the case when
$b(\tau)$ or $b(\tau')$ is terminal is impossible, otherwise it would belong to
a black tile of height $h$).

The second assertion in the theorem follows from the first one by considering
the reversed tiling.
  \end{proof}

(Note that if all non-terminal vertices of height $h$ in a g-tiling are
intervals, then there is no feasible W-configuration of height $h$. Indeed,
suppose such a configuration $CW(X;i,j,k)$ exists. Then $Xik$ is a non-terminal
vertex of height $h$. Since $i<j<k$ and $j\not\in X$, the set $Xik$ is not an
interval.)
\smallskip

In view of the coincidence of the set of spectra of g-tilings on $Z_n$ with the
set of largest weakly separated collections in $2^{[n]}$ (proved in
Section~\ref{sec:wsf}), Proposition~\ref{pr:int-lflip} answers affirmatively
Conjecture~5.5 in~Leclerc and Zelevinsky~\cite{LZ}.

Finally, analyzing the proof of Proposition~\ref{pr:int-lflip}, one can see
that this proposition can be slightly strengthened as follows: if there is no
feasible W-configuration of height $h$, then each non-terminal vertex
$Y\subset[n]$ of height $h$ not contained in the boundary of $Z_n$ is
representable as $Y'\cap Y''$ for some non-terminal vertices $Y',Y''\subset[n]$
of height $h+1$. (Similarly, if there is no feasible M-configuration of height
$h$, then each non-terminal vertex $Y$ of height $h$ not contained in the
boundary of $Z_n$ is representable as $Y'\cup Y''$ for some non-terminal
vertices $Y',Y''$ of height $h-1$.)

\section{From generalized tilings to proper wirings}  \label{sec:til-to-wir}

In this section we show the following
   \begin{prop} \label{pr:til-wir}
For any g-tiling $T$ on $Z_n$, there exists a proper wiring $W$ on $Z_n$ such
that $B_W=B_T$.
   \end{prop}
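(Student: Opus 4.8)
The plan is to construct $W$ as the planar dual of $T$, exploiting the fact that, although tiles of $T$ may overlap in the zonogon, their lifts $\sigma(\tau)$ have pairwise disjoint interiors in the cube and their union $D_T$ is a genuine disc (axioms (T1) and (T4)). Thus on $D_T$ the squares $\sigma(\tau)$ form an honest, non-overlapping square complex, and ordinary pseudoline duality applies. For each color $i$ I would take the unique $i$-strip $Q_i$ of Corollary~\ref{cor:strips}; it lifts to a band of squares in $D_T$, each entered and left through a pair of opposite $i$-edges, and by Lemma~\ref{lm:no-cycl} this band is acyclic. I draw a curve $\tilde w_i$ threading the band, joining the midpoints of successive opposite $i$-edges, and set $w_i:=\pi(\tilde w_i)$, oriented from $s_i$ to $s'_i$ in agreement with the direction of $Q_i$ from $p_{i-1}p_i$ to $p'_ip'_{i-1}$. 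Since every square $\sigma(\tau)$ has exactly two edge-directions, precisely two of the curves $\tilde w_\bullet$ pass through it and meet once, transversally, at its centre; hence the common points of $w_i$ and $w_j$ correspond bijectively to the $\{i,j\}$-tiles of $T$. As distinct tiles never coincide in the plane (T1) and no tile has three colors, axiom~(W1) follows at once, each $w_i$ being non-self-intersecting because $Q_i$ is simple.

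Next I would establish axiom~(W2) together with the face/vertex dictionary. For (W2), the squares lying in both bands $Q_i$ and $Q_j$ are exactly the $\{i,j\}$-tiles, and traversing $w_i$ and $w_j$ one meets them in opposite orders; I would deduce this from the grading property~\refeq{graded} and the fact that two transverse bands in the disc $D_T$ cross along a single monotone staircase. The structural heart is a bijection between the faces of $G_W$ and the vertices of $T$: each region of $D_T\setminus\bigcup_i\tilde w_i$ contains exactly one vertex $X$ of the square complex, and crossing a curve $\tilde w_i$ toggles membership of $i$, so normalizing the face at $p_0$ to $\emptyset$ gives $X(F)=X$ for the dual face $F$. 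Away from the folding locus $\pi$ is a local homeomorphism, so this dictionary descends to $Z_n$, and the loci where $\pi$ reverses orientation are precisely the black tiles.

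Finally I would verify the crucial axiom~(W3), properness, and the spectral identity, which I expect to be the main obstacle. The orientation-reversing (black) crossings of $W$ are exactly the images of the black tiles, and each black $\{i,j\}$-crossing is the root of the two lenses flanking it, so the number of lenses equals $2|T^b|$. On the other side, each black tile contributes its distinct top and bottom terminal vertices (Corollary~\ref{cor:termv}(i)), whence $|V^t|=2|T^b|$ as well, and under the fold these two terminal vertices are carried precisely to the two lens-faces seated at the root. This matches lenses with cyclic faces and identifies $\phi(L)$ with the face at the root, giving (W3); non-terminal vertices then correspond exactly to non-cyclic faces. Properness follows since, by Corollary~\ref{cor:termv}(ii),(iii), the top (or bottom) vertex of each black $ij$-tile carries a white edge of some intermediate color $q$ with $i<q<j$, so the wire $w_q$ crosses the corresponding lens and no cyclic face is a whole lens. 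Combining the face/vertex dictionary with the identification of cyclic faces with terminal vertices yields $B_W=B_T$, as required. The delicate points I would treat most carefully are the exact behaviour of $\pi$ along the folds (the black tiles) and the resulting claim that the only cyclic faces produced are the lens-faces seated at the roots.
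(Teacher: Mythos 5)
Your construction is exactly the paper's: thread a dual curve through each $i$-strip on the disc $D_T$, where the squares $\sigma(\tau)$ form a genuine non-overlapping complex, and transport to $Z_n$. The verification of (W1), the face/vertex dictionary, the identification of cyclic faces with terminal vertices, the count $|\Lscr(W)|=2|T^b|=|V^t|$, and the properness argument via $|E_T(v)|\ge 3$ all match the paper. However, there is a genuine gap at axiom (W2), which is the technical heart of the paper's proof. You justify the order-reversal of common points by appealing to ``the fact that two transverse bands in the disc $D_T$ cross along a single monotone staircase.'' This is not a fact: the bands $Q_i$ and $Q_j$ share \emph{all} the $ij$-tiles of $T$, and whenever a black $ij$-tile exists the wires $w_i,w_j$ cross $r_{ij}>1$ times --- that is precisely what creates lenses, which you yourself invoke in the next paragraph. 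So the ``single staircase'' claim is internally inconsistent with the rest of your argument, and the grading property \refeq{graded} alone does not yield (W2): for two simple arcs in a disc the intersection orders along the two arcs need not be reversed. The paper proves (W2) via a separate Claim: assuming two wires meet at a white point $x$ and again at a later point $y$, it compares the portion of the right boundary $R_{Q_i}$ with the portion of the left boundary $L_{Q_j}$ between the corresponding tiles, and derives $\Delta>0$ and $\Delta'<0$ for the signed counts of $q$-edges with $i<q<j$ over maximal alternating subpaths, contradicting \refeq{graded}. Some argument of this strength is needed; without it (W2), and hence also the precise form of the bijection $\phi$ in (W3) (that $\phi(L)$ is the cyclic face \emph{containing the root of} $L$), is unproved.

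A secondary point: you set $w_i:=\pi(\tilde w_i)$, but $\pi$ restricted to $D_T$ is not injective (a black tile overlaps its white neighbours in $Z$), so the literal projections of the $\tilde w_i$ may self-intersect or acquire spurious crossings. The paper instead transports the curves by a homeomorphism $\gamma:D_T\to Z$ fixed on the boundary, which is legitimate because wirings are considered only up to homeomorphisms of $Z$ stable on $bd(Z)$. You should do the same rather than use $\pi$.
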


This and the converse assertion established in the next section will imply that
the collection $\Bscr\Tscr_n$ of the spectra $B_T$ of g-tilings on $Z_n$
coincides with the collection $\Bscr\Wscr_n$ of the spectra $B_W$ of proper
wirings $W$ on $Z_n$, and then Theorem~\ref{tm:bas-wir} will follow from
Theorem~\ref{tm:bas-til}.
\medskip

   \noindent {\bf Proof}~
For convenience we identify and use the same notation for vertices, edges and
tiles concerning a g-tiling $T$ on $Z=Z_n$ and their corresponding points,
line-segments and squares (respectively) in the disc $D_T$ (every time it will
be clear from the context or explicitly indicated which of $Z$ and $D_T$ we
deal with). Accordingly, the planar graph $G_T=(V_T,E_T)$ is regarded as
properly embedded in $D_T$.

In order to construct the desired wiring, we first draw curves on $D_T$
associated to strips (dual paths) in $G_T$. More precisely, for each $i\in[n]$,
take the $i$-strip $Q_i=(e_0,\tau_1,e_1, \ldots,\tau_r,e_r)$ for $T$ (defined
in Section~\ref{sec:bas-to-til}), considering it as the corresponding sequence
of edges and squares in $D_T$. (Recall that $Q_i$ contains all $i\ast$- and
$\ast i$-tiles in $T$, $e_0$ is the edge $p_{i-1}p_i$ on the left boundary
$\ell bd(Z)$, and $e_r$ is the edge $p'_ip'_{i-1}$ on $rbd(Z)$; cf.
Corollary~\ref{cor:strips}.) For $q=1,\ldots,r$, draw the line-segment on the
square $\tau_q$ connecting the median points of the edges $e_{r-1}$ and $e_r$.
This segment meets the central point of $\tau_q$, denoted by $c(\tau_q)$. The
concatenation of these segments gives the desired (piece-wise linear) curve
$\zeta_i$ corresponding to $Q_i$; we direct $\zeta_i$ according to the
direction of $Q_i$.

Fix a homeomorphic map $\gamma:D_T\to Z$ such that each boundary edge of $D_T$
is linearly mapped onto the corresponding edge of $bd(Z)$. Then the curves
(``wires'') $\zeta_i$ on $D_T$ generate the wires $w_i:=\gamma(\zeta_i)$ on $Z$
(where $w_i$ begins at the median point $s_i$ of $p_{i-1}p_i$ on $\ell bd(Z)$
and ends at the medial point $s'_i$ of $p'_ip'_{i-1}$ on $rbd(Z)$). We assert
that the wiring $W=(w_1,\ldots,w_n)$ is as required in the proposition.

Obviously, $W$ satisfies axiom~(W1) for $W$. To verify the other axioms, we
first explain how the planar graphs $G_T$ and $G_\zeta$ on $D_T$ are related to
each other, where $G_\zeta$ is the ``preimage'' by $\gamma$ of the graph $G_W$
(it is defined as in Subsection~\ref{ssec:wiring} by considering
$\zeta_1,\ldots,\zeta_n$ in place of $w_1,\ldots,w_n$). The vertices of
$G_\zeta$ are the central points $c(\tau)$ of squares $\tau$ (where
corresponding wires cross one another) and the points $s_i,s'_i$. (We identify
corresponding points on the boundaries of $D_T$ and $Z$, writing $s_i$ for
$\gamma^{-1}(s_i)$.) Each vertex $v$ of $G_T$ one to-one corresponds to the
face of $G_\zeta$ where $v$ is located, denoted by $v^\ast$. The edges of color
$i$ in $G_\zeta$ (which are the pieces of $\zeta_i$ obtained by its subdivision
by the central points of squares lying on $\zeta_i$) one-to-one correspond to
the $i$-edges of $G_T$. More precisely, if an $i$-edge $e\in E_T$ belongs to
squares $\tau,\tau'$ and if $\tau,e,\tau'$ occur in this order in the
$i$-strip, then the $i$-edge of $G_\zeta$ corresponding to $e$, denoted by
$e^\ast$, is the piece of $\zeta_i$ between $c(\tau)$ and $c(\tau')$, and this
$e^\ast$ is directed from $c(\tau)$ to $c(\tau')$. Observe that $e$ crosses
$e^\ast$ {\em from right to left} on the disc. (We assume that the clockwise
orientation on $D_T$ is agreeable by $\gamma$ with that of $Z$.) The first and
last pieces of $\zeta_i$ correspond to the boundary $i$-edges $p_{i-1}p_i$ and
$p'_ip'_{i-1}$ of $G_T$, respectively.

Consider an $ij$-tile $\tau\in T$, and let $e,e'$ be its $i$-edges, and $u,u'$
its $j$-edges, where $e,u$ leave $b(\tau)$ and $e',u'$ enter $t(\tau)$. We know
that: (a) if $\tau$ is white, then $e$ occurs in $Q_i$ before $e'$, while $u$
occurs in $Q_j$ after $u'$, and (b) if $\tau$ is black, then $e$ occurs in
$Q_i$ after $e'$, while $u$ occurs in $Q_j$ before $u'$. In each case, in the
disc $D_T$, both $e,e'$ cross the wire $\zeta_i$ from right to left (w.r.t. the
direction of $\zeta_i$), and similarly both $u,u'$ cross $\zeta_j$ from right
to left. Also it is not difficult to see (using~(T1),(T2)) that when $\tau$ is
white, the orientation of the tile $\tau$ in $Z$ coincides with that of the
square $\tau$ in $D_T$, whereas when $\tau$ is black, the clockwise orientation
of $\tau$ in $Z$ turns in the counterclockwise orientation of $\tau$ in $D_T$
(causing the ``orientation-reversing'' behavior of wires at the vertex
$c(\tau)$ of $G_\zeta$). It follows that: in case~(a), $\zeta_j$ crosses
$\zeta_i$ at $c(\tau)$ from left to right, and therefore, the vertex $c(\tau)$
of $G_\zeta$ is white, and in case~(b), $\zeta_j$ crosses $\zeta_i$ at
$c(\tau)$ from right to left, and therefore, the vertex $c(\tau)$ is black.
(Both cases are illustrated in the picture.) So the white (black) tiles of $T$
generate the white (resp. black) vertices of $G_\zeta$.

 \begin{center}
  \unitlength=1mm
  \begin{picture}(140,30)
   \put(0,0){\begin{picture}(60,30)
  \put(15,6){\vector(1,1){7.7}}
  \put(7,14){\vector(1,1){7.7}}
  \put(15,6){\vector(-1,1){7.7}}
  \put(23,14){\vector(-1,1){7.7}}
  \put(0,20){(a)}
  \put(9,8){$e$}
  \put(20,8){$u$}
  \put(8,18){$u'$}
  \put(20,18){$e'$}
  \put(14,13){$\tau$}
  \put(10,0){(in $Z$)}
  \put(28,14){\vector(1,0){10}}
  \put(50,6){\vector(1,1){7.7}}
  \put(42,14){\vector(1,1){7.7}}
  \put(50,6){\vector(-1,1){7.7}}
  \put(58,14){\vector(-1,1){7.7}}
  \put(50,14){\circle*{1}}
  \put(46,6){$e$}
  \put(52.5,6){$u$}
  \put(41.5,16){$u'$}
  \put(56.5,16){$e'$}
  \put(45,0){(in $D_T$)}
  \put(42,6){\vector(1,1){16}}
  \put(42,22){\vector(1,-1){16}}
  \put(59,21){$\zeta_i$}
  \put(59,5){$\zeta_j$}
    \end{picture}}
   \put(80,0){\begin{picture}(60,30)
{ \thicklines
  \put(15,6){\vector(1,1){7.7}}
  \put(7,14){\vector(1,1){7.7}}
  \put(15,6){\vector(-1,1){7.7}}
  \put(23,14){\vector(-1,1){7.7}}
  }
  \put(0,20){(b)}
  \put(9,8){$e$}
  \put(20,8){$u$}
  \put(8,18){$u'$}
  \put(20,18){$e'$}
  \put(14,13){$\tau$}
  \put(10,0){(in $Z$)}
  \put(28,14){\vector(1,0){10}}
 {\thicklines
  \put(58,14){\vector(-1,-1){7.7}}
  \put(50,22){\vector(-1,-1){7.7}}
  \put(50,6){\vector(-1,1){7.7}}
  \put(58,14){\vector(-1,1){7.7}}
  }
  \put(49,12.5){$\blacklozenge$}
  \put(44.5,6){$e'$}
  \put(56.5,10){$u$}
  \put(45.5,20){$u'$}
  \put(56.5,16.5){$e$}
  \put(45,0){(in $D_T$)}
  \put(42,6){\vector(1,1){16}}
  \put(58,6){\vector(-1,1){16}}
  \put(59,21){$\zeta_i$}
  \put(38,21){$\zeta_j$}
    \end{picture}}
  \end{picture}
   \end{center}

Consider a vertex $v$ of $G_T$ and an edge $e\in E_T(v)$. Then the edge
$e^\ast$ belongs to the boundary of the face $v^\ast$ of $G_\zeta$.  As
mentioned above, $e$ crosses $e^\ast$ from right to left on $D_T$. This implies
that $e^\ast$ is directed clockwise around $v^\ast$ if $e$ leaves $v$, and
counterclockwise if $e$ enters $v$. In view of axiom~(T3), we obtain that
  \begin{numitem1}
the terminal vertices of $G_T$ and only these generate cyclic faces of
$G_W\simeq G_\zeta$; moreover, for $\tau\in T^b$, the boundary cycle of
$(t(\tau))^\ast$ is directed counterclockwise, while the boundary cycle of
$(b(\tau))^\ast$ is directed clockwise.
  \label{eq:cyc-term}
  \end{numitem1}
(We use the fact that any non-terminal vertex $v\ne p_0,p_n$ has both entering
and leaving edges, and therefore, the boundary of the face $v^\ast$ has edges
in both directions. When $v=p_0$ ~($v=p_n$), a similar fact for $v^\ast$ is
valid as well, since $v^\ast$ contains the edges $(p_0,s_1)$ and $(p_0,s'_{n})$
~(resp. $(s_n,p_n)$ and $(s'_1,p_n)$) lying on the boundary of $Z$.)

Next, for each $i\in[n]$, removing from $D_T$ the interior of the $i$-strip
$Q_i$ (i.e., the relative interiors of all edges and tiles in it) results in
two closed regions $\Omega_1,\Omega_2$, the former containing the vertex
$\emptyset$, and the latter containing the vertex $[n]$ (regarding the vertices
as subsets of $[n]$). The fact that all edges in $Q_i$ (which are the $i$-edges
of $G_T$) go from $\Omega_1$ to $\Omega_2$ implies that each vertex $v$ of
$G_T$ occurring in $\Omega_1$ (in $\Omega_2$) is a subset in $[n]$ not
containing (resp. containing) the element $i$. So $i\not\in X(v^\ast)$ if
$v\in\Omega_1$, and $i\in X(v^\ast)$ if $v\in\Omega_2$. This implies the
desired equality for spectra: $B_W=B_T$.

A less trivial task is to check validity of~(W2) for $W$. One can see that
axiom~(W2) is equivalent to the condition that if wires $w_i,w_j$ intersect at
a point $x$ and this point is white, then the parts of $w_i,w_j$ after $x$ do
not intersect. So it suffices to show the following \medskip

 \noindent {\bf Claim}~
Let wires $\zeta_i,\zeta_j$ with $i<j$ intersect at a white point $x$.
Then the part $\zeta$ of $\zeta_i$ from $x$ to $s'_i$ and the part
$\zeta'$ of $\zeta_j$ from $x$ to $s'_j$ have no other common points.
\medskip

  \noindent {\bf Proof of the Claim}~
Suppose this is not so and let $y$ be the common point of $\zeta,\zeta'$ next
to $x$ in $\zeta$. Since $x$ is white, $y$ is black. Therefore, the $ij$-tile
$\tau$ such that $x=c(\tau)$ is white, and the $ij$-tile $\tau'$ such that
$y=c(\tau')$ is black. Also in both strips $Q_i$ and $Q_j$, the tile $\tau$
occurs earlier than $\tau'$. One can see (cf.~\refeq{strip_tiles}) that in the
strip $Q_i$, the edge succeeding $\tau$ is $(r(\tau),t(\tau))$ and the edge
preceding $\tau'$ is $(r(\tau'),t(\tau'))$, whereas in the strip $Q_j$, the
edge succeeding $\tau$ is $(b(\tau),r(\tau))$ and the edge preceding $\tau'$ is
$(b(\tau'),r(\tau'))$. So the right boundary of $Q_i$ passes the vertices
$r(\tau)$ and $r(\tau')$, in this order, and similarly for the left boundary of
$Q_j$.

Consider the part $R$ of $R_{Q_i}$ from $r(\tau)$ to $r(\tau')$ and the part
$L$ of $L_{Q_j}$ from $r(\tau)$ to $r(\tau')$. For $q\in[n]$, let $\alpha_q$,
$\alpha'_q$, $\beta_q$, $\beta'_q$ be the numbers of $q$-edges that are forward
in $R$, forward in $L$, backward in $R$, and backward in $L$, respectively.
Since $G_T$ is graded, we have ($\ast$) $\alpha_q-\beta_q=\alpha'_q-\beta'_q$.

Next we argue in a similar spirit as in the proof of Lemma~\ref{lm:no-cycl}.
Define $I:=[i+1..j-1]$, ~$\Delta:=\sum_{q\in I}(\alpha_q-\beta_q)$, and
~$\Delta':=\sum_{q\in I}(\alpha'_q-\beta'_q)$. We assert that $\Delta>0$ and
$\Delta'<0$, which leads to a contradiction with ($\ast$) above.

To see $\Delta>0$, consider a $q$-edge $e$ in $R$ with $q\in I$, and let
$\tau^e$ denote the tile in $Q_i$ containing $e$. Since $q>i$, ~$\tau^e$
is white if $e$ is forward, and $\tau^e$ is black if $e$ is backward in
$R$ (cf.~\refeq{strip_boundary}). Using this, one can see that:

(i) for a $q$-edge $e\in R$ such that $q\in I$ and $\tau^e$ is black, the next
edge $e'$ in $R_{Q_i}$ is a forward $q'$-edge in $R$ with $q'\in I$ (since the
fact that $\tau^e$ is a black $iq$-tile implies that $\tau^{e'}$ is a white
$q'q$-tile with $i<q'<q$, in view of~\refeq{termv}); a similar property holds
for the edge in $R_{Q_i}$ preceding $e$;

(ii) the last edge $e$ of $R$ is a forward $q$-edge with $q\in I$ (since
the tile $\tau^e$ shares an edge with the black $ij$-tile $\tau'$);

(iii) if the first edge $e$ of $R$ is backward, then it is a $q$-edge with
$q\not\in I$ (since $\tau^e$ is black and shares an edge with the white
$ij$-tile $\tau$).

These observations show that the first and last edges of any maximal
alternating $I$-subpath $P$ in $R$ are forward, and therefore, $P$ contributes
$+1$ to $\Delta$. Also at least one such $P$ exists, by~(ii). So $\Delta>0$, as
required.

The inequality $\Delta'<0$ is shown in a similar way, by considering $L$ and
swapping ``forward'' and ``backward'' in the above reasonings (due to replacing
$q>i$ by $q<j$). More precisely, for a $q$-edge $e$ in $L$ with $q\in I$, the
tile $\tau^e$ in $Q_j$ containing $e$ is black if $e$ is forward, and white if
$e$ is backward (in view of $q<j$ and~\refeq{strip_boundary}). This implies
that:

(i') for a $q$-edge $e\in L$ such that $q\in I$ and $\tau^e$ is black, the next
edge $e'$ in $L_{Q_j}$ is a backward $q'$-edge in $L$ with $q'\in I$; and
similarly for the previous edge in $L_{Q_j}$;

(ii') the last edge $e$ of $L$ is a backward $q$-edge with $q\in I$;

(iii') if the first edge $e$ of $R$ is forward, then it is a $q$-edge with
$q\not\in I$.

Then the first and last edges of any maximal alternating $I$-subpath $P$ in $L$
are backward, and therefore, $P$ contributes $-1$ to $\Delta'$. Also at least
one such $P$ exists, by~(ii'). Thus, $\Delta'<0$, obtaining a contradiction
with $\Delta=\Delta'$. \hfill\qed

 \medskip
Thus, (W2) is valid. Considering lenses formed by a pair of wires and
using~\refeq{cyc-term} and~(W2), one can easily obtain~(W3). Finally, since
$|E_T(v)|\ge 3$ holds for each terminal vertex in $G_T$ (by
Corollary~\ref{cor:termv}(ii)), each cyclic face in $G_W$ is surrounded by at
least three edges, and therefore, this face cannot be a lens. So the wiring $W$
is proper.

This completes the proof of Proposition~\ref{pr:til-wir}.
  \hfill\qed\qed

\section{From proper wirings to generalized tilings}  \label{sec:wir-to-til}

In this section we complete the proof of Theorem~\ref{tm:bas-wir} by showing
the converse to Proposition~\ref{pr:til-wir}.
   \begin{prop} \label{pr:wir-til}
For a proper wiring $W$ on $Z=Z_n$, there exists a g-tiling $T$ on $Z$
such that $B_T=B_W$.
   \end{prop}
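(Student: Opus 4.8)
The plan is to construct $T$ as the planar dual of the graph $G_W$, reversing the construction used in the proof of Proposition~\ref{pr:til-wir}. Concretely, I would place a vertex of $T$ at each face $F\in\Fscr_W$, located at the point $\sum_{i\in X(F)}\xi_i$ of $Z$, where $X(F)\subseteq[n]$ is the label used in the definition of $B_W$ (the set of $i$ for which $F$ lies on the left of $w_i$). By Lemma~\ref{lm:nocopies}, which uses properness of $W$, distinct faces receive distinct labels, so these vertices are well defined and pairwise distinct. Two faces $F,F'$ sharing a piece $e^\ast$ of a wire $w_i$ are joined by an edge of $T$; since crossing $w_i$ toggles membership of $i$, one has $X(F')=X(F)\,\triangle\,\{i\}$, so this is an $i$-edge, which I orient from $X$ to $Xi$, consistently with $\pi(Xi)-\pi(X)=\xi_i$. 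Finally, at each crossing point $v$ of two wires $w_i,w_j$ with $i<j$, the four surrounding faces carry labels $X,Xi,Xj,Xij$ for a common $X$ with $i,j\notin X$; I take the corresponding $ij$-tile $\tau(X;i,j)$, coloring it white if $v$ is white and black if $v$ is black.

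The local picture at a crossing is exactly the inverse of the one analysed in the proof of Proposition~\ref{pr:til-wir}: a white point, where $w_j$ crosses $w_i$ from left to right, produces the white-tile configuration, while a black point, the root of a pair of lenses where $w_j$ crosses $w_i$ from right to left, produces the orientation-reversed, black-tile configuration. With this dictionary I would verify (T1)--(T3). Axiom~(T1) follows because each edge of $T$ is dual to a wire-piece $e^\ast$ whose endpoints are either two crossings, giving two incident tiles, or one crossing and a boundary point $s_i,s'_i$, giving a single incident boundary tile; distinct crossings give distinct tiles. For (T2) and (T3) I would read off the overlap and the entering/leaving patterns of edges from the crossing pictures together with axiom~(W2); in particular, for a black tile coming from a black point $v$, its bottom and top vertices are precisely the two faces that contain the root $v$ inside the two lenses rooted at $v$, and by axiom~(W3) these are cyclic faces, whose boundary cycles are directed (clockwise and counterclockwise, respectively). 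This yields the ``all edges leave $b(\tau)$ / all edges enter $t(\tau)$'' condition of~(T3), while properness, via Corollary~\ref{cor:termv}(ii), guarantees each such face has at least three boundary edges, so no tile is degenerate.

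It then remains to identify the terminal vertices and to establish (T4). I would declare $X(F)$ non-terminal exactly when $F$ is non-cyclic; the analysis above shows the terminal vertices are precisely the tops and bottoms of black tiles, i.e.\ the faces containing lens roots, and the bijection~(W3) between lenses and cyclic faces (each black point rooting two lenses) matches the count $|\Fscr^{cyc}_W|=2|T^b|=|V^t|$, whence $B_T=\{X(F):F\ \text{non-cyclic}\}=B_W$, as required. For axiom~(T4) the cleanest route is to invoke Remark~1 in Section~\ref{sec:Tprop}, which shows that (T4') (namely $\rho(v)=2\pi$ at every inner non-terminal vertex) is equivalent to (T4) subject to (T1)--(T3); so it suffices to check the local rotation angle at each such vertex, equivalently that the boundary walk of the corresponding non-cyclic face of $G_W$ closes up with the alternation of entering and leaving crossings dictated by~(W2). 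I expect this last step to be the main obstacle: reducing the global disc property to the purely local rotation identity, and carrying out the orientation bookkeeping at black crossings so that (T3) holds on the nose, are the delicate points, whereas the existence of the dual incidence structure and the equality $B_T=B_W$ are comparatively direct. A useful sanity check is that applying the construction of Proposition~\ref{pr:til-wir} to the resulting $T$ returns $W$, confirming that the two constructions are mutually inverse.
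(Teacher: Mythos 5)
Your construction is exactly the paper's: the same planar-dual dictionary (vertices of $T$ at the labels $X(F)$ of faces, $i$-edges dual to pieces of $w_i$, an $ij$-tile at each crossing of $w_i,w_j$, colored by the white/black status of the crossing), the same appeal to Lemma~\ref{lm:nocopies} for distinctness, and the same use of axiom~(W3) to show that each cyclic face carries exactly one black point, which is what gives~(T3) and the identification of terminal vertices with cyclic faces. Your counting argument $|\Fscr^{cyc}_W|=2|T^b|$ is a slightly looser version of the paper's direct argument (a second black point on the boundary cycle of a cyclic face $F$ would produce a second lens bijective to $F$, contradicting~(W3)), but it is in the same spirit and can be made to work.

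The one place you diverge --- and the one place you leave a genuine hole --- is axiom~(T4). You propose to route it through Remark~1, i.e.\ to verify the local rotation identity $\rho(v)=2\pi$ at every inner non-terminal vertex, and you explicitly flag this as the main unresolved obstacle. The paper avoids this entirely: it derives~(T4) directly from the planarity of $G_W$ together with property $(\ast)$ of Lemma~\ref{lm:nocopies} (each face has at most one edge of each color and all labels $X(F)$ are distinct), so that the dual complex $D_T$ inherits the disc topology of the zonogon. You should either carry out the rotation-angle computation you sketch or, better, adopt the paper's shortcut. A second caveat: in the paper, the proof of Proposition~\ref{pr:wir-til} is declared complete only once Lemma~\ref{lm:nocopies} is proved, and that lemma (an induction on $n$ showing in particular that $(w_1,\ldots,w_{n-1})$ remains a proper wiring) is where most of the technical work of this section actually lives; by invoking it as a black box you have deferred, rather than supplied, the hardest ingredient.
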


 \begin{proof}
The construction of the desired $T$ is converse, in a sense, to that described
in the proof of Proposition~\ref{pr:til-wir}; it combines planar duality
techniques and geometric arrangements.

We associate to each (inner) face $F$ of the graph $G_W$ the point (viz.
the subset) $X(F)$ in the zonogon, also denoted as $F^\ast$. These points
are just the vertices of tiles in $T$. The edges concerning $T$ are
defined as follows. Let faces $F,F'\in\Fscr_W$ have a common edge $e$
formed by a piece of a wire $w_i$, and let $F$ lie on the right from $w_i$
according to the direction of this wire (and $F'$ lies on the left from
$w_i$). Then the vertices $F^\ast,{F'}^\ast$ are connected by edge
$e^\ast$ going from $F^\ast$ to ${F'}^\ast$. Note that in view of the
evident relation $X(F')=X(F)\cup\{i\}$, the direction of $e^\ast$ matches
the edge direction for g-tilings.

The tiles in $T$ one-to-one correspond to the intersection points of wires
in $W$. More precisely, let $v$ be a common point of wires $w_i,w_j$ with
$i<j$. Then the vertex $v$ of $G_W$ has four incident edges $e_i,\bar
e_i,e_j,\bar e_j$ such that: $e_i,\bar e_i\subset w_i$; ~$e_j,\bar
e_j\subset w_j$; ~$e_i,e_j$ enter $v$; ~and $\bar e_i,\bar e_j$ leave $v$.
Also one can see that for the four faces $F$ containing $v$, the subsets
$X(F)$ are of the form $X,Xi,Xj,Xij$ for some $X\subset[n]$. The tile
surrounded by the edges $e^\ast_i,\bar e^\ast_i,e^\ast_j,\bar e^\ast_j$
connecting these subsets (regarded as points) is just the $ij$-tile in $T$
corresponding to $v$, denoted as $v^\ast$. Observe that the edges
$e_i,e_j,\bar e_i,\bar e_j$ follow in this order counterclockwise around
$v$ if $v$ is black (orientation-reversing), and clockwise otherwise (when
$v$ is ``orientation-respecting''). The tile $v^\ast$ is regarded as black
in $T$ if $v$ is black, and white otherwise. Both cases are illustrated in
the picture where the right fragment concerns the orientation-reversing
case.

 \begin{center}
  \unitlength=1mm
  \begin{picture}(140,16)
   \put(0,0){\begin{picture}(60,16)
  \put(8,8){\circle*{1}}
  \put(0,0){\vector(1,1){7.5}}
  \put(0,16){\vector(1,-1){7.5}}
  \put(8,8){\vector(1,1){7.5}}
  \put(8,8){\vector(1,-1){7.5}}
  \put(22,8){\vector(1,0){15}}
  \put(50,0){\vector(1,1){7.5}}
  \put(50,0){\vector(-1,1){7.5}}
  \put(42,8){\vector(1,1){7.5}}
  \put(58,8){\vector(-1,1){7.5}}
  \put(-2,2){$e_i$}
  \put(-2,12){$e_j$}
  \put(15,2){$\bar e_j$}
  \put(15,12){$\bar e_i$}
  \put(7,4){$v$}
  \put(41,2){$e^\ast_i$}
  \put(41,12){$e^\ast_j$}
  \put(56,2){$\bar e^\ast_j$}
  \put(56,12){$\bar e^\ast_i$}
  \put(49,7){$v^\ast$}
    \end{picture}}
   \put(80,0){\begin{picture}(60,16)
  \put(7,6.5){$\blacklozenge$}
  \put(8,8){\vector(-1,-1){7.5}}
  \put(8,8){\vector(-1,1){7.5}}
  \put(16,16){\vector(-1,-1){7.0}}
  \put(16,0){\vector(-1,1){7.0}}
  \put(22,8){\vector(1,0){15}}
\thicklines{
  \put(50,0){\vector(1,1){7.5}}
  \put(50,0){\vector(-1,1){7.5}}
  \put(42,8){\vector(1,1){7.5}}
  \put(58,8){\vector(-1,1){7.5}}
  }
  \put(-2,2.5){$\bar e_j$}
  \put(-2,12){$\bar e_i$}
  \put(15,2){$e_i$}
  \put(15,12){$e_j$}
  \put(7,3.5){$v$}
  \put(41,2){$\bar e^\ast_i$}
  \put(41,12){$\bar e^\ast_j$}
  \put(56,2){$e^\ast_j$}
  \put(56,12){$e^\ast_i$}
  \put(49,7){$v^\ast$}

    \end{picture}}
  \end{picture}
   \end{center}

Next we examine properties of the obtained collection $T$ of tiles. The
first and second conditions in~(T2) (concerning overlapping and
non-overlapping tiles with a common edge) follow from the above
construction and explanations. \smallskip

1) Consider an $i$-edge $e=(u,v)$ in $G_W$ (a piece of the wire $w_i$). If
$u\ne s_i$ and $v\ne s'_i$, then the dual edge $e^\ast$ belongs to exactly
two tiles, namely, $u^\ast$ and $v^\ast$. If $u=s_i$, then $e^\ast$
belongs to the unique tile $v^\ast$. Furthermore, for the faces
$F,F'\in\Fscr_W$ containing $e$, the sets $X(F),X(F')$ are the principal
intervals $[i-1]$ and $[i]$ (letting $[0]:=\emptyset$). This implies that
$e^\ast$ is the boundary edge $p_{i-1}p_i$ of $Z$, and this edge belongs
to a unique tile in $T$ (which is, obviously, white). Considering
$v=s'_i$, we obtain a similar property for the edges in $rbd(Z)$. This
gives the first and second condition in~(T1).

The proper wiring $W$ possesses the following important property, which
will be proved later (see Lemma~\ref{lm:nocopies}): ($\ast$) each face $F$
in $G_W$ has at most one $i$-edge for each $i$, and all sets $X(F)$ among
$F\in\Fscr_W$ are different. This implies that $T$ has no tile copies,
yielding the third condition in~(T1). Also property ($\ast$) and the
planarity of $G_W$ imply validity of axiom~(T4). \smallskip

2) For a face $F\in\Fscr_W$, let $E(F)$ denote the set of its edges not
contained in $bd(Z)$. By the construction and explanations above,
  \begin{numitem1}
the edges in $E(F)$ one-to-one correspond to the edges incident to the
vertex $v=F^\ast$ of $G_T$; moreover, for $e\in E(F)$, the corresponding
edge $e^\ast$ enters $v$ if $e$ is directed counterclockwise (around $F$),
and leaves $v$ otherwise.
  \label{eq:Fast}
  \end{numitem1}
This implies that $v$ has both entering and leaving edges if and only if $F$ is
non-cyclic, unless $v=p_0$ or $p_n$. (Here we also use an easy observation that
if $F$ contains a vertex $p_i$ or $p'_i$ for some $1\le i<n$, then $E(F)$ has
edges in both directions.)

Consider a cyclic face $F\in\Fscr_W^{cyc}$, and let
$C=(v_0,e_1,v_1,\ldots,e_r,v_r=v_0$) be its boundary cycle, where for
$p=1,\ldots,r$, the edge $e_p$ goes from $v_{p-1}$ to $v_p$. Denote the color
of $e_p$ by $i_p$. Suppose $C$ is directed clockwise. Then for each $p$, we
have $i_p<i_{p+1}$ if $v_p$ is white, and $i_p>i_{p+1}$ if $v_p$ is black
(taking the indices modulo $r$). Hence $C$ contains at least one black point
(for otherwise we would have $i_1<\ldots<i_r<i_1$). Moreover,
   \begin{numitem1}
$C$ contains exactly one black point.
   \label{eq:one_black}
   \end{numitem1}
Indeed, let $v_p$ be black. Then $v_p$ is the root of the (even) lens $L$ of
wires $w_{i_{p-1}}$ and $w_{i_p}$ such that $F\subseteq L$. By axiom~(W3), $L$
is bijective to $F$. The existence of another black vertex in $C$ would cause
the appearance of another lens bijective to $F$, which is impossible.
So~\refeq{one_black} is valid. This implies that the vertex $F^\ast$ (which has
leaving edges only, by~\refeq{Fast}) is the bottom vertex of exactly one black
tile. When $C$ is directed counterclockwise, we have $i_p>i_{p+1}$ if $v_p$ is
white, and $i_p<i_{p+1}$ if $v_p$ is black, implying~\refeq{one_black} again,
which in turn implies that $F^\ast$ is the top vertex of exactly one black
tile. Thus, $T$ obeys~(T3).
\smallskip

3) If a cyclic face $F$ and another face $F'$ in $G_W$ have a common edge
$e=(u,v)$, then $F'$ is non-cyclic. Indeed, the edge $e'$ preceding $e$ in
the boundary cycle of $F$ enters the vertex $u$. The wire in $W$ passing
through $e'$ leaves $u$ by an edge $e''$. Obviously, $e''$ belongs to
$F'$. Since the edges $e,e''$ of $F'$ have the same beginning vertex, $F'$
is non-cyclic. Hence the cyclic faces in $G_W$ are pairwise disjoint,
implying that no pair of black tiles in $T$ share an edge (the third
condition in~(T2)). \smallskip

Thus, $T$ is a g-tiling. If a face $F$ of $G_W$ lies on the left from a wire
$w_i$, then the vertices $F^\ast$ and $[n]$ occur in the same region when the
interior of the $i$-strip is removed from the disc $D_T$. This implies that the
sets $X(F)$, $F\in\Fscr_W$, are just the vertices of $T$, i.e., the full
spectra for $T$ and $W$ are the same. Now the correspondence between cyclic
faces for $W$ and terminal vertices for $T$ yields $B_T=B_W$, as required.
  \end{proof}

It remains to show the following (cf.~($\ast$) in the above proof).
  \begin{lemma} \label{lm:nocopies}
Let $W$ be a proper wiring. Then:

(i) for each face $F$ in $G_W$, all edges surrounding $F$ belong to different
wires;

(ii) there are no different faces $F,F'\in\Fscr_W$ such that $X(F)=X(F')$.
  \end{lemma}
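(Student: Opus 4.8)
The plan is to establish part (i) first and then to deduce part (ii) from it, since the statement that each face meets every wire in at most one edge is the structural fact that makes the labelling $F\mapsto X(F)$ behave like a genuine coordinatisation. Throughout I would work with the planar graph $G_W$ and with the two open regions into which a single wire $w_i$ cuts $Z$: every face $F$ lies entirely in one of them, namely on the left of $w_i$ exactly when $i\in X(F)$.

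For part (i), suppose some face $F$ has two edges $e_1,e_2$ lying on the same wire $w_i$, and assume $i\in X(F)$ (the case $i\notin X(F)$ is symmetric, reversing all wire orientations). Let $A$ be the sub-arc of $w_i$ running from $e_1$ to $e_2$, and let $P$ be the sub-path of $\partial F$ joining the corresponding endpoints on the side not containing $F$. Then $A\cup P$ bounds a closed disc $\Delta$ with $F$ lying outside $\Delta$, while the whole arc $A$ lies on $\partial\Delta$. I would then analyse how the remaining wires meet $\Delta$: since each wire runs from boundary to boundary of $Z$, any wire entering $\Delta$ must leave it, crossing $\partial\Delta=A\cup P$, and crossings with $A$ are exactly crossings with $w_i$. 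Using (W2) to control the order in which a wire $w_j$ meets $w_i$ along $A$, one isolates inside $\Delta$ a lens $L_{ij}(q)$ of $w_i$ with some wire, together with the cyclic face it must host. Comparing this with the bijection $\phi$ of (W3) and with properness (no cyclic face is a whole lens) yields a contradiction — either a lens carrying no cyclic face, or two cyclic faces assigned to one lens. The hard part will be exactly this bookkeeping inside $\Delta$: showing that the repeated contact of $w_i$ with $F$ forces a lens whose root-containing cyclic face conflicts with (W3), and that properness is genuinely used to exclude the degenerate possibility that $\Delta$ is itself a single lens.

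Given (i), I would prove (ii) by a minimal-crossing, innermost-bigon reduction. Suppose $F\neq F'$ but $X(F)=X(F')$, and join interior points of $F$ and $F'$ by a generic curve $\gamma$ meeting $\bigcup_i w_i$ transversally in the fewest possible points. Since $F$ and $F'$ lie on the same side of every wire, $\gamma$ meets each wire an even number of times; if it meets any wire at all, some wire $w_i$ is met at least twice, so $\gamma$ and $w_i$ cobound a bigon $B$. Choosing $B$ innermost and stripping off, by properness together with (W3), the nested lenses that other wires might form with $w_i$ inside $B$, I would reach a bigon whose interior meets no wire; isotoping $\gamma$ across it removes two crossings and contradicts minimality (this is also where (i) enters, since an otherwise empty innermost bigon would exhibit a single face bounded twice by $w_i$). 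Hence $\gamma$ meets no wire, forcing $F=F'$. The delicate point here, exactly as in (i), is guaranteeing that the lens-stripping terminates at a truly empty bigon, which is precisely what properness is designed to ensure.
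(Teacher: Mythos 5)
Your architecture is the reverse of the paper's: the paper derives (i) from (ii) by an easy observation (two $i$-edges of $F$ have the same orientation relative to $F$, so the two faces across them would receive the same label $X(F)\triangle\{i\}$), and then proves (ii) by induction on $n$, the engine being the non-trivial sub-claim that $W'=(w_1,\ldots,w_{n-1})$ is again a \emph{proper} wiring, after which a single face of $G_{W'}$ split by $w_n$ is analysed via one explicit lens and a violation of (W2). Your plan instead proves (i) directly and (ii) from (i) by a minimal-position curve argument, with no induction. That is a legitimate alternative strategy in principle, but as written it has a genuine gap in each half, and in both cases the gap sits exactly where you yourself flag ``the hard part'' / ``the delicate point'' --- i.e., the essential content of the lemma is asserted rather than proved.

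Concretely: in (i), you claim that the repeated contact of $w_i$ with $F$ ``isolates inside $\Delta$ a lens together with the cyclic face it must host'' conflicting with (W3), but you give no reason a lens of $w_i$ with some other wire must lie (even partially) inside $\Delta$, nor why the resulting cyclic face would be double-counted by $\phi$; without that, no contradiction has been exhibited. In (ii), the lens-stripping step does not reduce to a standard innermost-bigon argument: a wire $w_j$ may enter and leave your bigon $B$ entirely through the $w_i$-side, creating a lens of $w_i,w_j$ inside $B$ that produces \emph{no} additional intersection with $\gamma$ and hence no smaller $\gamma$-bigon, while isotoping $\gamma$ across the $w_i$-arc would now create new crossings with $w_j$, so minimality is not contradicted. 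Properness and (W3) by themselves do not remove such a lens --- properness only says further wires traverse it, which makes the configuration worse, not better. This nested-lens difficulty is precisely what the paper's induction on $n$ absorbs (by deleting $w_n$, re-verifying (W1)--(W3) and properness for the truncated wiring, and invoking the inductive hypothesis), and your sketch offers no substitute mechanism. Until the disc $\Delta$ analysis in (i) and the termination of the stripping in (ii) are actually carried out, the proposal is a plan, not a proof.
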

  \begin{proof}
Suppose that a face $F$ contains two $i$-edges $e,e'$ for some $i$. One can see
that: (a) $e,e'$ have the same direction in the boundary of $F$, and (b) the
face $F'\ne F$ containing $e$ is different from the face $F''\ne F$ containing
$e'$. Property~(a) implies $X(F')=X(F'')$. Therefore, (i) follows from (ii).

To show (ii), we use induction on $n$ (the assertion is obvious if $n=2$). Let
$W':=(w_1,\ldots,w_{n-1})$. Clearly $W'$ obeys axioms~(W1),(W2). One can see
that if none of cyclic faces of $G_{W'}$ is separated by $w_n$, then~(W3) is
valid for $W'$ as well. So suppose that some cyclic face $F$ of $G_{W'}$ is
separated by $w_n$. Let $v$ be a black vertex in $F$ (such a vertex exists, for
otherwise the edge colors along the boundary cycle of $F$ would be monotone
increasing or monotone decreasing, which is impossible). Then there is a lens
$L$ for $W'$ such that $F\subseteq L$ and $v$ is the root of $L$. Take the face
$\tilde F$ of $G_W$ such that $v\in \tilde F\subset F$. By~(W3) for $W$,
~$\tilde F$ is cyclic (since $L$ and $v$ continue to be a lens and its root
when $w_n$ is added to $W'$). Assume that the boundary cycle $C$ of $\tilde F$
is directed clockwise (when $C$ is directed counterclockwise, the argument is
similar). Since $\tilde F\ne F$, ~$C$ should contain consecutive edges
$e'=(u',u),e''=(u,u'')$  with colors $n$ and $i<n$, respectively. Then the wire
$w_i$ crosses $w_n$ at $u$ from left to right. This implies that the vertex $u$
is black and there is an $in$-lens $L'$ rooted at $u$ and containing $\tilde
F$. But $\tilde F$ is bijective to $L$; a contradiction.

Thus, $W'$ is a wiring (obeying (W1)--(W3)). We first prove~(ii) in the
assumption that $W'$ is proper. Then by induction all sets $X(F)$, $F\in
\Fscr_{W'}$, are different. Suppose that there are different faces $\tilde
F,\tilde F'\in\Fscr_W$ such that $X(\tilde F)=X(\tilde F')$. Let $F$ and $F'$
be the faces for $W'$ containing $\tilde F$ and $\tilde F'$, respectively. Then
$X(F)=X(\tilde F)-\{n\}$ and $X(F')=X(\tilde F')-\{n\}$. This implies
$X(F)=X(F')$, and therefore $F=F'$. Furthermore, $w_n$ separates $F$ at most
twice (for otherwise we would have $F=\tilde F\cup\tilde F'$, which implies
$X(\tilde F)\ne X(\tilde F')$).

It follows that $w_n$ and the boundary of $F$ have two common points $u,v$ such
that: (a) $u$ occurs in $w_n$ earlier than $v$, and (b) the piece $P$ of $w_n$
between $u$ and $v$ (not including $u,v$) lies outside $F$. Let $Q$ be the part
of the boundary of $F$ between $u$ and $v$ such that the simply connected
region $\Omega$ surrounded by $P$ and $Q$ is disjoint from the interior of $F$.
Consider the case when $P$ goes clockwise around $\Omega$; see the picture.
 \begin{center}
  \unitlength=1mm
  \begin{picture}(100,55)
  \put(17,8){\circle*{1}}
  \put(70,20){\circle*{1}}
  \put(80,17.5){\circle*{1}}
  \put(80,17.5){\vector(-4,1){9.5}}
  \qbezier(0,0)(25,15)(70,20)
  \qbezier(80,17.5)(95,10)(100,0)
  \qbezier(15,2)(40,90)(80,17.5)
  \qbezier(70,20)(30,30)(25,44)
  \put(80,17.5){\vector(1,-2){4}}
  \put(25,44){\vector(-1,4){1}}
  \put(71,32){\circle*{1}}
  \put(63,28){\circle*{1}}
  \put(63,28){\vector(2,1){7.5}}
  \qbezier(20,31)(35,20)(63,28)
  \put(64.5,26){\circle*{0.7}}
  \put(66,24){\circle*{0.7}}
  \put(68,22){\circle*{0.7}}
  \put(14,8){$u$}
  \put(55,5){$F$}
  \put(42,12){$Q$}
  \put(75,16){$e$}
  \put(81,18){$v$}
  \put(17,28){$w_j$}
  \put(70,23){$F'$}
  \put(65,30){$e'$}
  \put(22,40){$w_i$}
  \put(32,18){$\Omega$}
  \put(17,18){$P$}
  \put(80,8){$w_n$}
  \put(26.5,28.5){$y$}
  \put(31,37){$x$}
  \put(72,32.5){$z$}
  \end{picture}
   \end{center}

Let $e$ be the edge in $E_W$ contained in $Q$ and incident to $v$; then $e$ has
color $i<n$. Take the maximal connected piece $Q'$ of $w_i$ lying in $\Omega$
and containing $e$. Since $w_i$ does not meet the interior of $F$, the end $x$
of $Q'$ different from $v$ lies on $P$. Then $Q$ and the piece $P'$ of $P$ from
$x$ to $v$ form an $in$-lens $L$ for $W$. Since $P'$ is directed from $x$ to
$v$, ~$Q'$ must be directed from $v$ to $x$ (by~(W2)); in particular, $e$
leaves $v$. So $w_n$ crosses $w_i$ at $v$ from right to left, and therefore,
the vertex $v$ is black and is the root of $L$. Let $F'$ be the cyclic face in
$G_W$ lying in $L$ and containing $v$, and let $C$ be its boundary cycle. Since
$W$ is proper, $F'\ne L$, whence $C\ne P'\cup Q'$. Then $C$ contains an edge
$e'$ with color $j\ne i,n$ (one can take as $e'$ the edge of $C$ that either
succeeds $e$ or precedes the last edge on $P$). Take the maximal connected
piece $R$ of $w_j$, from a point $y$ to a point $z$ say, that lies in $\Omega$
and contains $e'$. It is not difficult to realize that $y$ occurs in $P$
earlier than $z$. This violates~(W2) for $w_j,w_n$.

When $P$ goes counterclockwise around $\Omega$, a contradiction is shown in a
similar way. (In this case, we take as $e$ the edge on $Q$ incident to $u$; one
shows that $e$ enters $u$, whence the vertex $u$ is black.)
\smallskip

Finally, we assert that the wiring $W'$ is always proper. Indeed, suppose this
is not so and consider an ``empty'' $ij$-lens $L$ (where $i<j$), i.e., forming
a face in $G_{W'}$. One may assume that $L$ is an odd lens with lower vertex
$u$ and upper vertex $v$ (the case of an even ``empty'' lens is examined in a
similar way). Then $v$ is black (the root of $L$), and the boundary of $L$ is
formed by the piece $P$ of $w_i$ from $u$ to $v$ and the piece $Q$ of $w_j$
from $v$ to $u$ (giving the edges in $G_{W'}$ connecting $u$ and $v$). Consider
the (cyclic) face $F$ in $G_W$ lying in $L$ and containing $v$, and let $C$ be
its boundary cycle. Since $W$ is proper, $F\ne L$. Let $e=(x,v)$ be the
$i$-edge in $P$ entering $v$, and $e'=(v,y)$ the $j$-edge in $Q$ leaving $v$.
Besides these, the cycle $C$ contains some $n$-edge $e''$. Note that $e''$
cannot connect two points on $P$ or two points on $Q$, for otherwise there
would appear an ``empty'' $in$- or $jn$-lens. This implies that $e''$ goes from
$y$ to $x$ (respecting the direction on $C$). But then $w_n$ crosses $w_j$ from
right to left, whence the vertex $y$ is black. So the face $F$ contains two
black vertices, contradicting~\refeq{one_black}.

Thus, Lemma~\ref{lm:nocopies} is proven, and this completes the proof of
Proposition~\ref{pr:wir-til}. \hfill \qed
  \end{proof}

Propositions~\ref{pr:til-wir} and~\ref{pr:wir-til} imply the desired equality
$\Bscr\Tscr_n=\Bscr\Wscr_n$, and now Theorem~\ref{tm:bas-wir} follows from
Theorem~\ref{tm:bas-til}. Analyzing the transformation of a g-tiling into a
proper wiring described in Section~\ref{sec:til-to-wir} and the converse
transformation described above, one can conclude that their composition returns
the initial g-tiling (or the initial proper wiring). This implies the following
result (where, as before, $B_T$ and $\hat B_T$ stand for the effective and full
spectra of a g-tiling $T$, respectively, and similarly for wirings).
   \begin{theorem} \label{tm:TWbiject}
There is a bijection $\beta$ of the set ${\bf T}_n$ of g-tilings to the set
${\bf W}_n$ of proper wirings on $Z_n$ such that $B_T=B_{\beta(T)}$ holds for
each $T\in{\bf T}_n$. Furthermore, for each proper wiring $W$, all subsets
$X(F)\subseteq[n]$ determined by the faces $F$ for $W$ are different, and one
holds $\hat B_W=\hat B_T$, where $T=\beta^{-1}(W)$.
   \end{theorem}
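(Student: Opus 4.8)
The plan is to recognize that the two constructions of the previous two sections---the g-tiling-to-wiring map of Proposition~\ref{pr:til-wir} and the wiring-to-g-tiling map of Proposition~\ref{pr:wir-til}---are mutually inverse planar-duality operations, and to verify that their composition in either order is the identity. I will write $\beta\colon{\bf T}_n\to{\bf W}_n$ for the map of Proposition~\ref{pr:til-wir} (well defined and landing in proper wirings by that proposition) and $\delta\colon{\bf W}_n\to{\bf T}_n$ for the map of Proposition~\ref{pr:wir-til}. The spectrum equality $B_T=B_{\beta(T)}$ is already the conclusion of Proposition~\ref{pr:til-wir}, so it will suffice to establish $\delta\circ\beta=\mathrm{id}_{{\bf T}_n}$ and $\beta\circ\delta=\mathrm{id}_{{\bf W}_n}$; then $\beta$ is a bijection with inverse $\delta$.

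First I would verify $\delta\circ\beta=\mathrm{id}$. Fixing a g-tiling $T$ and setting $W=\beta(T)$ (realized by the graph $G_\zeta$ on $D_T$), I recall the three correspondences built into $\beta$: each tile $\tau$ becomes the crossing $c(\tau)$, each vertex $v$ of $G_T$ becomes a face $v^\ast$, and each $i$-edge $e$ becomes its dual $e^\ast$. Applying $\delta$ then assigns to the face $v^\ast$ the point $X(v^\ast)$, and the strip-removal argument in the proof of Proposition~\ref{pr:til-wir} already shows $i\in X(v^\ast)$ iff $i\in v$, so $X(v^\ast)=v$ as a subset; thus $\delta$ returns the original vertices. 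Likewise each crossing $c(\tau)$, where $\tau$ is an $ij$-tile with surrounding vertices $X,Xi,Xj,Xij$, is recovered by $\delta$ as the $ij$-tile on those vertices, namely $\tau$ itself, and the white/black coloring is preserved because both maps match white tiles with orientation-respecting crossings and black tiles with orientation-reversing ones. Hence $\delta(\beta(T))=T$.

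Next I would verify $\beta\circ\delta=\mathrm{id}$. For a proper wiring $W$ with $T=\delta(W)$, the $i$-edges of $G_T$ are precisely the duals of the pieces of $w_i$, so they assemble into the $i$-strip $Q_i$, whose tiles are exactly the crossings lying on $w_i$. Applying $\beta$, the curve $\zeta_i$ threads the central points $c(\tau)$ of the tiles of $Q_i$, that is, the crossings of $w_i$ in their original order, recovering $w_i$; hence $\beta(\delta(W))=W$.

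The hard part will be the bookkeeping of edge orientations and, especially, of the orientation reversal at black tiles and crossings, to ensure the double application returns the original object exactly rather than up to a reflection. I expect this to be controlled by~\refeq{cyc-term}---terminal vertices correspond to cyclic faces, with the clockwise/counterclockwise distinction matching the bottom/top vertices of black tiles---together with the black-to-black matching already noted; once these are secured, the remaining checks are routine. Finally, the two supplementary claims are immediate: distinctness of the sets $X(F)$, $F\in\Fscr_W$, for a proper $W$ is Lemma~\ref{lm:nocopies}(ii), and the equality $\hat B_W=\hat B_T$ for $T=\beta^{-1}(W)=\delta(W)$ is exactly the assertion, proved at the close of Proposition~\ref{pr:wir-til}, that the sets $X(F)$ are precisely the full vertex set of $T$.
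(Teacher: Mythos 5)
Your proposal is correct and follows essentially the same route as the paper, which likewise derives the theorem by observing that the constructions of Propositions~\ref{pr:til-wir} and~\ref{pr:wir-til} are mutually inverse (the paper in fact only asserts this in one sentence, so your explicit verification of $\delta\circ\beta=\mathrm{id}$ and $\beta\circ\delta=\mathrm{id}$ via the vertex/face, tile/crossing, and edge/dual-edge correspondences supplies more detail than the original). Your appeals to Lemma~\ref{lm:nocopies}(ii) for the distinctness of the $X(F)$ and to the closing paragraph of Proposition~\ref{pr:wir-til} for $\hat B_W=\hat B_T$ are exactly the intended references.
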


We conclude this section with several remarks and additional results. \medskip

 \noindent{\bf Remark 4}
~As is shown in the proof of Lemma~\ref{lm:nocopies}, for any proper wiring
$W=(w_1,\ldots,w_n)$, the set $W'=(w_1,\ldots,w_{n-1})$ forms a proper wiring
as well (concerning the zonogon $Z_{n-1}$). Clearly a similar result takes
place when we remove the wire $w_1$. As a generalization, we obtain that for
any $1\le i<j\le n$, the set $(w_i,\ldots,w_j)$ forms a proper wiring on the
corresponding subzonogon. One can see that removing $w_n$ from $W$ corresponds
to the contracting operation concerning $n$ in the g-tiling $\beta^{-1}(W)$,
and this results in the set of tiles corresponding to $W'$. This gives the
following important result to which we have appealed in
Section~\ref{sec:bas-to-til}.
  \begin{corollary}  \label{cor:contract}
For a g-tiling $T$ on $Z_n$, its $n$-contraction $T/n$ is a (feasible)
g-tiling on $Z_{n-1}$.
  \end{corollary}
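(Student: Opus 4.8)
The plan is to derive the corollary from the tiling--wiring bijection $\beta$ of Theorem~\ref{tm:TWbiject} together with the fact, established in the proof of Lemma~\ref{lm:nocopies}, that deleting the last wire from a proper wiring again yields a proper wiring. Concretely, set $W:=\beta(T)$, a proper wiring on $Z_n$. As recorded in Remark~4, the truncation $W':=(w_1,\ldots,w_{n-1})$ is a proper wiring on the zonogon $Z_{n-1}$. Applying Proposition~\ref{pr:wir-til} (equivalently, the bijection $\beta$ at level $n-1$) to $W'$ produces a \emph{feasible} g-tiling $T':=\beta^{-1}(W')$ on $Z_{n-1}$, obeying (T1)--(T4). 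It then remains only to identify $T'$ with the combinatorially defined contraction $T/n$; once this is done, the corollary follows at once.

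The identification rests on the crossing--face duality used in the proof of Proposition~\ref{pr:wir-til}. First I would record how deleting $w_n$ acts on the planar graph $G_W$: it removes exactly the crossing points lying on $w_n$, and it merges precisely those pairs of faces of $G_W$ that are separated by a $w_n$-edge, leaving every other crossing and its local four-edge picture untouched. Under $\beta$, the $n$-strip $Q_n$ of $T$ corresponds to the wire $w_n$, so the crossings on $w_n$ are the images of the tiles in $T_n^0$ (the tiles of $Q_n$); hence these tiles disappear in $T'$, matching the first step in forming $T/n$. Every crossing $v$ of two wires $w_i,w_j$ with $i<j<n$ survives, so it still produces a tile $v^\ast$ in $T'$, and its colour (black iff $v$ is orientation-reversing) is preserved---exactly as the contraction inherits colours from $T$.

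Next I would track the vertex labels via~\refeq{Fast}. For a face $F'$ of $G_{W'}$ and any face $F$ of $G_W$ contained in it, membership of an index $i<n$ in the label is decided by the side of $w_i$ on which the face lies, and this side is unchanged by removing $w_n$; moreover two faces of $G_W$ merged across a $w_n$-edge have labels differing exactly in the element $n$. Consequently $X_{W'}(F')=X_W(F)-\{n\}$ is well defined. Thus for a surviving crossing $v$ of $w_i,w_j$ whose four surrounding faces carry the labels $X,Xi,Xj,Xij$ in $G_W$, the tile $v^\ast$ of $T'$ is $\tau(X-\{n\};i,j)$, whereas in $T$ it is $\tau(X;i,j)$. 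This is precisely the contraction rule: if $n\notin X$ then $\tau(X;i,j)\in T_n^-$ is kept (as $X-\{n\}=X$), and if $n\in X$ then $\tau(X;i,j)\in T_n^+$ is shifted by $-\eps_n$. Combining this with the disappearance of $T_n^0$ gives $T'=T/n$ as coloured collections of tiles, so $T/n=\beta^{-1}(W')$ is a feasible g-tiling.

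The main obstacle will be the bookkeeping in this last identification: one must verify that the faces of $G_{W'}$ are exactly the unions of faces of $G_W$ glued along $w_n$-edges---so that no spurious further merging occurs and the label $X-\{n\}$ is genuinely well defined---and that this gluing realises precisely the three-way split of $T$ into $T_n^0,T_n^-,T_n^+$ together with the $-\eps_n$ shift on $T_n^+$. Everything else---properness of $W'$, feasibility of $\beta^{-1}(W')$, and the preservation of colours---is supplied by Lemma~\ref{lm:nocopies}, Proposition~\ref{pr:wir-til}, and the explicit duality, respectively.
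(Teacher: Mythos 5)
Your proposal is correct and follows essentially the same route as the paper: the paper proves this corollary (in Remark~4, anticipated in Subsection~\ref{ssec:part3}) precisely by taking $W=\beta(T)$, invoking the proof of Lemma~\ref{lm:nocopies} to see that $W'=(w_1,\ldots,w_{n-1})$ is again a proper wiring, and identifying $\beta^{-1}(W')$ with $T/n$ via the crossing--face duality. Your write-up merely spells out the face-merging/label bookkeeping for that last identification more explicitly than the paper's one-line ``one can see that'' remark, which is a harmless (indeed welcome) elaboration rather than a different argument.
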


 \noindent{\bf Remark 5}
~Properties of g-tilings and proper wirings established during the proofs of
Theorems~\ref{tm:bas-wir} and~\ref{tm:bas-til} enable us to obtain the
following result saying that these objects are determined by their spectra.
   \begin{theorem} \label{tm:unique}
For each semi-normal basis $B$, there are a unique g-tiling $T$ and a unique
proper wiring $W$ such that $B=B_T=B_W$.
   \end{theorem}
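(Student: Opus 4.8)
The plan is to separate existence from uniqueness and then to reduce the entire statement to a single injectivity assertion about g-tilings. Existence of a g-tiling $T$ with $B_T=B$ is precisely the second assertion of Theorem~\ref{tm:bas-til}, and existence of a proper wiring $W$ with $B_W=B$ is the second assertion of Theorem~\ref{tm:bas-wir}. For uniqueness I would exploit the spectrum-preserving bijection $\beta\colon{\bf T}_n\to{\bf W}_n$ of Theorem~\ref{tm:TWbiject}: once we know that the map $T\mapsto B_T$ is injective on g-tilings, any proper wiring $W'$ with $B_{W'}=B$ satisfies $B_{\beta^{-1}(W')}=B$, which forces $\beta^{-1}(W')=\beta^{-1}(W)$ and hence $W'=W$. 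Thus everything follows once I show that a g-tiling is determined by its spectrum.

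To prove this injectivity I would use strong induction on $N(B):=\sum_{X\in B}|X|$, where $B$ ranges over semi-normal bases (equivalently, over spectra of g-tilings, by Theorem~\ref{tm:bas-til}). The key observation is that feasibility of a W-configuration $CW(X;i,j,k)$ — namely, that $Xi,Xk,Xij,Xik,Xjk$ all lie in $B$ — is a property of $B$ alone, independent of any realizing g-tiling. This splits the argument into two cases. First, suppose $B$ admits no feasible W-configuration. Then by the contrapositive of Proposition~\ref{pr:downflip}, no g-tiling $T$ with $B_T=B$ can contain a black tile, so every such $T$ is a pure tiling; since a pure tiling has no terminal vertices, its spectrum coincides with its full vertex set, and pure tilings are determined by their vertex sets via the classical correspondence with normal bases of~\cite{DKK-08,HS}. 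Hence $T$ is unique. These are exactly the base cases of the induction, and they are reached because each reduction below strictly decreases $N(B)\ge 0$.

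In the remaining case $B$ admits a feasible W-configuration $CW(X;i,j,k)$. Given two g-tilings $T_1,T_2$ with $B_{T_1}=B_{T_2}=B$, I would apply to each the lowering flip of Proposition~\ref{pr:Tflip} with respect to this configuration, obtaining g-tilings $T_1',T_2'$ with the common spectrum $B':=(B-\{Xik\})\cup\{Xj\}$. Since $B'$ is again semi-normal and $N(B')=N(B)-1$, the induction hypothesis yields $T_1'=T_2'$. The decisive ingredient is then the reversibility recorded in Remark~2 of Section~\ref{ssec:part1}: the raising flip with respect to $CM(X;i,j,k)$, applied to $T_i'$, returns $T_i$. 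Because this raising flip is a deterministic local operation on a g-tiling and $T_1'=T_2'$ as g-tilings, applying it to this common object gives $T_1=T_2$, completing the induction and hence the proof.

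I expect the load-bearing point to be exactly this reversibility step: one must be sure that the lowering flip of Proposition~\ref{pr:Tflip} and the raising flip are genuinely mutually inverse operations at the level of g-tilings, not merely of spectra, so that equality of the flipped tilings propagates back to equality of the originals. This is supplied by Remark~2, and it is the one non-routine link in the chain; the pure-tiling base case, although essential, rests on already established rigidity (Proposition~\ref{pr:downflip} together with the known bijection between pure tilings and normal bases) and is comparatively mechanical.
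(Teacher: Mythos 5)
Your proposal is correct and follows essentially the same route as the paper: reduce to uniqueness of the g-tiling via the bijection $\beta$ of Theorem~\ref{tm:TWbiject}, induct on $\sum_{X\in B}|X|$, handle the pure-tiling case directly, and otherwise flip both tilings down along a common feasible W-configuration and invoke the reversibility of Remark~2 to pull the equality back up. The only cosmetic difference is that you split cases on whether $B$ itself admits a feasible W-configuration rather than on whether a realizing tiling has a black tile, which is an equivalent reformulation via Proposition~\ref{pr:downflip}.
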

  \begin{proof}
~Due to Theorem~\ref{tm:TWbiject}, it suffices to prove this uniqueness
property for g-tilings. We apply induction on $h(B):=\sum(|X|\colon X\in B)$.
Suppose there are different g-tiles $T,T'$ with $B_T=B_{T'}=:B$. This is
impossible when none of $T,T'$ has black tiles. Indeed, the vertices of $G_T$
and $G_{T'}$ (which are the sets in $B$) are the same and they determine the
edges of these graphs, by~\refeq{edge}. So $G_T=G_{T'}$. This graph is planar
and subdivides $Z_n$ into little parallelograms, which are just the tiles in
$T$ and the tiles in $T'$. Then $T=T'$. Now let $T$ (say) have a black tile. By
Proposition~\ref{pr:downflip}, $T$ has a feasible W-configuration
$CW(X;i,j,k)$, and we can make the corresponding lowering flip for $T$,
obtaining a g-tiling $\tilde T$ with $B_{\tilde T}= (B-\{Xik\})\cup\{Xj\}$.
Since $B_T=B_{T'}$, ~$CW(X;i,j,k)$ is a feasible W-configuration for $T'$ as
well, and making the corresponding lowering flip for $T'$, we obtain a g-tiling
$\tilde T'$ such that $B_{\tilde T'}=B_{\tilde T}$. We have $h(B_{\tilde
T})<h(B)$, whence, by induction, $\tilde T=\tilde T'$. But the raising flip in
$\tilde T$ w.r.t. the (feasible) M-configuration $CM(X;i,j,k)$ returns $T$, as
mentioned in Remark~2 in Subsection~\ref{ssec:part1}. Hence $T=T'$; a
contradiction.
  \end{proof}

\noindent One can develop an efficient procedure that, given the spectrum $B_T$
of a g-tiling $T$, restores $T$ itself (in essence, the procedure uses only
``local'' operations). This is provided by the possibility of constructing the
graph $G_T$, as follows. We know that $B_T$ is the set of non-terminal vertices
of $G_T$, and the edges connecting these vertices are of the form $(X,Xi)$ for
all corresponding $X,i$; let $G'$ be the graph formed by these vertices and
edges. The goal is to construct the terminal vertices (if any) and the
remaining edges of $G_T$ (in particular, obtaining the full spectrum $\hat
B_T$). This relies on the observation that each terminal vertex $X$ one-to-one
corresponds to a maximal collection $\Yscr\subset B_T$ such that $|\Yscr|\ge 3$
and: either~(a) each $Y\in\Yscr$ satisfies $Y=Xi$ for some $i$ and at least one
member of $\Yscr$ has no entering edge in $G'$; or~(b) each $Y\in\Yscr$
satisfies $Y=X-\{i\}$ for some $i$ and at least one member of $\Yscr$ has no
leaving edge in $G'$. In case~(a), $X$ is the bottom vertex of a black tile
$\tau$, and $E_T(X)=\{(X,Y)\colon Y\in\Yscr\}$. In case~(b), $X$ is the top
vertex of a black tile $\tau$, and $E_T(X)=\{(Y,X)\colon Y\in\Yscr\}$. So, by
extracting all such collections $\Yscr$, we are able to obtain the whole $G_T$.
Now to construct the tiles of $T$ is easy (using~\refeq{mixedvert}). A slightly
modified efficient procedure can be applied to an arbitrary $B\subset 2^{[n]}$
to decide whether or not $B$ is a semi-normal basis. \medskip

  \noindent{\bf Remark 6}~
The authors can propose an alternative method of proving
Theorems~\ref{tm:bas-wir} and~\ref{tm:bas-til} in which the former theorem is
proved directly and then the latter is obtained via the relationship of
g-tilings and proper wirings established in this and previous sections. (Other
possible methods: prove the first (second) assertion in
Theorem~\ref{tm:bas-wir} and the second (resp. first) assertion in
Theorem~\ref{tm:bas-til}.) The alternative method is based on ideas and
techniques different from those applied in
Sections~\ref{sec:bas-to-til},\ref{sec:til-to-bas}: the former extensively
exploit Jordan curve theorem, while the latter typically appeal to the fact
that the graph of a g-tiling is graded for each color. For some illustration,
we outline how the lowering flip is viewed on the language of wirings, for
simplicity considering the situation corresponding to Case~1(a) in the proof of
Proposition~\ref{pr:Tflip}. Here we handle three wires $w_i,w_j,w_k$ of a
wiring $W$ such that $i<j<k$ and there are five non-cyclic faces $A,B,C,D,E$
whose local configuration is as illustrated on the left fragment of the picture
below. The sets $X(A),X(B),X(C),X(D),X(E)$ are, respectively,
$Xij,Xi,Xik,Xk,Xjk$, the face $C$ looks like a triangle, and no other wire in
$W$ traverses some open neighborhood $\Omega$ of $C$.

 \begin{center}
  \unitlength=1mm
  \begin{picture}(120,25)
   \put(0,0){\begin{picture}(70,25)
  \put(0,17){\vector(1,0){33}}
  \put(12,0){\vector(2,3){17}}
  \put(5,25){\vector(2,-3){17}}
  \put(0,20){$A$}
  \put(5,7){$B$}
  \put(16,12){$C$}
  \put(26,7){$D$}
  \put(30,20){$E$}
  \put(23,24){$w_i$}
  \put(33,14){$w_j$}
  \put(23,0){$w_k$}
  \put(50,12){\vector(1,0){15}}
    \end{picture}}
   \put(80,0){\begin{picture}(40,25)
  \put(0,8){\vector(1,0){33}}
  \put(5,0){\vector(2,3){17}}
  \put(12,25){\vector(2,-3){17}}
  \put(5,15){$A$}
  \put(0,2){$B$}
  \put(15,10){$C'$}
  \put(30,2){$D$}
  \put(25,15){$E$}
    \end{picture}}
  \end{picture}
   \end{center}
The lowering flip replaces $Xik$ by $Xj$. This corresponds to a deformation of
the wire $w_j$ within $\Omega$ which makes it pass below the intersection point
of $w_i$ and $w_k$, as illustrated on the right fragment of the picture. The
triangle-shaped face $C'$ arising instead of $C$ satisfies $X(C')=Xj$ and is
non-cyclic (as well as the modified $A,B,D,E$).

Note that if, in the initial wiring $W$, the face above $C$ is also
triangle-shaped (i.e., the wires $w_i,w_k$ form a lens $L$ containing $C$ and
such that $w_j$ is the unique wire going across $L$), then the above flip turns
$L$ into a face and the wiring becomes non-proper. In this case the flip should
be followed by the corresponding operation \intlens$\to$\apartlens which
eliminates $L$ and makes the wiring proper again. Such a transformation of $W$
corresponds to that in Case~2(a') in the proof of Proposition~\ref{pr:Tflip}.


 \section{$n$-contraction and $n$-expansion} \label{sec:contr-exp}

In Subsection~\ref{ssec:part3} we introduced the $n$-contraction operation for a
g-tiling on the zonogon $Z_n$. In this section we examine this operation more
systematically. Then we introduce and study a converse operation that transforms
a g-tiling on $Z_{n-1}$ into a g-tiling on $Z_n$. The results obtained here
(which are interesting by its own right) will be essentially used in
Section~\ref{sec:wsf}.

Consider a g-tiling $T$ on $Z_n$. Let $P$ be the reversed path to the right
boundary $R_Q$ of the $n$-strip $Q$. It possesses a number of important features,
as follows:
  \begin{numitem1}
For the path $P=(v_0,e_1,v_1,\ldots,e_r,v_r)$ as above and the colors
$i_1,\ldots,i_r$ of its edges $e_1,\ldots,e_r$, respectively, the following
hold:
  \begin{itemize}
 \item[(i)]
$P$ begins at the minimal point $p_0$ of $Z_n$ and ends at $p_{n-1}$;
 \item[(ii)]
none of $v_0,\ldots,v_r$ is the top or bottom vertex of a black $ij$-tile with
$i,j<n$;
 \item[(iii)]
$P$ has no pair of consecutive backward edges;
 \item[(iv)]
if $e_q=(v_{q-1},v_q)$ and $e_{q+1}=(v_{q+1},v_q)$ (i.e., $e_q$ is forward and
$e_{q+1}$ is backward in $P$), then $i_q>i_{q+1}$;
 \item[(v)]
if $e_q=(v_q,v_{q-1})$ and $e_{q+1}=(v_q,v_{q+1})$ (i.e., $e_q$ is backward and
$e_{q+1}$ is forward in $P$), then $i_q<i_{q+1}$.
  \end{itemize}
   \label{eq:legal}
   \end{numitem1}

Indeed, the first and last edges of $Q$ are $p_{n-1}p_n$ and $p_0p'_{n-1}$,
yielding~(i). Property~(ii) follows from the facts that each vertex $v_q$ has
an incident $n$-edge (which belongs to $Q$) and that all edges incident to the
top or bottom vertex of a black $ij$-tile have colors between $i$ and $j$ (see
Corollary~\ref{cor:termv}(iii)). The forward (backward) edges of $P$ are the
backward (resp. forward) edges of $R_Q$. Therefore, each forward (backward)
edge $e_q$ of $P$ belongs to a white (resp. black) $i_q n$-tile, taking into
account the maximality of color $n$; cf.~\refeq{strip_boundary}. Then for any
two consecutive edges $e_q,e_{q+1}$, at least one of them is forward,
yielding~(iii) (for otherwise the vertex $v_q$ is terminal and has both
entering and leaving edges, which is impossible). Next, let $\tau$ be the $i_q
n$-tile (in $Q$) containing $e_q$, and $\tau'$ the $i_{q+1} n$-tile containing
$e_{q+1}$. If $e_q$ is forward and $e_{q+1}$ is backward in $P$, then $\tau'$
is black, $v_q$ is the left vertex of $\tau'$, and the $i_q$-edge $e$ opposite
to $e_q$ in $\tau$ enters the top vertex of $\tau'$. Since $e$ lies in the cone
of $\tau'$ at $t(\tau')$, we have $i_{q+1}<i_q<n$, as required is~(iv). And if
$e_q$ is backward and $e_{q+1}$ is forward, then $\tau$ is black and $v_q$ is
its bottom vertex. Since $e_{q+1}$ lies in the cone of $\tau$ at $b(\tau)$, we
have $i_q<i_{q+1}<n$, as required in~(v). \medskip

Recall that the $n$-contraction operation applied to $T$ shrinks the $n$-strip
in such a way that $R_Q$ merge with the left boundary $L_Q$ of $Q$.
From~\refeq{legal}(ii) it follows that in the resulting g-tiling $T'$ on
$Z_{n-1}$, the path $P$ as above no longer contains terminal vertices at all.

Next we describe the converse operation that transforms a pair consisting of an
arbitrary g-tiling $T'$ on $Z_{n-1}$ and a certain path in $G_{T'}$ into a
g-tiling on $Z_n$. To explain the construction, we first consider an arbitrary
simple path $P$ in $G_{T'}$ which begins at $p_0$, ends at the maximal point
$p_{n-1}$ of $Z_{n-1}$, and may contain backward edges. Since the graph $G_{T'}$
has a planar layout (without intersection of non-adjacent edges) in the disc
$D_{T'}$), the path $P$ subdivides $G_{T'}$ into two connected subgraphs
$G'=G'_P$ and $G''=G''_P$ such that: $G'\cup G''=G_{T'}$, ~$G'\cap G''=P$, ~$G'$
contains $\ell bd(Z_{n-1})$, and $G''$ contains $rbd(Z_{n-1})$; we call $G'$
~($G''$) the {\em left} (resp. {\em right}) {\em subgraph} w.r.t. $P$. Then each
tile of $T'$ becomes a face of exactly one of $G',G''$ (and all inner faces of
$G',G''$ are such), and for an edge $e$ of $P$ not in $bd(Z_{n-1})$, the two
tiles sharing $e$ occur in different subgraphs. So $T'$ is partitioned into two
subsets, one being the set of faces of $G'$, and the other of $G''$.

The $n$-{\em expansion operation} for $(T',P)$ disconnects $G',G''$ by cutting
$G_{T'}$ along $P$ and then glue them by adding the corresponding $n$-strip. More
precisely, we shift the vertices of $G''$ by the vector $\xi_n$, i.e., each
vertex $X$ in it changes to $Xn$; this induces the corresponding shift of edges
and tiles in $G''$. The vertices of $G'$ preserve. So each vertex $X$ occurring
in the path $P$ produces two vertices, namely, $X$ and $Xn$. As a result, for
each edge $e=(X,Xi)$ of $P$, there appears its copy $\tilde e=(Xn,Xin)$ in the
shifted $G''$; we connect $e$ and $\tilde e$ by the corresponding (new)
$in$-tile, namely, by $\tau(X;i,n)$. This added tile is colored white if $e$ is a
forward edge of $P$, and black if $e$ is backward. The colors of all old tiles
preserve.

We refer to the resulting set $T$ of tiles, with the partition into white and
black ones, as the $n$-{\em expansion of $T'$ along $P$}. Since the right
boundary of the shifted $G''$ becomes the part of $rbd(Z_n)$ from the point
$p'_{n-1}$ (=$\{n\}$) to $p_n$ (=$[n]$), it follows that the union of the tiles
in $T$ is $Z_n$. Also it is easy to see that the shape $D_T$ in $conv(2^{[n]})$
associated to $T$ is again a disc (as required in axiom~(T4)), and that $T$
obeys axiom~(T1). The path $P$ generates the $n$-strip $Q$ for $T$ (consisting
of the added $\ast n$-tiles and the edges of the form $(X,Xn)$), and we observe
that $R_Q=P^{rev}$ and that $L_Q$ is the shift of $P^{rev}$ by $\xi_n$.
Therefore, the $n$-contraction operation applied to $T$ returns $T'$.

To ensure validity of the remaining axioms~(T2) and~(T3), we have to impose
additional conditions on the path $P$. In fact, they are similar to those
exposed in~\refeq{legal}. Moreover, these conditions are necessary and
sufficient.
   \begin{lemma} \label{lm:leg-exp}
Let $P=(p_0=v_0,e_1,v_1,\ldots,e_r,v_r=p_{n-1})$ be a simple path in $G_{T'}$.
Then the following are equivalent:

(i) the $n$-expansion $T$ of ~$T'$ along $P$ is a (feasible) g-tiling on $Z_n$;

(ii) $P$ contains no terminal vertices for $T'$ and
satisfies~\refeq{legal}(iii),(iv),(v).
   \end{lemma}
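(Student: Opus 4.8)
The plan is to prove Lemma~\ref{lm:leg-exp} by establishing both implications. Since the $n$-expansion construction already guarantees axioms~(T1) and~(T4) for any simple path $P$ from $p_0$ to $p_{n-1}$ (as noted in the text preceding the lemma), the content of the equivalence lies entirely in axioms~(T2) and~(T3), which concern the white/black coloring, overlapping of tiles sharing an edge, and the local structure at terminal vertices. I would organize the argument as: first, \emph{necessity} (i)$\to$(ii), by showing that if any condition in~(ii) fails then one of (T2),(T3) must fail; second, \emph{sufficiency} (ii)$\to$(i), by verifying (T2) and (T3) directly at every vertex, distinguishing between ``old'' vertices inherited from $G'$ or the shifted $G''$, and ``new'' terminal/mixed vertices created along the strip $Q$.

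\medskip

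\noindent\textbf{Necessity.} Suppose $T$ is a feasible g-tiling. If $P$ contained a terminal vertex $v$ of $T'$, then $v$ would be the top or bottom vertex of a black $ij$-tile $\sigma$ of $T'$ (with $i,j<n$), and all edges at $v$ would have colors strictly between $i$ and $j$ (by Corollary~\ref{cor:termv}(iii)). But $v$ lies on $P$, hence produces a vertex incident to a new $n$-edge of the strip $Q$ in $T$; this $n$-edge cannot lie in the cone of the black tile $\sigma$ at $v$, violating Corollary~\ref{cor:termv}(iii) for $T$. Thus $P$ avoids terminal vertices. Next, if $P$ had two consecutive backward edges $e_q,e_{q+1}$, the vertex $v_q$ would receive two black $\ast n$-tiles as its bottom vertex in $T$, and $v_q$ would then have both entering and leaving $n$-edges — impossible for a terminal (bottom-of-black) vertex by~(T3); this gives~\refeq{legal}(iii). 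Finally, when $e_q$ is forward and $e_{q+1}$ backward, the added black $i_{q+1}n$-tile $\tau'$ has $v_q$ as its left vertex and $t(\tau')$ as top, and the $i_q$-edge of the white tile $\tau$ opposite $e_q$ enters $t(\tau')$; since this edge lies in $C(\tau',t(\tau'))$, Corollary~\ref{cor:termv}(iii) forces $i_{q+1}<i_q$, which is~\refeq{legal}(iv). The symmetric analysis at bottom vertices gives~\refeq{legal}(v).

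\medskip

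\noindent\textbf{Sufficiency.} Assuming~(ii), I would verify (T2) and (T3) at each vertex of $T$. At vertices strictly inside $G'$ or inside the shifted $G''$, the local tile structure is identical to that in $T'$, so the axioms are inherited. The work concentrates at vertices of the strip $Q$, i.e.\ at the copies $X$ and $Xn$ of a vertex $X$ on $P$. The coloring rule (white for forward edges, black for backward edges of $P$) together with~\refeq{legal}(iii) ensures that no two black $\ast n$-tiles share an edge and that black tiles of $Q$ are isolated along the strip, giving the third condition of~(T2) and preparing (T3). For each newly created terminal vertex — a bottom vertex $b(\tau')$ of a black $\ast n$-tile arising at a backward edge — conditions~\refeq{legal}(iv),(v) guarantee that all white edges incident to it lie in the correct cone so that Corollary~\ref{cor:termv}(iii) holds and all edges leave (resp.\ enter) as required by~(T3); the overlapping condition of~(T2) between the black $\ast n$-tile and its adjacent white neighbors follows from the construction. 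Because $n$ is the maximal color, every $\ast n$-tile is automatically an $in$-tile with $i<n$, which makes the cone analysis clean.

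\medskip

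\noindent\textbf{Main obstacle.} The delicate point is the verification of (T3) at the new terminal vertices in the sufficiency direction: one must check not only that the single black $\ast n$-tile sits correctly, but that \emph{all} other tiles meeting that vertex are white and lie within its cone, with all edges uniformly entering or leaving. This is exactly where~\refeq{legal}(iv),(v) are used, and it requires carefully translating the ``turning'' conditions on consecutive edges of $P$ into the cone-containment statement of Corollary~\ref{cor:termv}(iii). I expect this local cone bookkeeping — matching the colors $i_q,i_{q+1}$ against the cone of the created black tile — to be the heart of the proof, while the inherited-structure and Euler/disc parts are routine given the construction already recorded before the lemma.
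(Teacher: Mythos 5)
Your overall architecture matches the paper's: the expansion already gives (T1) and (T4), so the equivalence is decided by a local check of (T2),(T3) along the new $n$-strip. Your necessity direction is essentially sound and is in fact more self-contained than the paper's, which simply invokes~\refeq{legal} together with the observation that contracting the expansion returns $T'$ with the $n$-strip shrinking to $P^{rev}$; your cone arguments for~\refeq{legal}(iv),(v) reproduce the paper's own derivation of~\refeq{legal}. One detail there is misstated: if $e_q,e_{q+1}$ are both backward, then by~\refeq{epq-eppq} the vertex $v'_q$ is the \emph{bottom} of $\tau_q$ but only the \emph{left} vertex of $\tau_{q+1}$ (not the bottom of two black tiles), and the contradiction with (T3) is that $b(\tau_q)$ is a vertex of another black tile, or equivalently that the edge $e_{q+1}$ enters $b(\tau_q)$; "both entering and leaving $n$-edges" cannot occur since each vertex carries a single $n$-edge. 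This is easily repaired.

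The genuine gap is in sufficiency, exactly at the point you label the main obstacle and then leave as an expectation. Two steps are asserted rather than proved. First, to verify (T3) at $t(\tau_q)$ and $b(\tau_q)$ for a black $\tau_q$ you must know precisely which edges of $G_T$ are incident to these vertices after the cut; this requires showing that the edges and tiles of $G_{T'}$ lying on the left (right) of the path at each $v_q$ fall entirely into $G'$ (resp.\ $G''$) --- the paper's~\refeq{entirely} --- which uses that no tile at a (non-terminal) path vertex is separated by the pair $e_q,e_{q+1}$ and that this local left/right split agrees with the global decomposition along $P$. Second, the claim that~\refeq{legal}(iv),(v) "guarantee that all white edges lie in the correct cone" is the desired conclusion, not an argument: one must apply~\refeq{mixedvert} at the non-terminal vertex $v_{q-1}$ to show that the right-side edge set there consists exactly of the edges entering $v_{q-1}$ with colors $j$, $i_q\le j\le i_{q-1}$, that these are all white, and that their shifted copies plus the $n$-edge $(v'_{q-1},v''_{q-1})$ exhaust $E_T(t(\tau_q))$ (and symmetrically for $b(\tau_q)$). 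Likewise the overlap clauses of (T2) need the case analysis from~\refeq{epq-eppq} according to whether $e_q$ is forward or backward. None of this is deep, but it constitutes essentially the whole content of (ii)$\to$(i), and the proposal stops short of carrying it out.
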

   \begin{proof}
Let $P$ be as in~(ii). We have to verify axioms~(T2),(T3) for $T$. Let
$P'=(v'_0,e'_1,v'_1,\ldots,e'_r,v'_r)$ and
$P''=(v''_0,e''_1,v''_1,\ldots,e''_r,v''_r)$ be the copies of $P$ in the graphs
$G'$ and $G''$ (taken apart), respectively. It suffices to check conditions
in~(T2),(T3) for objects involving elements of $P',P''$ (since for any vertex of
$G'$ not in $P'$, the structure of its incident edges and tiles, as well as the
white/black coloring of tiles, is inherited from $G_{T'}$, and similarly for
$G''$).

Consider a vertex $v_q$ with $1\le q<r$. Let $E_q^L$ ($E_q^R$) denote the set
of edges in $E_{T'}(v_q)$ lying on the left (resp. right) when we move along
$P$ and pass through $e_q,v_q,e_{q+1}$; we include $e_q,e_{q+1}$ in both
$E_q^L$ and $E_q^R$. Let $F_q^L$ ($F_q^R$) denote the set of tiles in
$F_{T'}(v_q)$ of which both edges incident to $v_q$ belong to $E_q^L$ (resp.
$E_q^R$). Note that each tile $\tau\in F_{T'}(v_q)$ must occur in either
$F_q^L$ or $F_q^R$, i.e., $\tau$ is not separated by $e_q$ or $e_{q+1}$ (taking
into account that all vertices of $P$ are non-terminal, and therefore the edges
of $P$ are white, and considering the behavior of edges and tiles at a
non-terminal vertex exhibited in~\refeq{mixedvert}). By the construction of
$G',G''$, any two tiles of $T'$ that share an edge not in $P$ are faces of the
same graph among $G',G''$, and if a tile $\tau\in T'$ has an edge contained in
$\ell bd(Z_{n-1})-P$ ~(resp. $rbd(Z_{n-1})-P$), then $\tau$ is a face of $G'$
(resp. $G''$). Using these observations, one can conclude that
   \begin{numitem1}
for $1\le q<r$, $E_q^L$ and $F_q^L$ are entirely contained in $G'$, while
$E_q^R$ and $F_q^R$ are entirely contained in $G''$.
   \label{eq:entirely}
   \end{numitem1}

For $q=1,\ldots,r$, let $\tau_q^L$ ($\tau_q^R$) denote the tile in $T'$ (if
exists) that contains the edge $e_q$ and lies on the left (resp. right) when we
traverse $e_q$ from $v_{q-1}$ to $v_q$. By~\refeq{entirely}, $\tau_q^L$ is in
$G'$ and $\tau_q^R$ is in $G''$. Also each of $\tau_q^L,\tau_q^R$ is white. Let
$\tau_q$ be the $i_qn$-tile in $T$ that was added to connect the edges $e'_q$
and $e''_q$. Then
  \begin{equation}  \label{eq:epq-eppq}
  e'_q=(b(\tau_q),\ell(\tau_q))\quad \mbox{and}\quad
  e''_q=(r(\tau_q),t(\tau_q)).
  \end{equation}

Suppose that $e_q$ is forward in $P$. Then $\tau_q$ is white. Since $e_q$ is
directed from $v_{q-1}$ to $v_q$ and $\tau_q^L$ lies on the left from $e_q$ when
moving from $v_{q-1}$ to $v_q$, ~$e_q$ belongs to the right boundary of
$\tau_q^L$. This and~\refeq{epq-eppq} imply that $\tau_q$ and $\tau_q^L$ do not
overlap. In its turn, $\tau_q^R$ contains $e_q$ in its left boundary; this
together with~\refeq{epq-eppq} implies that $\tau_q$ and the shifted $\tau_q^R$
(sharing the edge $e''_q$) do not overlap as well. Now suppose that $e_q$ is
backward in $P$. Then $\tau_q$ is black. Since $e_q$ is directed from $v_q$ to
$v_{q-1}$ and $\tau_q^L$ lies on the left from $e_q$ when moving from $v_{q-1}$
to $v_q$, ~$e_q$ belongs to the left boundary of $\tau_q^L$. This implies that
$\tau_q$ and $\tau_q^L$ overlap. Similarly, $\tau_q$ and $\tau_q^R$ overlap.
Thus,~(T2) holds for $\tau_q,\tau_q^L$ and for $\tau_q,\tau_q^R$, as required.
Also the non-existence of pairs of consecutive reverse edges in $P$ implies that
no two black tiles in $T$ share an edge.

To verify~(T3), consider a black tile $\tau_q$. Then $1<q<r$, the edges
$e_{q-1},e_{q+1}$ are forward, and $e_q$ is backward in $P$. Also
$i_{q-1},i_{q+1}>i_q$ (by~\refeq{legal}(iv),(v)). Observe that the set
$E_{q-1}^R$ consists of the edges in $E_{T'}(v_{q-1})$ that enter $v_{q-1}$ and
have color $j$ such that $i_q\le j\le i_{q-1}$ (including $e_{q-1},e_q$). All
these edges are white (as is seen from~\refeq{mixedvert}). The second copies of
these edges (shifted by $\xi_n$) plus the $n$-edge $(v'_{q-1},v''_{q-1})$ are
exactly those edges of $G_T$ that are incident to the top vertex $v''_{q-1}$ of
$\tau_q$. It its turn, the set $E_q^L$ consists of the edges in $E_{T'}(v_q)$
that leave $v_q$ and have color $j$ such that $i_q\le j\le i_{q+1}$, and these
edges are white. Exactly these edges plus the $n$-edge $(v'_q,v''_q)$ form the
set of edges of $G_T$ incident to $b(\tau_q)$. (See the picture.) This
gives~(T3) for $T$.

 \begin{center}
  \unitlength=1mm
  \begin{picture}(145,25)
   \put(5,0){\begin{picture}(60,20)
  \put(0,5){\circle*{1}}
  \put(12,17){\circle*{1}}
  \put(24,5){\circle*{1}}
  \put(32,17){\circle*{1}}
  \put(0,5){\vector(1,1){11.5}}
  \put(24,5){\vector(-1,1){11.5}}
  \put(24,5){\vector(2,3){7.6}}
  \put(9,8){\vector(1,3){2.5}}
  \put(15,8){\vector(-1,3){2.5}}
  \put(24,5){\vector(-1,3){3}}
  \put(24,5){\vector(0,1){9}}
  \put(-5,2){$v_{q-2}$}
  \put(10,19){$v_{q-1}$}
  \put(24,2){$v_q$}
  \put(32,18){$v_{q+1}$}
  \put(-1,11){$e_{q-1}$}
  \put(16,13){$e_q$}
  \put(29,10){$e_{q+1}$}
  \put(9,4){$E_{q-1}^R$}
  \put(21,16){$E_q^L$}
  \put(45,10){\vector(1,0){9.7}}
    \end{picture}}
%
   \put(65,0){\begin{picture}(80,25)
  \put(0,0){\circle*{1}}
  \put(12,12){\circle*{1}}
  \put(24,0){\circle*{1}}
  \put(32,12){\circle*{1}}
  \put(48,12){\circle*{1}}
  \put(60,24){\circle*{1}}
  \put(72,12){\circle*{1}}
  \put(80,24){\circle*{1}}
  \put(0,0){\vector(1,1){11.5}}
  \put(24,0){\vector(2,3){7.6}}
  \put(48,12){\vector(1,1){11}}
  \put(72,12){\vector(2,3){7.6}}
  \put(0,0){\vector(4,1){47.5}}
  \put(32,12){\vector(4,1){47.5}}
  \put(57,15){\vector(1,3){2.5}}
  \put(63,15){\vector(-1,3){2.5}}
  \put(24,0){\vector(-1,3){3}}
  \put(24,0){\vector(0,1){9}}

\thicklines{
  \put(24,0){\vector(-1,1){11.5}}
  \put(72,12){\vector(-1,1){11.5}}
  \put(12,12){\vector(4,1){47.5}}
  \put(24,0){\vector(4,1){47.5}}
 }
  \put(23,-1){$\blacklozenge$}
  \put(59,23){$\blacklozenge$}
  \put(-2,6){$e'_{q-1}$}
  \put(16,9){$e'_q$}
  \put(28,4){$e'_{q+1}$}
  \put(51,13){$e''_{q-1}$}
  \put(68,17){$e''_q$}
  \put(76,16){$e''_{q+1}$}
  \put(19,-1){$v''_q$}
  \put(62,24){$v''_{q-1}$}
    \end{picture}}
  \end{picture}
   \end{center}

Thus, (ii) implies~(i) in the lemma. The converse implication (i)$\to$(ii)
follows from~\refeq{legal} and the fact (mentioned earlier) that for the
$n$-expansion $T$ of $T'$ along $P$, the $n$-contraction operation applied to
$T$ produces $T'$, and under this operation the $n$-strip for $T$ shrinks into
$P^{rev}$. This completes the proof of the lemma.
  \end{proof}

Let us call a path $P$ as in (ii) of Lemma~\ref{lm:leg-exp} {\em legal}. It is
the concatenation of $P_1,\ldots,P_{n-1}$, where $P_h$ is the maximal subpath of
$P$ whose edges connect levels $h-1$ and $h$, i.e., are of the form $(X,Xi)$ with
$|X|=h-1$. We refer to $P_h$ as $h$-th {\em subpath} of $P$ and say that this
subpath is {\em ordinary} if it has only one edge, and {\em zigzag} otherwise.
The beginning vertices of these subpaths together with $h_{n-1}$ are called {\em
critical} in $P$ (so there is exactly one critical vertex in each level); these
vertices will play an important role in what follows. Note that the critical
vertices of a legal path $P=(v_0,e_1,v_1,\ldots, e_r,v_r)$ are $v_0=p_0$,
~$v_r=p_{n-1}$ and the intermediate vertices $v_q$ such that $e_q$ enters and
$e_{q+1}$ leaves $v_q$. We distinguish between two sorts of non-critical vertices
$v_q$ by saying that $v_q$ is a $\vee$-{\em vertex} if both $e_q,e_{q+1}$ leave
$v_q$, and a $\wedge$-{\em vertex} if both $e_q,e_{q+1}$ enter $v_q$. Observe
that
   \begin{numitem1}
regarding a vertex of $P$ as a subset $X$ of $[n-1]$, the following hold:
(a)~if $X$ is critical, then both $X,Xn$ are in $B_T$; (b)~if $X$ is a
$\wedge$-vertex, then $X\in B_T$ and $Xn\not\in B_T$; and (c)~if $X$ is a
$\vee$-vertex, then $X\not\in B_T$ and $Xn\in B_T$ (where $T$ is the
$n$-expansion of $T'$ along $P$).
   \label{eq:c-w-v}
   \end{numitem1}
Indeed, from the proof of Lemma~\ref{lm:leg-exp} one can see that: if $X$ is
critical, then both vertices $X,Xn$ of $G_T$ have entering and leaving edges, so
they are non-terminal; if $X$ is a $\wedge$-vertex, then $Xn$ is terminal while
$X$ is not; and if $X$ is a $\vee$-vertex, then $X$ is terminal while $Xn$ is not
(see the above picture).

It follows that
  \begin{numitem1}
(i) $B_T=B'\cup B''$, where $B'$ consists of all non-terminal vertices $X$ in
$G'_P$ that are not $\vee$-vertices in $P$, and $B''$ consists of all $Xn$ such
that $X$ is a non-terminal vertex in $G''_P$ that is not a $\wedge$-vertex of
$P$;

(ii) for each $h=0,\ldots,n-1$, there is exactly one set $X\subseteq[n-1]$ with
$|X|=h$ such that both $X$ and $Xn$ belong to $B_T$; moreover, this $X$ is just
the unique critical vertex of $P$ in level $h$.
  \label{eq:decompBT}
  \end{numitem1}

Summing up the above results, we can conclude with the following
  \begin{corollary}  \label{cor:TP-T}
The correspondence $(T',P)\mapsto T$, where $T'$ is a g-tiling on $Z_{n-1}$,
~$P$ is a legal path for $T'$, and $T$ is the $n$-expansion of $T'$ along $P$,
gives a bijective map $\epsilon$ of the set of such pairs $(T',P)$ to the set
of g-tilings on $Z_n$. Moreover, for any g-tiling $T$ on $Z_n$, the pair
$\eps^{-1}(T)$ consists of the $n$-contraction $T'$ of $T$ and the legal path
for $T'$ corresponding to the $n$-strip in $T$.
  \end{corollary}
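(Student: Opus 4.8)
The plan is to exhibit the $n$-expansion map $\epsilon$ and an augmented $n$-contraction map as mutually inverse bijections. First I would pin down the two maps. The map $\epsilon$ sends a pair $(T',P)$, with $T'$ a g-tiling on $Z_{n-1}$ and $P$ legal, to its $n$-expansion $T$; that $\epsilon$ lands among g-tilings on $Z_n$ is precisely the implication (ii)$\to$(i) of Lemma~\ref{lm:leg-exp}. In the reverse direction I would set $\delta(T):=(T/n,\,P)$ with $P:=(R_Q)^{rev}$, where $Q=Q_n$ is the $n$-strip of $T$ and $R_Q$ its right boundary. By Corollary~\ref{cor:contract} the first coordinate $T/n$ is a g-tiling on $Z_{n-1}$, so the only point to check for well-definedness of $\delta$ is that $P$ is legal for $T/n$. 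This is read off from~\refeq{legal}: parts (i),(iii),(iv),(v) give the endpoint and zigzag conditions in the definition of legality, part~(ii) was already observed to force $P$ to avoid terminal vertices after contraction, and simplicity of $P$ holds because $R_Q$ is the non-boundary part of the frontier of the simply-connected region $D_{T_n^-}$, hence a simple arc.

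Next I would verify $\delta\circ\epsilon=\mathrm{id}$. For $T=\epsilon(T',P)$, the construction of the $n$-expansion already records that the inserted $\ast n$-tiles form the $n$-strip $Q$ of $T$, with $R_Q=P^{rev}$ and $L_Q$ the $\xi_n$-shift of $P^{rev}$; consequently the $n$-contraction of $T$ returns $T'$ and the legal path attached to $R_Q$ is $((R_Q)^{rev})^{rev}=P$. Thus $\delta(\epsilon(T',P))=(T',P)$. This is essentially the same fact already invoked to establish (i)$\to$(ii) in Lemma~\ref{lm:leg-exp}, so it requires no new work.

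The substantive step is $\epsilon\circ\delta=\mathrm{id}$, i.e.\ recovering a given g-tiling $T$ on $Z_n$ from $(T/n,P)$. Here I would argue on the disc. The $n$-contraction deletes the interior of $Q$, identifies $R_Q$ with $L_Q$, and shifts the tiles $T_n^+$ (those containing $n$) by $-\xi_n$; because $D_{T_n^-}$ and $D_{T_n^+}$ are simply connected and meet only along the identified arc, the cut of $T/n$ along $P$ splits it into exactly the left subgraph $G'_P$ (the image of $T_n^-$) and the right subgraph $G''_P$ (the $-\xi_n$-shift of $T_n^+$). The $n$-expansion along $P$ then undoes both moves: it shifts $G''_P$ back by $+\xi_n$, restoring $T_n^+$, and reinserts one $\ast n$-tile per edge of $P$. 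To see these reinserted tiles equal the original $T_n^0$, I would match colours via~\refeq{strip_boundary}: for the strip $Q$ (where $n$ is the maximal colour) the forward edges of $R_Q$, equivalently the backward edges of $P$, belong to the black $\ast n$-tiles of $Q$, while the backward edges of $R_Q$, equivalently the forward edges of $P$, belong to the white ones; and the endpoints of each reinserted tile are fixed by~\refeq{epq-eppq}. This is exactly the white/black rule used by the expansion, so $\epsilon(T/n,P)=T$, whence $\epsilon$ is a bijection with inverse $\delta$, as claimed.

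The main obstacle I anticipate is in this last composition: ensuring that cutting $T/n$ along $P$ reproduces the very partition of tiles into $T_n^-$ and (shifted) $T_n^+$ that the contraction created, rather than a different splitting. What keeps this routine is that $P$ separates $D_{T/n}$ into two simply-connected pieces, one containing $\ell bd(Z_{n-1})$ and the other $rbd(Z_{n-1})$, so the left/right assignment of each tile is forced; after that, the only remaining check — that the single $\ast n$-tile glued onto each edge of $P$ carries the correct colour and endpoints — is the bookkeeping dispatched by~\refeq{strip_boundary} and~\refeq{epq-eppq}.
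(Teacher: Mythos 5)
Your proposal is correct and follows essentially the same route as the paper, which derives the corollary by "summing up" exactly the ingredients you assemble: well-definedness of the expansion via Lemma~\ref{lm:leg-exp}, well-definedness of the contraction-plus-path map via Corollary~\ref{cor:contract} and~\refeq{legal}, and the observation that expansion followed by contraction returns $(T',P)$. You spell out the composition $\epsilon\circ\delta=\mathrm{id}$ in more detail than the paper (which leaves it implicit in the geometric description of the two operations), but the argument and the supporting facts (\refeq{strip_boundary}, \refeq{epq-eppq}, simple-connectedness of $D_{T_n^\pm}$) are the same.
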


We conclude this section with an additional result which will be important for
purposes of the next section. For a g-tiling $T$ on $Z_n$ and for $1\le h\le
n$, let $H_h$ denote the subgraph of $G_T$ induced by the set of {\em white}
edges connecting levels $h-1$ and $h$.
   \begin{lemma} \label{lm:Hh}
For each $h=1,\ldots,n$, the graph $H_h$ is a forest. Furthermore:

{\rm (i)} there exists a (connected) component $K$ of $H_h$ that contains both
boundary edges $p_{h-1}p_h$ and $p'_hp'_{h-1}$ and such that all vertices of $K$
are non-terminal;

{\rm (ii)} any other component $K'$ of $H_h$ contains exactly one terminal vertex
$v$ and all edges of $K'$ are incident to $v$ (i.e., $K'$ is a star).
   \end{lemma}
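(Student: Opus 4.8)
The plan is to recover $H_h$ as a cross-section of $T$ by the height function, and to read its components off the resulting slice. I first determine which vertices of $H_h$ are terminal. Any terminal vertex carrying an $H_h$-edge must have an edge between levels $h-1$ and $h$, so by axiom~(T3) and Corollary~\ref{cor:termv} it is either the bottom of a black tile of height $h$ (a level-$(h-1)$ vertex all of whose edges leave it, running up to level $h$) or the top of a black tile of height $h-1$ (a level-$h$ vertex, all edges entering from level $h-1$). In either case every $H_h$-edge at such a vertex $v$ is white and runs, inside the cone of the black tile, to a \emph{non-terminal} vertex, by Corollary~\ref{cor:termv}(iii) and~\refeq{termv}.

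Next I would show that each such neighbour $u$ is a leaf of $H_h$. Working at $b(\tau)$ of a black $ij$-tile $\tau$, the white tiles around $b(\tau)$ fill the cone $C(\tau,b(\tau))$ (the rotation angle there is $0$, Corollary~\ref{cor:termv}(iv)), and each white-edge neighbour $u$ is at once the right vertex of one of these tiles and the left vertex of the next; hence the two tile-edges flanking $(v,u)$ at $u$ both leave $u$. Since by~\refeq{mixedvert}(ii) the white entering edges at the non-terminal vertex $u$ form a contiguous angular block, being flanked by leaving edges forces $(v,u)$ to be the \emph{only} white $H_h$-edge at $u$. The symmetric argument at $t(\tau)$ handles black tiles of height $h-1$. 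Together with Corollary~\ref{cor:termv}(i) (no two terminal vertices are adjacent) this yields~(ii): any component meeting a terminal vertex is a star centred at it.

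For the rest I would bring in the piecewise-linear height function $\phi$ on the disc $D_T$ and the cross-section $L:=\phi^{-1}(h-1/2)$. Because at every non-terminal vertex the entering edges and the leaving edges are each contiguous (\refeq{mixedvert}(i)), $\phi$ has no saddles: its critical points are exactly the terminal vertices (maxima at tops, minima at bottoms). Consequently $L$ is a $1$-manifold consisting of a single arc $L_0$, from the left boundary edge $p_{h-1}p_h$ to the right boundary edge $p'_hp'_{h-1}$ (the only two points of $\partial D_T$ with $\phi=h-1/2$, as $\phi$ is monotone along each boundary arc), together with one small circle around each terminal vertex of $H_h$. Every edge between the two levels is crossed by $L$ exactly once: those incident to a terminal vertex are crossed by its circle, recovering precisely the stars of~(ii), while every white edge with both ends non-terminal is crossed by $L_0$. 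Hence these latter edges form one connected subgraph $K$, which contains the two boundary edges (the first and last crossings of $L_0$, white by Corollary~\ref{cor:termv}(v)) and has all vertices non-terminal; this gives existence and uniqueness in~(i).

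For acyclicity I would argue that $L_0$ meets each non-terminal vertex in a single contiguous stretch: by~\refeq{mixedvert}(ii) the white edges running from the vertex to the adjacent level form one contiguous angular block, so $L_0$ crosses them all consecutively as it rounds the vertex. Thus $K$ (and likewise each star) is traced by a simple curve visiting each of its vertices once, hence is a tree; with~(ii) this yields the forest claim. The main obstacle is exactly this topological bookkeeping — proving rigorously that $L$ splits as one boundary-to-boundary arc plus circles localized at the terminal vertices, and that, once the black tiles (whose edges are siphoned off to the circles) are removed, $L_0$ passes each vertex connectedly. Both rest on converting the local contiguity/no-saddle consequences of~\refeq{mixedvert} and Corollary~\ref{cor:termv} into this global picture, so the real work lies in that conversion rather than in any computation.
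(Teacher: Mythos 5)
Your reduction of part (ii) to the local data at a terminal vertex is sound, and your closing count (each vertex of $K$ met by $L_0$ in a single contiguous stretch, hence $|V(K)|=|E(K)|+1$) correctly yields the tree property \emph{once the claimed decomposition of the level set is in hand}. The genuine gap is precisely at that decomposition, and it is not the routine ``topological bookkeeping'' you describe. The absence of saddles at non-terminal vertices (via \refeq{mixedvert}(i)) together with a Morse count only shows that every circle component $C$ of $L=\phi^{-1}(h-1/2)$ bounds a disc on which $\phi$ has a unique critical point, a local extremum sitting at some terminal vertex $w$ --- but it does \emph{not} show that $w$ lies in level $h$ or $h-1$, nor that $C$ is the small link-circle of $w$. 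A priori $L$ could contain a ``large'' circle enclosing a whole region of the tiling lying above level $h-1/2$ with its unique maximum at a terminal vertex of level $\ge h+1$; nothing in the no-saddle statement forbids this. Worse, such an extra circle would cross exactly the edges of a component of $H_h$ that is neither $K$ nor a star, so ruling out extra circles is essentially \emph{equivalent} to the component statement you are trying to prove; asserting the decomposition as a ``consequence'' therefore begs the question.

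The gap is fillable, but it needs the black tiles again, not just contiguity: if an extra circle bounded a disc $U\subseteq\{\phi>h-1/2\}$ with unique local maximum $w=t(\bar\tau)$ at level $g$, then $g=h$ forces $U$ to be the open star of $w$ (every other level-$h$ vertex of $U$ would have a leaving edge into $U$, contradicting $\max\phi|_U=h$), while $g\ge h+2$ puts $b(\bar\tau)$ inside $U$ as a forbidden local minimum, and $g=h+1$ makes the bottom edges of $\bar\tau$ cross $\partial U$, forcing $\partial U$ to coincide with the link-circle of the terminal vertex $b(\bar\tau)$ --- impossible since $\phi$ has opposite signs relative to $h-1/2$ on the inner sides of these two circles. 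None of this is in your write-up. By contrast, the paper avoids the global level-set picture entirely: it proves a local non-crossing property for $3$-edge paths in $H_h$ (its \refeq{3-path}, from \refeq{mixedvert} and (T2)), which makes every path in $H_h$ monotone and hence kills cycles outright, handles stars via \refeq{3edges}, and locates the boundary edges by a leftmost/rightmost-edge argument. That route is shorter and stays combinatorial; yours would give a nice dual picture of $H_h$ as a cross-section, but only after the localization of the circle components is actually proved.
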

  \begin{proof} ~We observe that
  \begin{numitem1}
any 3-edge path $(v_0,e_1,v_1,e_2,v_2,e_3,v_3)$ in $H_h$ satisfies either
$i_1,i_3<i_2$ or $i_1,i_3>i_2$, where $i_1,i_2,i_3$ are the colors of
$e_1,e_2,e_3$, respectively; equivalently: the edges $e_1$ and $e_3$ (regarding
as line-segments in $Z_n$) do not intersect.
   \label{eq:3-path}
   \end{numitem1}
Indeed, suppose this is not so. W.l.o.g., one may assume that $i_1<i_2<i_3$ and
that the edges $e_1,e_2$ leave $v_1$. Then $v_1$ is in level $h-1$, ~$v_2$ is
in level $h$, and $e_3$ enters $v_2$. Take the edge $e$ leaving $v_1$ whose
color $j$ is maximum subject to $i_1\le j<i_2$, and the edge $e'$ entering
$v_2$ whose color $j'$ is minimum subject to $i_2<j'\le i_3$.
By~\refeq{mixedvert} (applied to $v_1$ and to $v_2$), both edges $e,e'$ are
white,  the edges $e,e_2$ belong to a white tile $\tau$ with the bottom vertex
$v_1$, and $e',e_2$ belong to a white tile $\tau'$ with the top vertex $v_2$.
But such tiles $\tau,\tau'$ (sharing the edge $e_2$) overlap, contrary to~(T2).

In view of~\refeq{3-path}, no path in $H_h$ can be cyclic (it monotonically goes
in one direction, either from left to right or from right to left). Hence $H_h$
is a forest in which any component $K$ (a tree) has a planar layout on $Z_n$,
i.e., non-adjacent edges in $K$ do not intersect. Suppose that $H_h$ contains a
terminal vertex $X$ in level $h-1$, and consider an edge in $H_h$ incident to
$X$, say, $e=(X,Xi)$ (taking into account that $e$ goes from level $h-1$ to level
$h$). Since $X$ is terminal, each of the two white tiles $\tau',\tau''$
containing $e$ has the bottom vertex at $X$ and the right of left vertex at $Xi$.
By~\refeq{3edges}, there is no white edge incident to $Xi$ and lying strictly
inside the cone $C(\tau',Xi)$, and similarly for $\tau''$. This implies that $e$
is the unique edge of $H_h$ incident to $Xi$. Hence the component of $H_h$
containing $X$ is a star of which all edges are incident to $X$. A similar
property holds for the components of $H_h$ meeting a terminal vertex in level
$h$.

Finally, consider a component $K$ without terminal vertices (it exists since the
boundary edge $p_{h-1}p_h$ is white and both of its ends are non-terminal). We
assert that $K$ contains $p_{h-1}p_h$. Indeed, take the leftmost edge in $K$,
say, $e=(X,Xi)$, and suppose that $e\ne p_{h-1}p_h$. Then there is a white tile
$\tau$ containing $e$ on its right boundary. Assume that $b(\tau)=X$; then
$r(\tau)=Xi$. Then the edge $e':=(b(\tau),\ell(\tau))$ is black (as $e'$ connects
levels $h-1$ and $h$ and lies on the left from $e$). Since $\ell(\tau)$ has both
entering and leaving edges, it cannot be terminal. So $b(\tau)$ is terminal,
contradicting the choice of $K$. The case $t(\tau)=Xi$ leads to a similar
contradiction. Thus, $K$ contains $p_{h-1}p_h$. Considering the rightmost edge of
$K$ and arguing similarly, we conclude that $K$ contains the boundary edge
$p'_hp'_{h-1}$ as well.
  \end{proof}

We will refer to the component $K$ as in~(i) of this lemma as the {\em principal}
one. Considering a legal path $P$ for $T$ and taking into account that all
vertices of $P$ are non-terminal and that $h$-th subpath in it is contained in
$H_h$, for each $h$, we obtain the following property as a consequence of
Lemma~\ref{lm:Hh}.
  \begin{corollary}  \label{cor:critic}
Any legal path for a g-tiling is determined by the set of its critical
vertices.
   \end{corollary}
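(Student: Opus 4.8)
The plan is to reconstruct $P$ from its critical vertices one level at a time, using the forest structure of the graphs $H_h$ supplied by Lemma~\ref{lm:Hh}. Write the critical vertices as $c_0,c_1,\dots,c_{n-1}$, where $c_h$ is the one lying in level $h$; since $|c_h|=h$, the levels are pairwise distinct, so the unordered \emph{set} of critical vertices already determines this indexing. By the definition of the subpaths, the $h$-th subpath $P_h$ runs from $c_{h-1}$ to $c_h$ inside levels $h-1$ and $h$ (its starting vertex is critical, and its endpoint is the start of $P_{h+1}$, i.e.\ $c_h$, with $P_{n-1}$ ending at $c_{n-1}=p_{n-1}$), and $P$ is the concatenation $P_1P_2\cdots P_{n-1}$. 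Hence it suffices to show that each $P_h$ is uniquely recovered from the pair $c_{h-1},c_h$ together with $T$ itself.

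First I would note that every edge of a legal path is white: a legal path has no terminal vertices, so each of its edges joins two non-terminal vertices, whereas a black edge is a side edge of a black tile and therefore joins a terminal vertex (its bottom or top) to a mixed vertex (its left or right), so it cannot have both ends non-terminal. Consequently $P_h$ is a path in $H_h$ all of whose vertices are non-terminal. By Lemma~\ref{lm:Hh}, $H_h$ is a forest whose non-principal components are stars centred at terminal vertices; as every edge of such a star is incident to its (terminal) centre, any path avoiding terminal vertices — in particular $P_h$, which has length at least one — cannot lie inside a star component. Therefore $P_h$ lies entirely in the principal component $K_h$, and $K_h$ depends only on $T$, not on $P$.

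Now $K_h$ is a tree and $P_h$ is a \emph{simple} subpath of the simple path $P$, so $P_h$ is the unique simple path in the tree $K_h$ joining its endpoints $c_{h-1}$ and $c_h$. This exhibits $P_h$ as a function of $T$ and of the two critical vertices $c_{h-1},c_h$ alone; concatenating over $h=1,\dots,n-1$ recovers $P$, so $P$ is determined by its set of critical vertices.

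The only step requiring genuine care is ruling out that some $P_h$ runs through a star component of $H_h$; once that is excluded, the conclusion is immediate from the elementary fact that a tree contains a unique simple path between any two of its vertices. Note that the ordinary/zigzag distinction plays no role here: however long a zigzag subpath $P_h$ may be, it is pinned down inside the tree $K_h$ by its two endpoints, which is precisely why the critical vertices constitute sufficient data.
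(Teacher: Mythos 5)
Your argument is correct and is essentially the paper's own: the paper derives the corollary in one sentence from Lemma~\ref{lm:Hh}, observing that all vertices of a legal path are non-terminal and the $h$-th subpath lies in $H_h$, hence in the principal tree component, where it is pinned down by its endpoints (the critical vertices). You have simply spelled out the same steps, including the useful explicit check that black edges always have a terminal endpoint so that $P_h$ indeed lives in $H_h$ and avoids the star components.
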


Finally, let $LP_T$ denote the subgraph of $G_T$ that is the union of all legal
paths for $T$. One can construct $LP_T$ as follows. At the beginning, put $G'$
to be the union of principal components in $H_h$, $h=1,\ldots,n$. If $G'$
contains a vertex $v$ having exactly one incident edge in $G'$, then we remove
this vertex and edge from $G'$, repeat the procedure for the new $G'$, and so
on until no such $v$ exists. Then the final $G'$ is just $LP_T$. This graph
gives a nice compact representation for the set of legal paths for $T$. (It
contains every legal path for $T$. Conversely, starting from $p_0$ and forming
a maximal path in $LP_T$ as the concatenation of ordinary or zigzag paths going
to the right, we always reach $p_n$, obtaining a legal path. Note that the same
graph $LP_T$ appears when we are interested in expansions w.r.t. the new
minimal color, say, 0; in this case one should consider zigzag paths in $LP_T$
going to the left.)


\section{\Large Weakly separated set-systems}  \label{sec:wsf}

The goal of this section is to prove the following theorem answering
Leclerc--Zelevinsky's conjecture mentioned in the Introduction.
 \begin{theorem} \label{tm:LZ}
Any largest weakly separated collection $C\subseteq 2^{[n]}$ is a semi-normal
TP-basis.
 \end{theorem}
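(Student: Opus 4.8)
The plan is to prove the equivalent implication (iv)$\to$(iii), namely that any largest weakly separated collection $C\subseteq 2^{[n]}$ is the spectrum of a generalized tiling on $Z_n$; the statement then follows from Theorem~\ref{tm:bas-til}. I would argue by induction on $n$, the cases $n\le 2$ being immediate, with the inductive step powered by the $n$-contraction/$n$-expansion correspondence of Corollary~\ref{cor:TP-T}. Given a largest $C$, I split it by the maximal element, setting $C_0:=\{X\in C\colon n\notin X\}$ and $C_1:=\{X\subseteq[n-1]\colon Xn\in C\}$, so that $|C_0|+|C_1|=|C|=\binom{n+1}{2}+1$.

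The first key step is to show that $C_0\cup C_1$ is a weakly separated collection in $2^{[n-1]}$. Separation within $C_0$ is inherited from $C$, and separation within $C_1$ holds because adjoining the common maximal element $n$ to two sets leaves their symmetric difference unchanged, hence preserves weak separation. The only nontrivial pairs are $X\in C_0$, $Y\in C_1$, for which I would establish the auxiliary fact that, for $X,Y\subseteq[n-1]$, weak separation of $X$ and $Yn$ implies weak separation of $X$ and $Y$. This is a short check on the definitions of $\prec$ and $\rhd$: since $n$ exceeds every element of $[n-1]$, the relations $Yn\prec X$ and $Yn\rhd X$ are impossible, while $X\prec Yn$ and $X\rhd Yn$ both reduce to weak separation of $X$ and $Y$. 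Applying this to $X\in C$ and $Yn\in C$ yields weak separation of $X,Y$, so $C_0\cup C_1$ is weakly separated.

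Next I would pin down the overlap $C_0\cap C_1=\{X\subseteq[n-1]\colon X,\,Xn\in C\}$. If $X\ne Y$ both lay in $C_0\cap C_1$ with $|X|=|Y|$, then applying the auxiliary fact to the weakly separated pairs $\{X,Yn\}$ and $\{Y,Xn\}$ (available since $X,Y,Xn,Yn\in C$) would force both $X\prec Y$ and $Y\prec X$ — at equal cardinalities the $\rhd$-options are excluded — which is contradictory. Hence $C_0\cap C_1$ contains at most one set of each cardinality, so $|C_0\cap C_1|\le n$. Since $|C_0\cup C_1|=\binom{n+1}{2}+1-|C_0\cap C_1|$ and $|C_0\cup C_1|\le\binom{n}{2}+1$ by the Leclerc--Zelevinsky bound for $[n-1]$, this forces $|C_0\cap C_1|=n$, with exactly one set $X_h$ of each size $h=0,\dots,n-1$, and shows $C_0\cup C_1$ is a \emph{largest} weakly separated collection in $[n-1]$. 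By the induction hypothesis there is a g-tiling $T'$ on $Z_{n-1}$ (unique by Theorem~\ref{tm:unique}) with $B_{T'}=C_0\cup C_1$.

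It remains to reconstruct a g-tiling $T$ on $Z_n$ with $B_T=C$, and this is the main obstacle. By Corollary~\ref{cor:TP-T} and the spectrum formula~\refeq{decompBT}, it suffices to exhibit a legal path $P$ in $G_{T'}$ whose critical vertices are exactly the sets $X_0,\dots,X_{n-1}$ of $C_0\cap C_1$ and which separates the remaining vertices correctly: each $X\in C_0\setminus C_1$ must become a $\wedge$-vertex of $P$ or lie in the left subgraph $G'_P$, and each $X\in C_1\setminus C_0$ must become a $\vee$-vertex or lie in the right subgraph $G''_P$ (cf.~\refeq{c-w-v}). Since a legal path is determined by its critical vertices (Corollary~\ref{cor:critic}), the candidate $P$ is forced; the work is to show it genuinely exists and is legal. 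Here I would use Lemma~\ref{lm:Hh}: for each $h$ the white edges between levels $h-1$ and $h$ form a forest with a non-terminal principal component, and I would join $X_{h-1}$ to $X_h$ through this principal component, the tree structure automatically producing an ordinary or zigzag subpath satisfying~\refeq{legal}(iii)--(v). The decisive point, which I expect to be hardest, is the left/right separation: I would translate weak separation of $X\in C$ with $Yn\in C$ (for $X\in C_0\setminus C_1$, $Y\in C_1\setminus C_0$) into the geometric statement that $X$ lies to the left and $Y$ to the right of $P$ in the zonogon, so that cutting $T'$ along $P$ places the $C_0$-sets in $G'_P$ and the $C_1$-sets in $G''_P$. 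Once $P$ is in hand, Lemma~\ref{lm:leg-exp} guarantees that the $n$-expansion $T$ of $T'$ along $P$ is a g-tiling, and~\refeq{decompBT} gives $B_T=C_0\cup\{Xn\colon X\in C_1\}=C$, completing the induction and showing that $C$ is a semi-normal TP-basis.
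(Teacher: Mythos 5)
Your overall strategy is the same as the paper's: induct on $n$, project $C$ to $2^{[n-1]}$, identify the ``separator'' $S=C_0\cap C_1$ (one set per level), build a legal path whose critical vertices are $S$ inside the principal components of Lemma~\ref{lm:Hh}, and apply the $n$-expansion of Corollary~\ref{cor:TP-T}. A genuine plus of your write-up is that you derive from first principles the facts that $C_0\cup C_1$ is weakly separated and that $S$ has exactly one member per level; the paper simply imports these (\refeq{separ}, \refeq{Cq}) from Leclerc--Zelevinsky. However, the two decisive steps are left as declarations of intent, and both hide real work.

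First, ``join $X_{h-1}$ to $X_h$ through the principal component $K_h$, the tree structure automatically producing a legal subpath'' is not automatic. You must prove (a) that $X_{h-1}$ and $X_h$ actually lie in $K_h$ rather than in a star component around a terminal vertex, and (b) that the tree-path between them runs left-to-right, which is exactly what conditions \refeq{legal}(iv),(v) encode. The paper's Lemma~\ref{lm:MNS} handles (a) by showing that a separator element sitting in a star component would fail to be $\prec$-comparable with some non-terminal vertex of its level, and handles (b) using $S_{h-1}\prec S_h$; both rest on the ordering facts \refeq{prec-sep}, which you never establish (they are derivable by the same elementary case analysis you use for the auxiliary fact, but they are not in your proof). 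Second, and more seriously, your plan for showing $B_T=C$ --- translating weak separation of $X\in C_0\setminus C_1$ and $Yn\in C$ into a geometric left/right position relative to $P$ --- is exactly the point you flag as hardest and for which you give no argument. The paper sidesteps this entirely: $B_T$ is a semi-normal basis by Theorem~\ref{tm:bas-til}, hence a largest ws-collection; by \refeq{decompBT} it has projection $C'$ and separator $S$; and \refeq{prec-sep} implies a largest ws-collection is \emph{uniquely determined} by its projection and separator, so $B_T=C$ with no geometry needed. Without either that uniqueness argument (which again requires \refeq{prec-sep}) or a worked-out version of your left/right separation, the proof is incomplete at its crux.
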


For brevity we will abbreviate ``weakly separated collection'' as
``ws-collection''. Recall that a ws-collection $C\subseteq 2^{[n]}$ is {\em
largest} if its cardinality $|C|$ is maximum among all ws-collections in
$2^{[n]}$; this maximum is equal to $\binom{n+1}{2}+1$~\cite{LZ}. An important
example is the set $\Iscr_n$ of intervals in $[n]$ (including the empty set).
Also it was shown in~\cite{LZ} that a (lowering or raising) flip in a
ws-collection produces again a ws-collection. Moreover, its cardinality
preserves under a flip since it replaces one set in some pair $\{Xj,Xik\}$
(say) by the other. Due to these facts, the set $\Cscr_n$ of largest
ws-collections includes $\Bscr_n$ (the set of semi-normal bases for
$\Tscr\Pscr_n$). Theorem~\ref{tm:LZ} says that the converse inclusion takes
place as well. As a result, we will conclude with the following
   \begin{corollary} \label{cor:C=B}
$\Cscr_n=\Bscr_n$.
  \end{corollary}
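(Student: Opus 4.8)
The plan is to establish the one missing inclusion $\Cscr_n\subseteq\Bscr_n$ (Theorem~\ref{tm:LZ}) by induction on $n$, using the $n$-contraction/$n$-expansion correspondence of Corollary~\ref{cor:TP-T} to pass from a largest ws-collection in $[n]$ to one in $[n-1]$. Since $\Bscr_n=\Bscr\Tscr_n$ by Theorem~\ref{tm:bas-til}, it suffices to realize an arbitrary $C\in\Cscr_n$ as the spectrum $B_T$ of a g-tiling $T$ on $Z_n$; the base cases $n=1,2$ are immediate. For the inductive step I would split $C$ by the top element: put $C_0:=\{X\in C\colon n\notin X\}$ and $C_1:=\{X\in C\colon n\in X\}$, and form the ``contracted'' collection $C':=C_0\cup(C_1-n)$ together with the ``doubled'' family $D:=C_0\cap(C_1-n)=\{X\subseteq[n-1]\colon X,Xn\in C\}$. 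The goal is to show $C'$ is a largest ws-collection in $[n-1]$, apply the induction hypothesis to obtain a g-tiling $T'$ with $B_{T'}=C'$, and then exhibit a legal path $P$ for $T'$ whose $n$-expansion $T$ satisfies $B_T=C$, reading the spectrum off from \refeq{decompBT}.

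First I would record two separation facts. (1) Deleting the maximum element preserves weak separation: if $A,B$ are weakly separated with $n\in B$ and $n\notin A$, then $A$ and $B-n$ are weakly separated; a short case analysis of the definitions of $\prec$ and $\rhd$, using that $n$ is largest and so can only occupy the ``top'' of $B-A$, yields this. Hence $C'$ is weakly separated, so $|C'|\le\binom{n}{2}+1$ and therefore $|D|=|C|-|C'|\ge\binom{n+1}{2}-\binom{n}{2}=n$. (2) No two distinct members of $D$ share a size: if $X,Y\in D$ with $|X|=|Y|$ and $X\ne Y$, then running through the four weak-separation alternatives for the pair $X,Y$ and testing $Xn$ against $Y$ (or $Yn$ against $X$) forces $Xn,Y$ to fail weak separation, again because $n$ is maximal. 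Since the available heights in $[n-1]$ are $0,1,\dots,n-1$, fact (2) gives $|D|\le n$; with $|D|\ge n$ this yields $|D|=n$, exactly one doubled set per height, and $|C'|=\binom{n}{2}+1$, i.e. $C'\in\Cscr_{n-1}$.

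By the induction hypothesis $C'\in\Bscr_{n-1}=\Bscr\Tscr_{n-1}$, so by Theorem~\ref{tm:unique} there is a unique g-tiling $T'$ on $Z_{n-1}$ with $B_{T'}=C'$. The members of $D$ are then $n$ non-terminal vertices of $T'$, one in each level, and these are the candidate critical vertices. By Corollary~\ref{cor:critic} a legal path is determined by its critical set, so I would take $P$ to be the legal path of $T'$ with critical vertices $D$, using the principal components of the forests $H_h$ of Lemma~\ref{lm:Hh} to join consecutive members $d_{h-1},d_h$ by the ordinary or zigzag subpath $P_h$, the intermediate $\vee$- and $\wedge$-vertices being supplied by $(C_1-n)\setminus C_0$ and $C_0\setminus(C_1-n)$. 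One then checks the legality conditions \refeq{legal}(iii)--(v): the required color monotonicities at $\vee$- and $\wedge$-vertices translate precisely into weak-separation relations among the relevant sets of $C$. Feeding $(T',P)$ into the $n$-expansion of Corollary~\ref{cor:TP-T} produces a g-tiling $T$ on $Z_n$, and \refeq{decompBT} then identifies $B_T=C_0\cup C_1=C$, completing the induction and hence Theorem~\ref{tm:LZ}; together with the already-noted inclusion $\Bscr_n\subseteq\Cscr_n$ this gives Corollary~\ref{cor:C=B}.

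I expect the main obstacle to be the construction and legality of $P$ in the last step: although $D$ pins down the critical vertices uniquely, one must still prove that $D$ genuinely arises as the critical set of a legal path of $T'$ (existence of the connecting zigzags inside the principal components of the $H_h$) and that expansion along it reproduces the \emph{exact} membership pattern of $C$ and not merely a collection of the right cardinality. This is where weak separation has to be converted into the local color-monotonicity of \refeq{legal}; by contrast the two separation lemmas above are comparatively routine once the special role of the maximal element $n$ is isolated.
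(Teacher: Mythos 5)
Your route is the same as the paper's: project $C$ to $C'\subseteq 2^{[n-1]}$, identify the doubled family $D$ (the paper's separator $S$ of~\refeq{separ}), obtain a g-tiling $T'$ with $B_{T'}=C'$ by induction, and recover $C$ as the spectrum of the $n$-expansion of $T'$ along a legal path whose critical vertices are exactly $D$, using~\refeq{decompBT} and the fact that $(C',S)$ determines $C$. Your two separation facts are correct and give a self-contained derivation of what the paper simply imports from~\cite{LZ} (namely~\refeq{separ} and~\refeq{Cq}); that is a minor, welcome difference.

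The step you yourself flag as the main obstacle is, however, a genuine gap, and it is precisely the content of the paper's Lemma~\ref{lm:MNS}. Corollary~\ref{cor:critic} only asserts that a legal path is \emph{determined} by its critical vertices; it does not assert that an arbitrary choice of one non-terminal vertex per level is \emph{realized} as the critical set of some legal path. To get existence you must show (a) that each $d_h\in D$ lies in the principal component $K_h$ of $H_h$ (and in $K_{h+1}$), rather than in one of the star components attached to a black tile, and (b) that the tree-path in $K_h$ joining $d_{h-1}$ to $d_h$ runs from left to right, so that the colour conditions~\refeq{legal}(iv),(v) hold at the intermediate $\vee$- and $\wedge$-vertices. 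The paper proves (a) by contradiction: if $d_{h-1}$ sat in a star component at the top of a black tile $\tau(X;p,q)$, then $d_{h-1}=Xpq-\{i\}$ with $p<i<q$, while $Xj$ (for a leaving $j$-edge at $b(\tau)$, $p<j<q$) is a non-terminal vertex of the same level that is not $\prec$-comparable with $d_{h-1}$; this contradicts the relations $Y\prec S_h\prec Y'$ for $Y\in M_h$, $Y'\in N_h$ in~\refeq{prec-sep}(ii). Part (b) uses $S_{h-1}\prec S_h$ together with the transitivity statement~\refeq{AAA}. The good news is that the comparability relations~\refeq{prec-sep} follow from weak separation by exactly the style of case analysis you use for your facts (1) and (2) --- test $Y$ against $S_hn$, and $Y'n$ against $S_h$, exploiting that $n$ is maximal --- so your plan does close; but these are the pieces you still have to supply before the expansion step can be carried out.
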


In view of Theorem~\ref{tm:bas-til}, to obtain Theorem~\ref{tm:LZ}, it suffices
to show the following
  \begin{theorem} \label{tm:ws-til}
Any $C\in\Cscr_n$ is the spectrum $B_T$ of some g-tiling $T$ on $Z_n$.
  \end{theorem}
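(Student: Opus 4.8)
The plan is to prove Theorem~\ref{tm:ws-til} by induction on $n$, using the $n$-contraction/$n$-expansion machinery of the previous section, with the base case $n\le 2$ immediate. Given $C\in\Cscr_n$, I would delete the maximal element $n$ and set $C':=\{X-\{n\}\colon X\in C\}\subseteq 2^{[n-1]}$. Writing $C_0:=\{X\in C\colon n\notin X\}$ and $C_1:=\{X\subseteq[n-1]\colon Xn\in C\}$, one has $C'=C_0\cup C_1$, and the ``doubled'' family $D:=C_0\cap C_1=\{X\colon X,Xn\in C\}$ is the candidate for the set of critical vertices of a legal path. The goal is to show that $C'\in\Cscr_{n-1}$ (so that $C'=B_{T'}$ for a g-tiling $T'$ on $Z_{n-1}$, by the inductive hypothesis and Theorem~\ref{tm:bas-til}), and that the $n$-expansion of $T'$ along the legal path determined by $D$ reproduces $C$; then $C=B_T$ follows from Corollary~\ref{cor:TP-T}.

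First I would establish a \emph{deletion lemma}: if $X,Y\subseteq[n]$ are weakly separated, then so are $X-\{n\}$ and $Y-\{n\}$. This is a short case analysis on the relations $\prec$ and $\rhd$, the point being that $n$ is maximal, hence can never lie below anything: when $n\in X-Y$ one checks that $Y\prec X$ forces $X-\{n\}$ and $Y-\{n\}$ to be strongly separated, while the configurations $X\prec Y$ and $X\rhd Y$ are outright impossible. Consequently $C'$ is weakly separated. For the cardinality, the bound of~\cite{LZ} gives $|C'|\le\binom{n}{2}+1$, whence $|D|=|C|-|C'|\ge n$. The reverse inequality $|D|\le n$ follows from the sharper claim that each level contains \emph{at most one} doubled set: if $X\ne Y$, $|X|=|Y|=h$, and $X,Xn,Y,Yn\in C$, then comparing $Xn$ with $Y$ forces every element of $Y-X$ to precede every element of $X-Y$, while comparing $Yn$ with $X$ forces the opposite; since $X-Y,Y-X\ne\emptyset$ this is a contradiction. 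As doubled sets live in levels $0,\dots,n-1$, we conclude $|D|=n$ with exactly one set $D_h$ in each level $h$, and therefore $|C'|=\binom{n}{2}+1$, i.e. $C'\in\Cscr_{n-1}$.

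By induction $C'=B_{T'}$ for a unique g-tiling $T'$ on $Z_{n-1}$; its non-terminal vertices are precisely $C'$, and each $D_h$ is among them. It remains to exhibit a legal path $P$ in $G_{T'}$ whose critical vertices are exactly $D_0,\dots,D_{n-1}$, and to check $B_T=C$ for the $n$-expansion $T$ of $T'$ along $P$. For the latter I would use the explicit description~\refeq{decompBT} of $B_T$ together with~\refeq{c-w-v}: a legal path with critical set $D$ partitions $C'=V(G_{T'})$ into its strictly-left part, its strictly-right part, and $D$, and by~\refeq{c-w-v} a $\vee$-vertex $X$ contributes $Xn$ (not $X$) to $B_T$ while a $\wedge$-vertex contributes $X$ (not $Xn$). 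Matching this against the partition $C'=(C_0\setminus C_1)\sqcup(C_1\setminus C_0)\sqcup D$ dictated by $C$, the required path must separate $L:=C_0\setminus C_1$ (sets not carrying $n$) from $R:=C_1\setminus C_0$ (sets carrying $n$), with all $\vee$-vertices in $R$, all $\wedge$-vertices in $L$, and $D$ lying on the cut.

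The main obstacle is precisely the construction and legality of this separating path. My plan is to build $P$ level by level: for each $h$ I would join $D_{h-1}$ to $D_h$ by the ordinary or zigzag subpath lying in the principal component of the forest $H_h$ (Lemma~\ref{lm:Hh}), which is well defined because $D_{h-1},D_h$ are non-terminal and, by the deletion lemma applied to pairs drawn from $L,R,D$, no white edge between levels $h-1$ and $h$ can put an $L$-vertex and an $R$-vertex on the same side of the cut. The legality conditions~\refeq{legal}(iii)--(v) and the absence of terminal vertices on $P$ would then be checked via the local structure~\refeq{mixedvert} and the fact that $D$ meets $LP_{T'}$, while uniqueness of $P$ given its critical vertices is Corollary~\ref{cor:critic}, so no choices remain. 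Feeding $(T',P)$ into Corollary~\ref{cor:TP-T} yields a g-tiling $T$ on $Z_n$, and the computation~\refeq{decompBT} gives $B_T=C_0\cup\{Xn\colon X\in C_1\}=C$, completing the induction. The delicate weak-separation bookkeeping guaranteeing that $L$ and $R$ are genuinely separated by a single monotone path, rather than interleaving across levels, is where I expect the real difficulty to lie.
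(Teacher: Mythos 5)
Your overall route is the paper's: induct on $n$, project $C$ to $C'\subseteq 2^{[n-1]}$, identify the doubled sets $D=\{X\colon X,Xn\in C\}$ as the intended critical vertices of a legal path in the g-tiling $T'$ with $B_{T'}=C'$, and recover $C$ as the spectrum of the $n$-expansion via Corollary~\ref{cor:TP-T}. Two of your preliminary steps are sound and even a little more self-contained than the paper's, which simply cites~\cite{LZ} for them: your deletion lemma is the paper's~\refeq{Cq}, and your "at most one doubled set per level" argument (playing $Xn$ against $Y$ and $Yn$ against $X$) correctly reproves~\refeq{separ}.

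The genuine gap is exactly the step you defer: the existence and legality of the path with critical set $D$, i.e.\ the paper's Lemma~\ref{lm:MNS}, together with the final identification $B_T=C$. Both rest on $\prec$-comparability facts about the separator that you never establish, namely~\refeq{prec-sep}: $D_0\prec D_1\prec\cdots\prec D_{n-1}$, and every other member of $C'$ at level $h$ is $\prec$-comparable with (in fact sandwiched around) $D_h$ according to whether it comes from $C_0$ or $C_1$. Without (ii) you cannot place $D_{h-1}$ and $D_h$ in the \emph{principal} component $K_h$ of $H_h$: your justification "well defined because $D_{h-1},D_h$ are non-terminal" does not work, since by Lemma~\ref{lm:Hh}(ii) the leaves of a star component are also non-terminal, so non-terminality alone does not exclude $D_{h-1}$ from a star component (the paper rules this out by exhibiting, at the bottom vertex of the offending black tile, a non-terminal vertex $Xj$ at level $h-1$ that is $\prec$-incomparable with $D_{h-1}$, contradicting~\refeq{prec-sep}(ii)). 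Without (i) you cannot conclude that the subpath of $K_h$ joining $D_{h-1}$ to $D_h$ runs left-to-right, which is what makes it satisfy~\refeq{legal}(iv),(v) and hence be legal. Finally, your closing computation "$B_T=C_0\cup\{Xn\colon X\in C_1\}=C$" is asserted, not derived:~\refeq{decompBT} expresses $B_T$ through the left/right subgraphs $G'_P,G''_P$, and matching that partition against $(C_0\setminus C_1)\sqcup(C_1\setminus C_0)\sqcup D$ again requires~\refeq{prec-sep}(ii) — the paper gets $B_T=C$ precisely because a largest ws-collection is uniquely reconstructible from the pair $(C',S)$, a fact that is a consequence of~\refeq{prec-sep}. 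So the proposal is a correct skeleton, but the load-bearing lemma is missing and the one justification you offer for it is invalid.
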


This theorem is proved by combining additional facts established in~\cite{LZ}
and results from the previous sections. Let $C\in\Cscr_n$. To construct the
desired tiling for $C$, we consider the projection $C'$ of $C$ into
$2^{[n-1]}$, i.e., the collection of subsets $X\subseteq[n-1]$ such that either
$X\in C$ or $Xn\in C$ or both. Partition $C'$ into three subcollections
$M,N,S$, where
  $$
  M:=\{X\colon X\in C\not\ni Xn\},\;\;
  N:=\{X\colon Xn\in C\not\ni X\},\;\;
  S:=\{X\colon X,Xn\in C\}.
  $$
Also for $h=0,\ldots,n-1$, define
  $$
  C'_h:=\{X\in C'\colon |X|=h\},\;\; M_h:=M\cap C'_h,\;\;  N_h:=N\cap C'_h.
  $$
It is shown in~\cite{LZ} that
  \begin{numitem1}
for each $h=0,\ldots,n-1$, ~$S\cap C'_h$ contains exactly one element.
  \label{eq:separ}
  \end{numitem1}
We call $S$ the {\em separator} of $C'$ and denote its elements by
$S_0,\ldots,S_{n-1}$, where $|S_h|=h$. Property~\refeq{separ} implies that
$|C'|=|C|-|S|=\binom{n+1}{2}+1-n=\binom{n}{2}+1$, and as is shown in~\cite{LZ},
  \begin{numitem1}
$C'$ is a ws-collection, and therefore it is a {\em largest} ws-collection in
$2^{[n-1]}$.
  \label{eq:Cq}
   \end{numitem1}
Two more observations in~\cite{LZ} are:
  \begin{numitem1}
(i) $S_0\prec S_1\prec\cdots\prec S_{n-1}$;

(ii) for each $h=0,\ldots,n-1$, any sets $Y\in M_h$ and $Y'\in N_h$ satisfy
$Y\prec S_h$ and $\hphantom{(ii)}$ $S_h\prec Y'$.
  \label{eq:prec-sep}
  \end{numitem1}

An important consequence of~\refeq{prec-sep} is that the collection $C$ can be
uniquely restored from the pair $C',S$. Indeed, $C$ consists of the sets $X,Xn$
such that $X\in S$, the sets $X\subseteq[n-1]$ such that $X\prec S_{|X|}$, and
the sets $Xn$ such that $X\subseteq[n-1]$ and $X\succ S_{|X|}$.

The proof of Theorem~\ref{tm:ws-til} is led by induction on $n$. The result is
trivial for $n\le 2$. Let $n>2$ and assume by induction that there is a
g-tiling $T'$ on $Z_{n-1}$ such that $B_{T'}=C'$. Our aim is to transform $T'$
into a g-tiling on $Z_n$ whose spectrum is $C$. The crucial claim is the
following
  \begin{lemma} \label{lm:MNS}
There exists a legal path $P$ for $T'$ whose set of critical vertices coincides
with the separator $S$.
   \end{lemma}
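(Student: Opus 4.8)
The plan is to realize the separator $S=\{S_0,\dots,S_{n-1}\}$ as the sequence of critical vertices of a legal path, built level by level. By definition the critical vertices of a legal path occur one per level, with the extremes forced to be $p_0$ and $p_{n-1}$; since $|S_h|=h$, we have $S_0=\emptyset=p_0$ and $S_{n-1}=[n-1]=p_{n-1}$ (the only sets of those sizes), so $S$ is an admissible candidate. I would therefore construct $P=P_1\cdots P_{n-1}$, where $P_h$ runs inside $H_h$ (the white edges joining levels $h-1$ and $h$) from $S_{h-1}$ up to $S_h$. A first easy observation is that each $S_h\in C'=B_{T'}$ is non-terminal, so none of the designated junctions is a forbidden vertex. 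By Corollary~\ref{cor:critic} such a legal path, if it exists, is unique, so the whole content is existence.

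The heart of the argument is to place the consecutive separators $S_{h-1},S_h$ in a common component of the forest $H_h$, namely the principal component $K_h$ of Lemma~\ref{lm:Hh}. Once this holds, $P_h$ is forced to be the unique path between $S_{h-1}$ and $S_h$ in the tree $K_h$, and since every vertex of $K_h$ is non-terminal, $P_h$ contains no terminal vertex. To prove this membership I would exploit the $\prec$-separation recorded in \refeq{prec-sep}: at each level $h$ the set $S_h$ sits strictly $\prec$-between the sets of $M_h$ (to its left) and those of $N_h$ (to its right), and the $S_h$ themselves form the chain $S_0\prec\cdots\prec S_{n-1}$. The non-principal components of $H_h$ are stars centred at terminal vertices, i.e. at tops and bottoms of black tiles; their non-terminal leaves are exactly the vertices ``cut off'' in the shadow of a black tile, and I would argue that any such cut-off leaf is an extreme $M$- or $N$-type vertex, never the $\prec$-central separator. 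Translating the purely combinatorial betweenness \refeq{prec-sep} into the geometric statement that $S_{h-1}$ and $S_h$ lie on the common ``spine'' $K_h$ is where the real work lies, and this is the step I expect to be the main obstacle.

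Granting principal-component membership, legality of $P$ is checked against Lemma~\ref{lm:leg-exp}(ii). The no-terminal-vertex condition is immediate since every vertex of $K_h$ is non-terminal. For \refeq{legal}(iii) I observe that, as $P_h$ climbs from level $h-1$ to level $h$ in a tree whose edges alternate between the two levels, its edges alternate forward/backward, beginning and ending with a forward (upward) edge; at each junction $S_h$ the last edge of $P_h$ enters $S_h$ while the first edge of $P_{h+1}$ leaves it, so two backward edges never meet. Conditions \refeq{legal}(iv),(v) are the colour inequalities at $\wedge$- and $\vee$-vertices, which amount to the zigzag progressing monotonically rightward; I would derive these from the planar non-crossing structure of $H_h$ encoded in \refeq{3-path}, together with $S_{h-1}\prec S_h$, which fixes the correct left-to-right orientation. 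The same rightward monotonicity is what guarantees that $P$ stays \emph{simple} across consecutive subpaths, i.e. that no level-$(h-1)$ vertex is revisited as both a $\wedge$-vertex of $P_{h-1}$ and a $\vee$-vertex of $P_h$.

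Assembling $P=P_1\cdots P_{n-1}$ then yields a simple path from $p_0$ to $p_{n-1}$ satisfying Lemma~\ref{lm:leg-exp}(ii), hence legal, whose unique critical vertex in level $h$ is $S_h$; thus its critical-vertex set is precisely $S$, as required. Feeding this $P$ into the $n$-expansion (Corollary~\ref{cor:TP-T}) and comparing with \refeq{c-w-v} and \refeq{decompBT}, I would afterwards identify $B_T$ with $C$ using the separation properties, thereby completing Theorem~\ref{tm:ws-til}. But the decisive and hardest point remains the principal-component membership of the separators in the previous step.
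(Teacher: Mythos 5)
Your overall architecture is exactly the paper's: build $P$ as the concatenation of paths $P_h$ from $S_{h-1}$ to $S_h$ inside the principal component $K_h$ of the forest $H_h$, use $S_{h-1}\prec S_h$ to force each $P_h$ to run left to right (hence legality and simplicity), and read off the critical vertices as $S_0,\ldots,S_{n-1}$. But you explicitly leave unproven the one claim that carries the entire lemma, namely that $S_{h-1}$ and $S_h$ actually lie in $K_h$ rather than in a star component. Saying that this is ``where the real work lies'' and that a cut-off leaf should be ``an extreme $M$- or $N$-type vertex'' is not an argument, and the heuristic you gesture at is not the one that works: the point is not that such a leaf is extreme, but that it is $\prec$-\emph{incomparable} with some other non-terminal vertex of the same level, which the separator cannot be.

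The paper's actual argument for this step is short and you should be able to reconstruct it. Suppose $S_{h-1}$ lies in a star component of $H_h$; then the white edge $e$ of $H_h$ leaving $S_{h-1}$ enters the top vertex of a black tile $\tau=\tau(X;p,q)$, so by Corollary~\ref{cor:termv}(iii) $e$ has some color $i$ with $p<i<q$ and $S_{h-1}=Xpq-\{i\}$. The bottom vertex $X$ of $\tau$ has a leaving white $j$-edge with $p<j<q$, and its head $Xj$ is non-terminal of height $h-1$. Then $S_{h-1}-Xj=\{p,q\}$ and $Xj-S_{h-1}=\{i,j\}$ with $p<i,j<q$, so $S_{h-1}$ and $Xj$ are not $\prec$-comparable, contradicting \refeq{prec-sep}(ii), which forces every non-terminal level-$(h-1)$ vertex into $M_{h-1}\cup N_{h-1}\cup\{S_{h-1}\}$ and hence $\prec$-comparable with $S_{h-1}$. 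The same argument places $S_h$ in $K_h$. Also note that your justification of the left-to-right orientation needs the transitivity statement \refeq{AAA} (applied along consecutive triples of vertices of $P_h$) to conclude $X\prec Y$ for the endpoints of a zigzag, not just \refeq{3-path}; the paper makes this explicit. Without the principal-component membership your proof does not establish the lemma.
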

 \begin{proof}~
It uses the following fact from~\cite{LZ}:
  \begin{numitem1}
if sets $A,A',A''\subseteq[n']$ are weakly separated, and if $|A|\le |A'|\le
|A''|$, ~$A\prec A'$ and $A'\prec A''$, then $A\prec A''$.
   \label{eq:AAA}
   \end{numitem1}

The desired path $P$ is constructed by relying on Lemma~\ref{lm:Hh}. For
$h=1,\ldots,n-1$, let $H_h$ be the subgraph of $G_{T'}$ induced by the white
edges connecting levels $h-1$ and $h$, and let $K_h$ be the principal component
of $H_h$. By Lemma~\ref{lm:Hh}, all vertices $X$ of $K_h$ are non-terminal,
whence $X\in C'$.

Consider two vertices $X,Y$ with $|X|\le|Y|$ in $K_h$. If they are connected by
edge, then $Y=Xi$ for some $i\in[n]$ and, obviously, $X\prec Y$. If the path
$P$ from $X$ to $Y$ in $K_h$ is such that its length (number of edges) $|P|$ is
at least two and $P$ goes from left to right, then $X\prec Y$ as well. (When
$X,Y$ belong to different levels, we say that $P$ goes from left to right if
the vertex $Y'$ preceding $Y$ in $P$ lies on the right from $X$ in the level
containing $X,Y'$.) Indeed, for any three consecutive vertices $Z,Z',Z''$ in
$P$ (occurring in this order) either $Z=Z'i$ and $Z''=Z'j$, or $Z=Z'-\{j\}$ and
$Z''=Z-\{i\}$ for some $i<j$ (since $Z''$ lies on the right from $Z$), which
implies $Z\prec Z''$. Then $X\prec Y$ follows from~\refeq{AAA} by the
transitivity.

We assert that the separating vertex $S_{h-1}$ belongs to $K_h$. Indeed,
suppose this is not so. Then $S_{h-1}$ belongs to a star component $K'$ of
$H_h$, and therefore, the white edge $e$ in $H_h$ leaving $S_{h-1}$ enters the
top vertex of some black tile $\tau=\tau(X;p,q)$. We have $S_{h-1}=Xpq-\{i\}$
and $p<i<q$, where $i$ is the color of $e$. On the other hand, the bottom
vertex $X$ of $\tau$ has a leaving $j$-edge $(X,Xj)$ for some $p<j<q$ (cf.
Corollary~\ref{cor:termv}). The vertex $Xj$ is non-terminal, and we have
$|Xj|=|S_{h-1}|$, ~$S_{h-1}-Xj=\{p,q\}$ and $Xj-S_{h-1}=\{i,j\}$. Since
$p<i,j<q$, we come to a contradiction with the fact that $S_{h-1}$ is
comparable by $\prec$ with any non-terminal vertex in level $h-1$. Thus,
$S_{h-1}$ is in $K_h$. Arguing similarly, one shows that $S_h$ is in $K_h$ as
well. Let $P_h$ be the path from $S_{h-1}$ to $S_h$ in $K_h$. Since
$S_{h-1}\prec S_h$ (by~\refeq{prec-sep}), ~$P_h$ goes from left to right (when
$|P_h|>1$).

Concatenating $P_1,\ldots,P_{n-1}$, we obtain a legal path $P$ for $T'$ of
which $h$-th subpath is $P_h$, and the critical vertices are
$S_0,\ldots,S_{n-1}$, as required.
  \end{proof}

Now we finish the proof of Theorem~\ref{tm:ws-til} as follows. Let $T$ be the
$n$-expansion of $T'$ along $P$. Then $B_T$ is a ws-collection, moreover, it is
a largest ws-collection since $|B_T|=\binom{n+1}{2}+1$. By~\refeq{decompBT} and
Corollary~\ref{cor:TP-T}, the projection of $B_T$ into $2^{[n-1]}$ (defined by
$X\mapsto X-\{n\}$) is just $B_{T'}=C'$ and, moreover, the set of $X\subseteq
[n-1]$ such that $X,Xn\in B_T$ is exactly the set of critical vertices in $P$,
i.e., $S$. Since the pair $C',S$ generates the corresponding largest
ws-collection in $2^{[n]}$ in a unique way, we obtain $B_T=C$, and
Theorem~\ref{tm:ws-til} follows.    \hfill \qed\qed

This completes the proof of Theorem~\ref{tm:LZ}.


\section{\Large Generalizations}  \label{sec:gen}

In this concluding section we outline two generalizations, omitting proofs.
They will be discussed in full, with details and related topics, in a separate
paper.
\smallskip

 {\bf A.}
The obtained relationships between semi-normal bases, proper wirings and
generalized tilings are extendable to the case of an integer $n$-box ${\bf
B}^{n,a}=\{x\in \Zset^{[n]}\colon 0\le x\le a\}$, where $a\in\Zset_+^n$. Recall
that a function $f$ on ${\bf B}^{n,a}$ is a TP-function if it satisfies
  \begin{eqnarray}
 &&f(x+1_i+1_k)+f(x+1_j)   \label{eq:boxTP} \\
 &&\qquad   =\max\{f(x+1_i+1_j)+f(x+1_k),\; f(x+1_i)+f(x+1_j+1_k)\}
       \nonumber
  \end{eqnarray}
for any $x$ and $1\le i<j<k\le n$, provided that all six vectors occurring as
arguments in this relation belong to ${\bf B}^{n,a}$, where $1_q$ denotes
$q$-th unit base vector. In this case the standard basis of the TP-functions
consists of the vectors $x$ such that $x_i,x_j>0$ for $i<j$ implies $x_q=a_q$
for $q=i+1,\ldots,j-1$ (see~\cite{DKK-08}, where such vectors are called {\em
fuzzy-intervals}). Normal and semi-normal bases are corresponding collections
of integer vectors in ${\bf B}^{n,a}$, defined by a direct analogy with the
Boolean case.

The semi-normal bases in the box case admit representations via natural
generalizations of proper wiring and g-tiling diagrams for the Boolean case.
They are described as follows.

The zonogon for a given $a$ is the set $Z_{n,a}:=\{\lambda_1\xi_1+\ldots+
\lambda_n\xi_n\colon \lambda_i\in\Rset,\; 0\le\lambda_i\le a_i,\;
i=1,\ldots,n\}$, where the vectors $\xi_i$ are chosen as above. For each
$i\in[n]$ and $q=0,1,\ldots,a_i$, define the point
$p_{i,q}:=a_1\xi_1+\ldots+a_{i-1}\xi_{i-1}+q\xi_i$ (on the left boundary of
$Z_{n,a}$) and the point $p'_{i,q}:=a_n\xi_n+\ldots a_{i+1}\xi_{i+1}+q\xi_i$
(on the right boundary). These points are regarded as the vertices on the
boundary of $Z_{n,a}$, and the edges in it are the directed line-segments
$p_{i,q-1}p_{i,q}$ and $p'_{i,q}p'_{i,q-1}$ When $q\ge 1$, we define $s_{i,q}$
($s'_{i,q}$) to be the median point on the edge $p_{i,q-1}p_{i,q}$ (resp.
$p'_{i,q}p'_{i,q-1}$).

A generalized tiling $T$ on $Z=Z_{n,a}$ is defined by essentially the same
axioms~(T1)--(T4) from Subsection~\ref{ssec:tiling}. A wiring $W$ on $Z$
consists of wires $w_{i,q}$ going from $s_{i,q}$ to $s'_{i,q}$, $i=1,\ldots,n$,
~$q=1,\ldots,a_i$. Again, it is defined by the same axioms~(W1)--(W3) from
Subsection~\ref{ssec:wiring}.

Note that for any $i$ and $1\le q<q'\le a_i$, the point $s'_{i,q}$ occurs
earlier than $s'_{i,q'}$ in the right boundary of $Z$ (beginning at $p_{0}$),
which corresponds to the order of $s_{i,q},s_{i,q'}$ in the left boundary of
$Z$. This and axiom~(W2) imply that the wires $w:=w_{i,q}$ and $w':=w_{i,q'}$
are always disjoint. Indeed, suppose that $w$ and $w'$ meet and take the first
point $x$ of $w'$ that belongs to $w$. Let $\Omega_0,\Omega_1$ be the connected
components of $Z-(P\cup P')$, where $P$ is the part of $w$ from $x$ to
$s'_{i,q}$, ~$P'$ is the part of $w'$ from $s_{i,q'}$ to $x$, and $\Omega_0$
contains $p_{0}$. Then the end point $s'_{i,q'}$ of $w'$ is in $\Omega_1$.
Furthermore, $w'$ crosses $w$ at $x$ from left to right (since $x$ is the first
point of $w'$ where it meets $w$); this implies that when passing $x$, the wire
$w'$ enters the region $\Omega_0$. Therefore, the part of $w'$ from $x$ to
$s'_{i,q'}$ must intersect $P\cup P'$ at some point $y\ne x$. But $y\in P$ is
impossible by~(W2) and $y\in P'$ is impossible because $w'$ is not
self-intersecting.

Like the Boolean case, for a g-tiling $T$, the spectrum $B_T$ is defined to be
the set of non-terminal vertices (viz. $n$-vectors) for $T$. For a wiring $W$
and an (inner) face $F$ of its associated planar graph, let $x(F)$ denote the
$n$-vector whose $i$-th entry is the number of wires $w_{i,q}$ such that $F$
lies on the left from $w_{i,q}$. Then $B_W$ is defined to be the collection of
vectors $x(F)$ over all non-cyclic faces $F$.

Theorems~\ref{tm:bas-wir} and~\ref{tm:bas-til} remain valid for these extended
settings (where $Z_n$ is replaced by $Z_{n,a}$), and proving methods are
essentially the same as those in
Sections~\ref{sec:bas-to-til}--\ref{sec:wir-to-til}, with minor refinements on
some steps. (E.g., instead of a unique dual $i$-path ($i$-strip) for each $i$,
we now deal with $a_i$ dual $i$-paths $Q_{i,1},\ldots,Q_{i,a_i}$, each
$Q_{i,q}$ connecting a boundary edge $p_{i,q-1}p_{i,q}$ to
$p'_{i,q-1}p'_{i,q}$, which does not cause additional difficulty in the proof.)
  \medskip

 {\bf B.}
The second generalization involves an arbitrary permutation $\omega$ on $[n]$.
(In fact, so far we have dealt with the longest permutation $\omega_0$, where
$\omega_0(i)=n+1-i$). For $i,j\in [n]$, we write $i\prec_\omega j$ if $i<j$ and
$\omega(i)<\omega(j)$. This relation is transitive and gives a partial order on
$[n]$. Let $\Xscr_\omega\subseteq 2^{[n]}$ be the set (lattice) of ideals $X$
of $([n],\prec_\omega)$, i.e., $i\prec_\omega j$ and $j\in X$ implies $i\in X$.
In particular, $\Xscr_\omega$ is closed under taking a union or intersection of
its members. Below we specify settings and outline how results concerning
$\omega_0$ can be extended to $\omega$.
\smallskip

(i) By a TP-function for $\omega$, or an $\omega$-{\em TP-function}, we mean a
function $f$ defined on the set $\Xscr_\omega$ (rather than $2^{[n]}$) and
satisfying~\refeq{P3} when all six sets in it belong to $\Xscr_\omega$. Note
that $Xi,Xk,Xij,Xjk\in\Xscr_\omega$ implies that each of $X,Xj,Xik,Xijk$ is in
$\Xscr_\omega$ as well (since each of the latter is obtained as the
intersection or union of a pair among the former). The notion of TP-basis is
extended to the set $\Tscr\Pscr_{\omega}$ of $\omega$-TP-functions in a natural
way. It turns out that the role of standard basis is now played by the set
$\Iscr_\omega$ of $\omega$-{\em dense} sets $X\in\Xscr_\omega$, which means
that there are no triples $i<j<k$ such that $i,k\in X\not\ni j$ and each of the
sets $X-\{i\}$, $X-\{k\}$ and $(X-\{i,k\})\cup\{j\}$ belongs to $\Xscr_\omega$.
In particular, $\Iscr_\omega$ contains the sets $[i]$, ~$\{i'\colon
i'\preceq_\omega i\}$ and $\{i'\colon \omega(i')\le\omega(i)\}$ for each
$i\in[n]$; ~when $\omega=\omega_0$, ~$\Iscr_\omega$ turns into the set
$\Iscr_n$ of intervals in $[n]$. (It is rather easy to prove that any
$\omega$-TP-function is determined by its values on $\Iscr_\omega$; this is
done by exactly the same method as applied in~\cite{DKK-08} to show a similar
fact for $\Tscr\Pscr_n$ and $\Iscr_n$. The fact that the restriction map
$\Tscr\Pscr_\omega\to \Rset^{\Iscr_\omega}$ is surjective (which is more
intricate) can be shown by extending a flow approach developed in~\cite{DKK-08}
for the cases of TP-functions on Boolean cubes and integer boxes.) Normal and
semi-normal bases for the $\omega$-TP-functions are defined via flips from the
standard basis $\Iscr_\omega$, by analogy with those for $\omega_0$.
\smallskip

(ii) Instead of the zonogon $Z_n$, we now should consider the region in the
plane bounded by two paths: the left boundary of $Z_n$ and the path $P_\omega$
formed by the points $p'_{0,\omega}:=p_0$ and
$p'_{i,\omega}:=\xi_{\omega^{-1}(1)}+\ldots+ \xi_{\omega^{-1}(i)}$ for
$i=1,\ldots,n$ ~connected by the (directed) line-segments
$e'_{1,\omega},\ldots,e'_{n,\omega}$, where $e'_{j,\omega}$ begins at
$p'_{\omega(j)-1,\omega}$ and ends at $p'_{\omega(j),\omega}$. (Then
$e'_{j,\omega}$ is a parallel translation of $\xi_j$. When $\omega=\omega_0$,
each $p'_{i,\omega}$ is just the point $p'_{n-i}$ on the right boundary of
$Z_n$. When $\omega$ is the identity, $p'_{i,\omega}$ coincides with the point
$p_i$ on $\ell bd(Z_n)$.) We denote this region as $Z_{\omega}$ and call it the
$\omega$-{\em deformation} of the zonogon $Z_n$, or, liberally, the
$\omega$-{\em zonogon}. A wiring for $\omega$ is a collection $W$ of wires
$w_1,\ldots,w_n$ in $Z_\omega$ satisfying axioms~(W1)--(W3) and such that each
$w_i$ begins at the point $s_i$ (as before) and ends at the median point
$s'_{i,\omega}$ of $e'_{i,\omega}$ (a wire $w_i$ degenerates into a point if
the boundary edges $p_{i-1}p_i$ and $e'_{j,\omega}$ coincide). Note that if
$i\prec_\omega j$ then $s'_{i,\omega}$ occurs earlier than $s'_{j,\omega}$ in
the right boundary $P_\omega$ of $Z_\omega$, and therefore, the wires $w_i$ and
$w_j$ does not meet (as explained in part~A above). This implies that all sets
in the full spectrum of $W$ belong to $\Xscr_\omega$.

In its turn, a generalized tiling $T$ for $\omega$ is defined in the same way
as for $\omega_0$, with the only differences that now the union of tiles in $T$
is $Z_\omega$ and that the corresponding shape $D_T$ is required to be simply
connected (then $D_T$ is homeomorphic to $Z_\omega$). (Depending on $\omega$,
points $p_{i,\omega}$ and $p'_{\omega(i),\omega}$ may coincide for some $i$, so
$D_T$ need not be a disc in general.) The constructions and arguments in
Sections~\ref{sec:til-to-wir},\ref{sec:wir-to-til}, based on planar duality,
can be transferred without essential changes to the $\omega$ case, giving a
natural one-to-one correspondence between the g-tilings and proper wirings for
$\omega$. (In particular, the fact that $i$-th wire $w_i$ in a proper wiring
$W$ for $\omega$ turns into the $i$-strip $Q_i$ in the corresponding g-tiling
$T$ (which begins with the $i$-edge $p_{i-1}p_i$ in $\ell bd(Z_\omega)$ and
ends with the $i$-edge $e'_{i,\omega}$ in $rbd(Z_\omega)$) implies that all
vertices of $G_T$ represent sets in $\Xscr_\omega$.) The arguments in
Sections~\ref{sec:bas-to-til},\ref{sec:til-to-bas} continue to work in the
$\omega$ case as well. As a result, we obtain direct generalizations of
Theorems~\ref{tm:bas-wir} and~\ref{tm:bas-til} to an arbitrary permutation
$\omega$.
\smallskip

(iii) A majority of results from Sections~\ref{sec:contr-exp} and~\ref{sec:wsf}
can be extended, with a due care, to the $\omega$ case as well. Below we give a
brief commentary, not coming into particular details. Let $\Cscr_\omega$ denote
the set of weakly separated collections $C\subseteq\Xscr_\omega$ whose
cardinality $|C|$ is maximum; let us call them {\em largest
$\omega$-ws-collections}, and denote the maximum $|C|$ by $c_\omega$. It is
shown in~\cite{LZ} that $c_\omega$ equals $\ell(\omega)+n+1$ (where
$\ell(\omega)$ is the minimum number of inversions for $\omega$); it follows
that
  $$
  c_\omega=c_{\omega'}+n-\omega(n)+1,
  $$
where $\omega'$ is the permutation on $[n-1]$ defined by
$\omega'(i):=\omega(i)-1$ if $\omega(i)>\omega(n)$, and $\omega'(i):=\omega(i)$
otherwise. Due to this, the projection $C'$ of $C\in\Cscr_\omega$ into
$2^{[n-1]}$ (defined by $X\mapsto X-\{n\}$) is a largest
$\omega'$-ws-collection, and the corresponding ``separator'' $S:=\{X\colon
X,Xn\in C\}$ consists of $n-\omega(n)+1$ sets $S_q$ of size $|S_q|=q$ for
$q=\omega(n)-1,\ldots,n-1$. The sets $M_q,N_q$ for $q=0,\ldots,n-1$ are defined
accordingly (in particular, all sets $X\in C$ with $|X|=q$ belong to $M_q$ when
$q<\omega(n)-1$).

Given a g-tiling $T'$ for $\omega'$, we define a legal path $P$ in $G_{T'}$ in
the same way as before (i.e., so as to satisfy conditions (iii),(iv),(v)
in~\refeq{legal} and the requirement that $P$ has no terminal vertices) with
the only difference that now $P$ should begin at the vertex
$p'_{\omega(n)-1,\omega}$ on the right boundary of the $\omega'$-zonogon.

The desired generalizations ($\omega$-analogs) of
Theorems~\ref{tm:LZ},~\ref{tm:ws-til} and Lemma~\ref{lm:MNS} are formulated in
a natural way, and their proofs remain essentially the same as before, relying
on the above definition of a legal path and corresponding direct extensions of
results on $n$-contractions and $n$-expansions to the $\omega$ case.

 \medskip
 \noindent{\bf Remark 7}~
In fact, the generalization in part~A is a special case of the one in part~B.
More precisely, given $a\in\Zset_+^n$, define $\bar a_i:=a_1+\ldots+a_i$,
$i=0,\ldots,n$ (letting $\bar a_0:=0$). Let us  form a permutation $\omega'$ on
$[\bar a_n]$ as follows: for $i=1,\ldots,n$ and $q=1,\ldots,a_i$,
  $$
  \omega'(\bar a_{i-1}+q):=\bar a_n-\bar a_{i}+q,
  $$
i.e., $\omega'$ permutes the blocks $B_1,\ldots,B_n$, where $B_i:=\{\bar
a_{i-1}+1,\ldots, \bar a_i\}$, according to the permutation $\omega_0$ on
$[n]$, and preserves the order of elements within each block. Then there is a
one-to-one correspondence between the vectors $x\in{\bf B}^{n,a}$ and the
ideals $X$ of $([\bar a_n],\prec_{\omega'})$, namely: $X\cap B_i$ consists of
the first $x_i$ elements of $B_i$, for each $i$. Under this
correspondence,~\refeq{boxTP} is equivalent to~\refeq{P3}. Although the shape
of the zonogon $Z_{n,a}$ looks somewhat different compared with $Z_{\omega'}$
(since the generating vectors $\xi_\bullet$ for different elements in a block
are non-colinear), it is easy to see that the wirings for the former and the
latter are, in fact, the same. (This implies an equivalence of the g-tilings
for these two cases, which is not seen immediately.) So the integer box case is
reduced, in all aspects we deal with, to the permutation one.


\section*{Appendix: TP-bases and weakly separated set-systems \\
\hphantom{Appendix:\;} on a hyper-simplex}

\refstepcounter{section}

Our results on TP-bases, generalized tilings and weakly separated set-systems
and techniques elaborated in previous sections enable us to obtain an analog of
the equivalence (i)$\Longleftrightarrow$(iv) in Theorem~A to hyper-simplexes.
Let us start with basic definitions and backgrounds.

When dealing with a hyper-simplex $\Delta_n^m=\{S\subseteq [n]\colon |S|=m\}$
rather than the Boolean cube $2^{[n]}$, the notion of TP-functions and TP-bases
are modified as follows. Let $f:\Delta_n^m\to\Rset$. Instead of
relation~\refeq{P3} involving triples $i<j<k$, one considers relation
  \begin{equation}  \label{eq:P4}
 f(Xik)+f(Xj\ell)=\max\{f(Xij)+f(Xk\ell),\; f(Xi\ell)+f(Xjk)\}
  \end{equation}
for a quadruple $i<j<k<\ell$ in $[n]$ and a subset $X\subseteq
[n]-\{i,j,k,\ell\}$ of size $m-2$. When this holds for all such $X,i,j,k,\ell$,
~$f$ is said to be a {\em TP-function} on $\Delta_n^m$. Let $\Tscr\Pscr_n^m$
denote the set of such functions $f$. By an analogy with the Boolean cube case,
a subset $B\subseteq\Delta_n^m$ is called a {\em TP-basis} if the restriction
map $\Tscr\Pscr_n^m\to\Rset^B$ is bijective.

An important instance of TP-bases for $\Delta_n^m$ is the collection
$\Iscr\Sscr_n^m=\Iscr^m\cup\Sscr^m$, where $\Iscr^m=\Iscr_n^m$ consists of the
intervals of size $m$ and $\Sscr^m=\Sscr_n^m$ consists of the sets of size $m$
representable as the union of two nonempty intervals $[1..p]$ and $[q..r]$ with
$q>p+1$ (see~\cite{DKK-08}, where the elements of $\Sscr^m$ are called {\em
sesquialteral intervals}).

When a TP-basis contains four sets $Xij,Xk\ell,Xi\ell,Xjk$ as above and one set
$Y\in\{Xik,Xj\ell\}$, the replacement of $Y$ by the other set $Y'$ in
$\{Xik,Xj\ell\}$ gives another TP-basis $B'$. We call such a transformation
$Y\rightsquigarrow Y'$ (or $B\mapsto B'$) a {\em raising (lowering) 4-flip} if
$Y=Xik$ (resp. $Y=Xj\ell$). One can see that $\Iscr\Sscr_n^m$ does not admit
lowering flips and we call this TP-basis {\em standard for} $\Delta_n^m$
(analogously to the basis $\Iscr_n$ for $2^{[n]}$ where weak lowering flips are
absent as well).

The object of our interest is the class $\Bscr_n^m$ of TP-bases that can be
obtained by making a series of 4-flips starting from $\Iscr\Sscr_n^m$. It is
analogous to the class $\Bscr_m$ of semi-normal bases for the Boolean cube case
(and $\Bscr_n^m$ along with the 4-flips on its members represents another
interesting sample of Pl\"ucker environments).

A direct calculation shows that $|\Iscr\Sscr_n^m|=m(n-m)+1$; so all TP-bases
for $\Delta_n^m$ have this cardinality. Besides, one can associate to
$B\in\Bscr_n^m$ the number $\eta(B):=\sum_{X\in B}\sum_{i\in X}i$. Clearly any
lowering 4-flip decreases $\eta$; we shall see later that any $B\in\Bscr_n^m$
is reachable from $\Iscr\Sscr_n^m$ by a series of merely raising 4-flips, and
therefore, $\eta(B)>\eta(\Iscr\Sscr_n^m)$ unless $B=\Iscr\Sscr_n^m$.

Our goal is to show that $\Bscr_n^m$ coincides with the set $\Cscr_n^m$ of {\em
largest} weakly separated collections $C\subseteq\Delta_n^m$, i.e., having
maximum possible cardinality $|C|$. We rely on two known facts. \smallskip

First, Scott~\cite{Sc-1} showed that if a ws-collection $C\subseteq \Delta_n^m$
contains four sets $Xij,Xk\ell,Xi\ell,Xjk$ (with $X,i,j,k,\ell$ as above), then
each of $Xik,Xj\ell$ is weakly separated from any member of $C$. (Note that
$Xik,Xj\ell$ are not weakly separated from each other.) \smallskip

This implies that any TP-basis in $\Bscr_n^m$ is weakly separated, since the
standard basis is such (which is easy to check; cf.~\cite{LZ}). \smallskip

Second, a simple, but important, fact noticed in~\cite{LZ} is that: for $0\le
m'\le m\le n$ and a ws-collection $C\subseteq 2^{[n]}$ whose members have size
at least $m'$ and at most $m$, if we add to $C$ all intervals of size $>m$ and
all co-intervals of size $<m'$, then the resulting collection is again weakly
separated. Let us call the latter collection the {\em straight extension} of
$C$ and denote it by $C^\ast$. Note that the numbers of added intervals and
co-intervals are $\binom{n-m+1}{2}$ and $\binom{m'+1}{2}$, respectively. When
$m'=m$, the fact that the maximum cardinality of a ws-collection in $2^{[n]}$
amounts to $\binom{n+1}{2}+1$ and the identity $\binom{n+1}{2}=\binom{n-m+1}{2}
+\binom{m+1}{2}+m(n-m)+1$ imply $|C|=|C^\ast|-\binom{n-m+1}{2}- \binom{m+1}{2}
\le m(n-m)+1=|\Iscr\Sscr_n^m|$.

Thus, $\Iscr\Sscr_n^m$ is a largest ws-collection in $\Delta_n^m$, implying
that all members of $\Bscr_n^m$ are such, i.e., $\Bscr_n^m\subseteq\Cscr_n^m$.
We show the following analog of a result from Section~\ref{sec:wsf} to
hyper-simplexes.
  \begin{theorem}  \label{tm:hyper}
Let $C\in\Cscr_n^m$ and $C\ne \Iscr\Sscr_n^m$. Then $C$ admits a lowering
4-flip (defined in the same way as for TP-bases). In particular, all members of
$\Cscr_n^m$ belong to one and the same orbit w.r.t. 4-flips.
  \end{theorem}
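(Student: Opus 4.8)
The plan is to reduce the statement to the Boolean g-tiling machinery via the straight extension, and then to descend by the weight $\eta$. First I would pass from $C\in\Cscr_n^m$ to its straight extension $C^\ast$. Since $C$ is a \emph{largest} ws-collection in $\Delta_n^m$, it has members of size exactly $m$, so the straight-extension discussion applies with $m'=m$ and shows $C^\ast$ is weakly separated; moreover the identity recorded there gives $|C^\ast|=|C|+\binom{n-m+1}{2}+\binom{m+1}{2}=\binom{n+1}{2}+1$. Hence $C^\ast$ is a largest ws-collection in $2^{[n]}$, and Theorem~\ref{tm:ws-til} supplies a g-tiling $T$ on $Z_n$ with $B_T=C^\ast$. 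By the very definition of $C^\ast$, the non-terminal vertices of $T$ are exactly the intervals on every level $>m$, exactly the co-intervals on every level $<m$, and exactly $C$ on level $m$. (We may assume $2\le m\le n-2$, the remaining cases being trivial.)

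Next I would pin down the location of the black tiles of $T$. If $T$ had a black tile of height $h>m$, then Proposition~\ref{pr:downflip} would produce a feasible W-configuration $CW(X;i,j,k)$ of the same height $h$, whose vertex $Xik$ sits on level $h>m$ and, having a gap at $j$, is not an interval --- contradicting that level $h$ carries only intervals. Applying the same argument to the reversed g-tiling $T^{rev}$ (whose low levels carry only intervals) rules out black tiles of height $<m$. Thus \emph{every} black tile of $T$ has height exactly $m$: such a tile $\tau(X';p,q)$ has terminal bottom $X'$ on level $m-1$, terminal top $X'pq$ on level $m+1$, and mixed vertices $X'p,X'q\in C$.

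The heart of the argument is to translate a lowering $4$-flip into the geometry of these height-$m$ black tiles. Writing $p=i$, $q=\ell$ and choosing an interior element $j\in X'\cap(i,\ell)$ (one exists because, by Corollary~\ref{cor:termv}(ii),(iii) and \refeq{termv}, the terminal top $X'pq$ must carry white edges, whose colors lie strictly between $p$ and $q$ and belong to $X'$), I would read off from the white tiles flanking $\tau$ at its bottom and top the octahedron faces surrounding the diagonal $X'q=Xj\ell$, where $X=X'-\{j\}$: the mixed vertices give $Xij,Xj\ell$, the white $j$-edge entering $X'pq=Xij\ell$ gives $Xi\ell$, and a white tile sharing the $\ell$-edge at the bottom gives $Xjk$ for a suitable $k\in(j,\ell)\setminus X$. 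A black tile whose interior added element is the \emph{lower} one $j$ (so that such a $k$ still remains, in contrast with the rightmost configuration $\tau(Xk;i,\ell)$ realized by $\Iscr\Sscr_n^m$) therefore exhibits, once the remaining frame face $Xk\ell$ is located, a feasible lowering $4$-flip $Xj\ell\rightsquigarrow Xik$ on $C$. To guarantee that a non-standard $C$ has such a tile I would run an extremal/alternating-cycle argument on the set of height-$m$ black tiles, in the spirit of the proof of Proposition~\ref{pr:downflip} together with the interval-propagation of Proposition~\ref{pr:int-lflip}: assuming that no lowering $4$-flip exists forces every black tile to be of the rightmost $Xk$-type, which by downward propagation through the levels forces every level-$m$ vertex to be an interval or a sesquialteral interval, i.e. $C=\Iscr\Sscr_n^m$.

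Finally, the orbit statement follows formally. By Scott's result~\cite{Sc-1} a lowering $4$-flip keeps us inside $\Cscr_n^m$, and it strictly decreases the bounded-below quantity $\eta$. Since $\Iscr\Sscr_n^m$ is the unique member of $\Cscr_n^m$ admitting no lowering flip (it admits none, and we have just shown every other $C$ admits one), iterating lowering flips carries any $C\in\Cscr_n^m$ down to $\Iscr\Sscr_n^m$; hence all of $\Cscr_n^m$ lies in a single $4$-flip orbit. The main obstacle is the third step --- proving that a non-standard $C$ actually contains a complete ``descending'' octahedron. The subtlety is that a lowering $4$-flip is a move \emph{within} level $m$, so it is not captured by a single Boolean flip of $T$ (which changes adjacent levels); locating the full five-set configuration, and in particular the opposite face $Xk\ell$, is exactly where the extremal argument must do genuine work.
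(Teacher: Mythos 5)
Your opening reductions are sound: passing to the straight extension $C^\ast$, invoking Theorem~\ref{tm:ws-til} to obtain a g-tiling $T$ on $Z_n$ with $B_T=C^\ast$, and using Proposition~\ref{pr:downflip} on $T$ and on $T^{rev}$ to confine all black tiles to height $m$. But the heart of the theorem is your third step, and there you have a plan rather than a proof. Two concrete problems. First, your contrapositive (``no lowering $4$-flip $\Rightarrow$ every black tile is of rightmost type $\Rightarrow$ $C=\Iscr\Sscr_n^m$'') says nothing when $T$ has no black tiles at all; a pure tiling whose levels $\ne m$ are forced would still have to be shown to satisfy $C=\Iscr\Sscr_n^m$, and your argument gives no handle on that case. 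Second, ``downward propagation through the levels'' cannot do the work you assign to it: Proposition~\ref{pr:int-lflip} propagates interval/co-interval structure between \emph{adjacent} levels via W-/M-configurations, i.e.\ via $3$-flips, whereas a $4$-flip lives entirely inside level $m$. Since every level other than $m$ is already completely determined (all intervals above, all co-intervals below), there is nothing left to propagate, and the whole combinatorial content of the theorem is concentrated in the single slab of height-$m$ tiles. Locating all five sets $Xij,Xk\ell,Xi\ell,Xjk,Xj\ell$ there --- in particular the opposite face $Xk\ell$, which is not a vertex of the black tile you start from --- is exactly what you have not done, as you yourself acknowledge.

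The idea you are missing is to use the extra coordinate rather than staying inside level $m$ of $Z_n$. The paper argues by induction on $n$: project $C$ to $C'\subseteq 2^{[n-1]}$, take the g-tiling $T'$ on $Z_{n-1}$ with $B_{T'}=C'$ (the $n$-contraction picture of Section~\ref{sec:contr-exp}), and look at level $m-1$ of $T'$. If $T'$ has a feasible M-configuration $CM(X;i,j,k)$ with $Xi,Xj,Xk$ at level $m-1$, these lift to $Xin,Xjn,Xkn\in C$, while $Xij,Xjk\in C$ already; these five sets form a lowering $4$-flip configuration for the quadruple $i<j<k<n$ --- the elusive fifth set comes for free because $\ell=n$. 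If no such M-configuration exists, Proposition~\ref{pr:int-lflip} forces level $m-1$ of $T'$ to consist of co-intervals, hence all members of $C$ containing $n$ are intervals or sesquialteral intervals, and the problem reduces to $C'_m\in\Cscr_{n-1}^m$, which is handled by induction. To salvage your direct approach you would have to reprove, inside the height-$m$ slab, something equivalent to this dichotomy; the induction on $n$ is what makes the paper's proof short.
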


In light of the above discussion, this gives the desired result:
  \begin{corollary} \label{cor:Bm=Cm}
$\Bscr_n^m=\Cscr_n^m$.
  \end{corollary}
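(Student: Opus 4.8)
The plan is to reduce the statement to the Boolean cube via the straight extension and then to locate the flip inside a generalized tiling. First I would note that, by Scott's theorem \cite{Sc-1} quoted above, it suffices to exhibit inside $C$ a \emph{feasible lowering configuration}: sets $Xij,Xk\ell,Xi\ell,Xjk,Xj\ell\in C$ with $|X|=m-2$ and $i<j<k<\ell$. Indeed, once the four witnesses $Xij,Xk\ell,Xi\ell,Xjk$ lie in $C$, Scott's theorem guarantees that $Xik$ is weakly separated from every member of $C$, so $(C-\{Xj\ell\})\cup\{Xik\}$ is again a ws-collection of the same (largest) cardinality; and since $i+k<j+\ell$ this is a genuine lowering $4$-flip (it strictly decreases $\eta$). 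Thus the entire content of the theorem is the \emph{existence} of such a configuration whenever $C\neq\Iscr\Sscr_n^m$.

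To find it I would pass to the cube. Form the straight extension $C^\ast$; since every member of $C$ has size exactly $m$, here $m'=m$, so $C^\ast$ adjoins all intervals of size $>m$ and all co-intervals of size $<m$. By the facts recalled above $C^\ast$ is weakly separated, and the cardinality identity gives $|C^\ast|=\binom{n+1}{2}+1$, so $C^\ast$ is a largest ws-collection in $2^{[n]}$. By Theorem~\ref{tm:ws-til}, $C^\ast=B_T$ for a generalized tiling $T$ on $Z_n$, and the non-terminal vertices of $T$ of height $m$ are exactly $C$, while for $h>m$ all non-terminal vertices of height $h$ are intervals and for $h<m$ all are co-intervals. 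Applying the remark after Proposition~\ref{pr:int-lflip} (for intervals above level $m$, and its reverse for co-intervals below level $m$) shows there is no feasible W-configuration of any height $>m$ and no feasible M-configuration of any height $<m$; combined with Proposition~\ref{pr:downflip} this confines \emph{all} black tiles of $T$ to height exactly $m$.

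The decisive step is to read a lowering configuration off a height-$m$ black tile. Such a tile $\tau(X';i,\ell)$ has $\ell(\tau)=X'i$ and $r(\tau)=X'\ell$ in $C$, and by Corollary~\ref{cor:termv}(iii) the white edges at its bottom $b(\tau)=X'$ carry colours $k$ with $i<k<\ell$, $k\notin X'$ (giving $X'k\in C$), while the white edges at its top $t(\tau)=X'i\ell$ carry colours $j$ with $i<j<\ell$, $j\in X'$ (giving $X'i\ell-\{j\}\in C$). Setting $X:=X'-\{j\}$ produces four of the five sets I need: $Xij=\ell(\tau)$, $Xj\ell=r(\tau)$, $Xjk=X'k$ and $Xi\ell=X'i\ell-\{j\}$. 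The missing fifth set is the witness $Xk\ell$, which is \emph{antipodal} to $Xij$ inside the $4$-cube $[X,Xijk\ell]$ and is therefore \emph{not} adjacent to $\tau$. I expect this to be the main obstacle. The point is that a black tile alone does not force a lowering flip — indeed $\Iscr\Sscr_n^m$ itself carries height-$m$ black tiles yet admits none — so one must choose the tile (and the colours $j<k$) extremally, in the spirit of the alternating-cycle argument of Proposition~\ref{pr:downflip} and using the partial order $\prec^\diamond$ of Corollary~\ref{cor:poset} on the height-$m$ black tiles, so that maximality forces $Xk\ell\in C$ (equivalently, forces the neighbouring black tile $\tau(X\ell;j,k)$, whose right vertex is $Xk\ell$); were $Xk\ell\notin C$, the extremal tile could be enlarged, a contradiction. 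Granting this, any $C\neq\Iscr\Sscr_n^m$ admits a lowering $4$-flip.

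Finally the orbit statement follows formally. Since $\eta$ is bounded below and strictly decreases under each lowering $4$-flip, iterating drives every $C\in\Cscr_n^m$ down to the unique flip-minimal element $\Iscr\Sscr_n^m$; as flips are reversible, any two members of $\Cscr_n^m$ are joined through $\Iscr\Sscr_n^m$, giving a single orbit, and together with the already-established inclusion $\Bscr_n^m\subseteq\Cscr_n^m$ this yields Corollary~\ref{cor:Bm=Cm}. (An alternative route, parallel to \cite{Po,Sc}, would develop a hyper-simplex tiling/wiring calculus in which a lowering $4$-flip is literally a single ``black'' object, making the decisive step a verbatim analogue of Proposition~\ref{pr:downflip}; I would nonetheless keep the cube reduction as the primary argument, since it reuses Theorem~\ref{tm:ws-til} directly.)
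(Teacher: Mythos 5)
Your overall framework is sound and partly parallels the paper: you correctly observe (via Scott's theorem) that everything reduces to exhibiting, in any $C\ne\Iscr\Sscr_n^m$, the five sets $Xij,Xk\ell,Xi\ell,Xjk,Xj\ell$ with $i<j<k<\ell$, and your passage to the cube via the straight extension $C^\ast$ and Theorem~\ref{tm:ws-til} is exactly how the paper begins its proof of Theorem~\ref{tm:hyper}. The closing orbit argument via monotonicity of $\eta$ is also fine. But the decisive step --- producing the fifth witness $Xk\ell$ --- is exactly where your argument stops being a proof. You acknowledge that a height-$m$ black tile of $T$ only yields four of the five sets, and you propose to force $Xk\ell\in C$ by choosing the black tile ``extremally'' with respect to $\prec^\diamond$ (Corollary~\ref{cor:poset}), asserting that maximality would otherwise allow the tile to be ``enlarged.'' No such enlargement mechanism is defined, and the relation $\prec^\diamond$ compares black tiles across adjacent levels via white edges into top vertices; it is not clear it says anything about the presence of the antipodal vertex $Xk\ell$ in the same level. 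Your own observation that $\Iscr\Sscr_n^m$ carries height-$m$ black tiles yet admits no lowering $4$-flip shows the statement is delicate; as written, ``granting this'' is granting the whole theorem.

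The paper closes this gap by a different device: induction on $n$ together with the $n$-contraction. One projects $C^\ast$ to a largest ws-collection $C'$ in $2^{[n-1]}$ realized by a g-tiling $T'$ on $Z_{n-1}$, and splits into two cases. If $T'$ has no feasible M-configuration of height $m-1$, then Proposition~\ref{pr:int-lflip} forces level $m-1$ of $C'$ to consist of co-intervals, which pins down the members of $C$ containing $n$ and reduces the problem to $C'_m\in\Cscr_{n-1}^m$, where the inductive hypothesis supplies the flip. If $T'$ does have a feasible M-configuration $CM(X;i,j,k)$ of height $m-1$, the five sets $Xi,Xj,Xk\in C'_{m-1}$ and $Xij,Xjk\in C'_m$ lift to $Xin,Xjn,Xkn,Xij,Xjk\in C$, giving the lowering configuration with $\ell=n$: here the problematic fifth witness is $Xkn$, and it comes for free because $Xk$ sits at level $m-1$ of the projection. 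In other words, the distinguished element $n$ plays the role of $\ell$, and the contraction/expansion machinery of Section~\ref{sec:contr-exp} replaces the extremal-tile argument you were hoping for. If you want to salvage your single-tiling approach, you would need to prove a genuine structural lemma about height-$m$ black tiles that guarantees the antipodal vertex, and at present no such lemma is available in the paper.
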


  \noindent{\bf Proof of Theorem~\ref{tm:hyper}}
~Note that $C$ contains all intervals and co-intervals of size $m$ (since they
are weakly separated from any member of $C^\ast$).

We use induction on $n$, assuming w.l.o.g that $0<m<n$. The ws-collection
$C^\ast$ is largest in $2^{[n]}$ and its projection $C'$ (defined as in
Section~\ref{sec:wsf}) is a largest ws-collection in $2^{[n-1]}$. Let $T$ be a
g-tiling on $Z_{n-1}$ whose spectrum $B_T$ is $C'$. Clearly for $h>m$ (resp.
$h<m-1$), the set $C'_h:=\{X\in C'\colon |X|=h\}$ consists of all intervals
(resp. all co-intervals) of size $h$ in $[n-1]$.

As to level $m$, all sets $X\in C'_m$ are exactly the members of $C$ not
containing the element $n$ (since there exists only one set $Y\in C^\ast$ with
$|Y|>m$, namely, the interval $[n-m..n]$, whose projection occurs in $C'_m$,
but the latter is the interval $[n-m..n-1]$, which belongs to $C\cap C'_m$).
And in level $m-1$, all members of $C'_{m-1}$ are exactly the projections of
those members of $C$ that contain $n$ (since there exists only one set $Y\in
C^\ast$ with $|Y|<m$, namely, the ``co-interval'' $[1..m-1]$, that occurs in
$C'_{m-1}$, but it is the projection of the co-interval $[1..m-1]\cup\{n\}$,
which belongs to $C$).

Now consider two possible cases. \smallskip

{\em Case 1}. ~Let $T$ have no feasible M-configuration of height $m-1$
(defined as in Subsection~\ref{ssec:part1}). Then, by
Proposition~\ref{pr:int-lflip}, $C'_{m-1}$ contains only co-intervals. It is
easy to see that, under the above projection map, the preimages in $C$ of these
co-intervals $X$ (which are exactly the members of $C$ containing $n$) are only
intervals and sesquialteral intervals. Also the facts that $C'$ is the straight
extension of $C'_m$ and that $C'$ is a largest ws-collection in $2^{[n-1]}$ (by
Corollary~\ref{cor:C=B}) imply $C'_m\in\Cscr_{n-1}^m$. Since
$C\ne\Iscr\Sscr_n^m$, the subcollection $C'_m$ of $C$ differs from
$\Iscr\Sscr_{n-1}^m$. By induction $C'_m$ admits a lowering 4-flip; this is
just a required 4-flip for $C$. \smallskip

{\em Case 2}. ~Let $T$ have a feasible M-configuration $CM(X;i,j,k)$ of height
$m-1$, i.e., $C'=B_T$ contains sets $Xi,Xj,Xk,Xij,Xjk$ with $|X|=m-2$ and
$i<j<k$. Since the first three sets among them belong to level $m-1$, $C$
contains the sets $Xin,Xjn,Xkn$. These together with the sets $Xij,Xjk$
contained in $C$ (since the latter ones are in level $m$ of $T$) give the
desired configuration (involving the quadruple $i<j<k<n$) for performing a
lowering 4-flip in $C$.

This completes the proof of the theorem.  \qed

  \medskip
  \noindent{\bf Remark 8.}
We can give an alternative method of proving the equality $\Bscr_n^m=\Cscr_n^m$
relying on results of Postnikov~\cite{Po} on alternating strand diagrams, or,
briefly, {\em as-diagrams}. One can outline the idea of this method as follows
(omitting details). Given $C\in\Cscr_n^m$, let $T$ and $W$ be, respectively,
the g-tiling and proper wiring with $B_T=B_W=C^\ast$. Take the maximum zigzag
paths $P,P'$ in the principal components $H_{m-1},H_m$ of the graph $G_T$,
respectively (see Section~\ref{sec:contr-exp}), both going from the vertex
$[m]$ to the vertex $[n-m+1..n]$. Then $P'$ passes (as vertices) all interval
of sizes $m$ and $m+1$, and one can see that the sequence of colors of its
edges is $m+1,1,m+2,2, \ldots, n,n-m$ (in this order on $P'$). In its turn, the
reverse path $P^{-1}$ of $P$ passes all co-intervals of sizes $m-1$ and $m$,
and the sequence of colors of its edges is $n-m+1,1,n-m+2,2, \ldots,n,m$. When
considering a planar layout of $G_T$ on the disc $D_T$, the concatenation
(circuit) $Q$ of $P'$ and $P^{-1}$ cuts out a smaller disc $\tilde D$ in $D_T$;
it is the union of (the squares representing) the tiles of height $m$ in $T$.

Take the parts $w'_i$ of wires $w_i\in W$ going across $\tilde D$, and denote
the beginning and end points of $w'_i$ by $u_i$ and $v_i$, respectively (both
$u_i,v_i$ are interior points of edges of color $i$ in $Q$, by the construction
in Section~\ref{sec:til-to-wir}). Let $S$ be the sequence of $2m$ points
$u_i,v_j$ along $P^{-1}$, and $S'$ the sequence of $2(n-m)$ points $u_i,v_j$
along $P'$. Partition $S$ (resp. $S'$) into $m$ (resp. $n-m$) consecutive
pairs; then each pair contains one ``source'' $u_i$ and one ``sink'' $v_j$ of
the wiring $W'=(w'_1,\ldots,w'_n)$. Now extend each wire in $W'$ within the
boundary $Q$ of $\tilde D$ so that, for the pairs $\{u_i,v_j\}$ as above, the
beginning of $w'_i$ coincide with the end of $w'_j$. One shows (a key) that the
resulting wiring forms an as-diagram $\Dscr$ of~\cite{Po}. Moreover, the
sequences of edge colors in $P^{-1},P'$ indicated above provide that the
corresponding permutation on $[n]$ associated to the set of directed chords
$(u_i,v_i)$ (when the above $n$ pairs are numbered clockwise) is the Grassmann
permutation $\omega$ for $m,n$, namely, $\omega(i)=m+i$ for $i=1,\ldots,n-m$,
and $\omega(i)=i-n+m$ otherwise. Finally, using properties of non-terminal
vertices of $T$ exhibited in~\refeq{mixedvert}, one shows that Postnikov's
operations (M1) (``square moves'') on $\Dscr$ correspond to 4-flips on $C$.
Then Theorem~\ref{tm:hyper} can be derived from a result (Theorem~13.4)
in~\cite{Po}. \bigskip

Next, the fact that the poset $(\Bscr_n^m,\prec)$ (where $B\prec B'$ if $B$ is
obtained from $B'$ by lowering 4-flips) has a unique minimal element (namely,
$\Iscr\Sscr_n^m$) easily implies that this poset has a unique maximal element.
This is $\Iscr_n^m\cup\>$co-$\Sscr_n^m$, where co-$\Sscr_n^m$ consists of the
$m$-sized sets representable as the union of two nonempty intervals $[p..q]$
and $[r..n]$ with $r>q+1$. Let us call this basis {\em co-standard} for
$\Delta_n^m$.

One more useful construction concerns an embedding of the set $\Bscr_n$ of
semi-normal TP-bases for the Boolean cube $2^{[n]}$ into TP-bases for some
hyper-simplex. More precisely, consider a hyper-simplex $\Delta_{n+n'}^n$ with
$n'\ge n$. For $X\subseteq[n]$, define $X^\Delta$ to be the union of $X$ and
the interval $[q..n+n']$ of size $n-|X|$ (which is empty when $X=[n]$). In
addition, let $\Ascr$ be the collection of all $n$-sized intervals $[p..q]$
with $n<q<n+n'$ and all $n$-sized sets $[p..q]\cup[r..n+n']$ with $n+1<q+1<r\le
n+n'$. Then the union of $\Ascr$ and the collection $\{I^\Delta\colon
I\in\Iscr_n\}$ forms the co-standard basis for $\Delta_{n+n'}^n$. In a similar
fashion, we associate to any $\Xscr\subseteq 2^{[n]}$ the collection $\Ascr\cup
\{X^\Delta\colon X\in \Xscr\}$, denoted as $\Xscr^\Delta$.
  \begin{prop} \label{pr:embed_in_Delta}
For any semi-standard TP-basis $B\in\Bscr_n$, one holds
$B^\Delta\in\Bscr_{n+n'}^n$.
  \end{prop}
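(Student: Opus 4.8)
The plan is to lift the flip sequence that produces $B$ from the standard basis $\Iscr_n$ to a sequence of $4$-flips in $\Delta_{n+n'}^n$, carried out on the $\Delta$-images. Since $B\in\Bscr_n$ is semi-normal, by definition there is a chain $\Iscr_n=B_0,B_1,\ldots,B_N=B$ in which each $B_{s+1}$ arises from $B_s$ by a single weak flip. The base image $\Iscr_n^\Delta=\Ascr\cup\{I^\Delta\colon I\in\Iscr_n\}$ is, as noted just above the proposition, the co-standard basis for $\Delta_{n+n'}^n$, hence lies in $\Bscr_{n+n'}^n$. Because $\Bscr_{n+n'}^n$ is, by definition, the set of bases reachable from $\Iscr\Sscr_{n+n'}^n$ by $4$-flips and $4$-flips are reversible, it then suffices to show that applying $\Delta$ to one Boolean weak flip $B_s\mapsto B_{s+1}$ yields a single $4$-flip $B_s^\Delta\mapsto B_{s+1}^\Delta$; iterating gives $B^\Delta=B_N^\Delta\in\Bscr_{n+n'}^n$.

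The heart of the matter is a direct bookkeeping computation. A Boolean flip with cortège $(X,i,j,k)$, $i<j<k$, swaps $Xj$ and $Xik$ in the presence of the four witnesses $Xi,Xk,Xij,Xjk$. Writing $t:=|X|$, one uses that $X^\Delta$ attaches to $X$ the top interval of $[n+n']$ of length $n-t$; the level-dependence of this interval is exactly what introduces a fourth index. Setting $\ell:=n'+t+2$ (so that $n<\ell<n+n'$, using $n'\ge n$ and $|X|\le n-3$) and $X':=X\cup[n'+t+3..n+n']$ (of size $n-2$), a short calculation gives $(Xik)^\Delta=X'ik$ and $(Xj)^\Delta=X'j\ell$, together with $(Xij)^\Delta=X'ij$, $(Xjk)^\Delta=X'jk$, $(Xi)^\Delta=X'i\ell$ and $(Xk)^\Delta=X'k\ell$. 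Thus the quadruple $i<j<k<\ell$ and the base $X'$ realize precisely the relation~\refeq{P4}: the swapped pair $\{X'ik,X'j\ell\}$ is the image of $\{Xik,Xj\}$, and the four witnesses $X'ij,X'k\ell,X'i\ell,X'jk$ are the images of $Xij,Xk,Xi,Xjk$. Hence the Boolean flip lifts to exactly one $4$-flip (a Boolean raising flip producing a lowering $4$-flip and vice versa, which is immaterial as the flip relation is symmetric).

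To conclude, since $X\mapsto X^\Delta$ is injective and, as one checks, its image is disjoint from the fixed padding collection $\Ascr$ (every $A\in\Ascr$ either is an interval missing the top element $n+n'$ or has its lower block reaching above $n$, neither of which can occur for an $X^\Delta$), we indeed get $B_{s+1}^\Delta=(B_s^\Delta\setminus\{Y^\Delta\})\cup\{(Y')^\Delta\}$ with $\Ascr$ untouched throughout. The witnesses required at each step are the $\Delta$-images of the Boolean witnesses present in $B_s$, so each lifted $4$-flip is legitimate, and the induction closes. The main obstacle is precisely the index bookkeeping of the middle paragraph: verifying that the single extra coordinate $\ell$ produced by the level shift slots correctly into all six sets, and that the padding $\Ascr$ neither collides with the images nor participates in any flip. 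Once this is confirmed, the result follows at once.
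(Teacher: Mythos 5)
Your proof is correct and follows essentially the same route as the paper: the core is the identical computation that a Boolean weak flip with cortege $(X,i,j,k)$ lifts, via $\ell=n'+|X|+2$ and $X'=X\cup[\ell+1..n+n']$, to a single $4$-flip on the quadruple $i<j<k<\ell$, with witnesses mapping to witnesses. The only (immaterial) organizational difference is that you induct forward along the defining flip chain starting from $\Iscr_n^\Delta$ (the co-standard basis), whereas the paper inducts downward by locating a W-configuration in $B$ and performing a lowering flip.
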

   \begin{proof}
Let $B\ne \Iscr_n$. Then $B$ contains sets $Xi,Xk,Xij,Xik,Xjk$ for some $i<j<k$
and $X\subseteq[n]-\{i,j,k\}$. Under the transformation $Y\mapsto Y^\Delta$,
these sets turn into the sets (respectively) $Xi\cup[\ell..n+n'],
Xk\cup[\ell..n+n'], Xij\cup[\ell+1..n+n'],Xik\cup[\ell+1..n+n'],
Xjk\cup[\ell+1..n+n']$ in $\Delta_{n+n'}^m$, where $\ell:=n'+|X|+2$. Then the
lowering flip $Xik\rightsquigarrow Xj$ in $B$ corresponds to the raising flip
$X'ik\rightsquigarrow X'j\ell$ in $B^\Delta$, where $X'=:X\cup[\ell+1..n+n']$,
and the result follows.
   \end{proof}

The embedding $B\mapsto B^\Delta$ of the TP-bases for $2^{[n]}$ into TP-bases
for $\Delta_{n+n'}^n$ can be regarded as a certain dual analog of the straight
extension map $B'\mapsto (B')^\ast$ of the TP-bases for $\Delta_n^m$ to
TP-bases for $2^{[n]}$. \medskip

We conclude this section with a generalization to a truncated Boolean cube
$\Delta_n^{m',m}:= \{S\subseteq[n]\colon m'\le |S|\le m\}$, where $0\le m'\le
m\le n$. In this case, one considers the class $\Tscr\Pscr_n^{m',m}$ of
functions $f:\Delta_n^{m',m}\to\Rset$ obeying relations~\refeq{P3}
and~\refeq{P4} for all corresponding corteges where the six sets occurring as
arguments belong to $\Delta_n^{m',m}$. (In fact, it suffices to require
that~\refeq{P3} be imposed everywhere but that~\refeq{P4} be explicitly imposed
only for the corteges related to the lowest level, i.e., when $|X|=m'-2$.
Then~\refeq{P4} for the other corteges will follow; see~\cite{DKK-08}.)

The class $\Tscr\Pscr_n^{m',m}$ has as a basis the set $\Iscr\Sscr_n^{m',m}:=
\Sscr_n^{m'}\cup\Iscr_n^{m'}\cup\Iscr_n^{m'+1}\cup\ldots
\cup\Iscr_n^m$~\cite{DKK-08}, called the standard basis for this case. We
define $\Bscr_n^{m',m}$ to be the set of bases obtained from the standard one
by a series of weak 3-flips (i.e., related to~\refeq{P3}) and 4-flips. (From
the theorem below it follows that the latter ones are important only in level
$m'$.) On the other hand, we can consider the set $\Cscr_n^{m,m'}$ of largest
ws-collections in $\Delta_n^{m',m}$ (whose cardinality is $\binom{n+1}{2}
-\binom{n-m+1}{2}-\binom{m'+1}{2}=|\Iscr\Sscr_n^{m',m}|$). By explanations
above, $\Bscr_n^{m,m'}\subseteq\Cscr_n^{m,m'}$.
   \begin{theorem}  \label{tm:truncube}
Let $C\in\Cscr_n^{m,m'}$. Then $\Iscr\Sscr_n^{m',m}$ can be obtained from $C$
by a series of lowering weak 3-flips followed by a series of lowering 4-flips
in level $m'$. Therefore, $\Bscr_n^{m,m'}=\Cscr_n^{m,m'}$.
   \end{theorem}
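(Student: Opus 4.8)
The plan is to reduce an arbitrary $C\in\Cscr_n^{m,m'}$ to the standard basis $\Iscr\Sscr_n^{m',m}$ in two stages matching the two flip types in the statement, passing through the Boolean cube via the straight extension. First I would form $C^\ast$, the straight extension of $C$ to $2^{[n]}$. Since $|C|$ is maximal, $|C^\ast|=|C|+\binom{n-m+1}{2}+\binom{m'+1}{2}=\binom{n+1}{2}+1$, so $C^\ast$ is a largest ws-collection in $2^{[n]}$; by Corollary~\ref{cor:C=B} one has $C^\ast=B_T$ for some g-tiling $T$ on $Z_n$. The frozen part of $C^\ast$ (all intervals in levels $>m$ and all co-intervals in levels $<m'$) means that every non-terminal vertex of $T$ above level $m$ is an interval.

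First stage (lowering weak $3$-flips). As long as the current g-tiling has a feasible W-configuration $CW(X;i,j,k)$ of height $h$ with $m'+1\le h\le m$, I would perform the associated lowering flip of Proposition~\ref{pr:Tflip}. Such a flip replaces $Xik$ (of size $h$) by $Xj$ (of size $h-1$); since $m'\le h-1$ and $h\le m$, every set it touches lies in levels $[m',m]$, so it is a legitimate weak $3$-flip inside $\Delta_n^{m',m}$ that leaves the frozen levels untouched and strictly decreases $\sum_X|X|$. The process therefore terminates with a g-tiling having no feasible W-configuration at any height in $[m'+1,m]$. Applying Proposition~\ref{pr:int-lflip} top-down, beginning from level $m+1$ (all intervals) and descending through $h=m,m-1,\dots,m'+1$, I conclude that every non-terminal vertex in levels $m'+1,\dots,m$ is now an interval. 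Note that no control over height $m'$ is ever required here, which is precisely why level $m'$ is deferred to the second stage.

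Second stage (lowering $4$-flips in level $m'$). Let $C_{m'}$ be the collection of level-$m'$ sets of the current collection. It is weakly separated, being a subcollection of a ws-collection, and a direct count using the identity $\binom{n+1}{2}=\binom{m'+1}{2}+\binom{n-m'+1}{2}+m'(n-m')$ gives $|C_{m'}|=m'(n-m')+1=|\Iscr\Sscr_n^{m'}|$; hence $C_{m'}\in\Cscr_n^{m'}$. By Theorem~\ref{tm:hyper} (equivalently Corollary~\ref{cor:Bm=Cm}), $C_{m'}$ can be carried to $\Iscr_n^{m'}\cup\Sscr_n^{m'}$ by a series of lowering $4$-flips. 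Each such flip exchanges two sets of the same size $m'$, so it is a legal lowering $4$-flip in level $m'$ of the ambient collection that leaves the interval levels $m'+1,\dots,m$ intact. After both stages the collection equals $\Iscr_n^{m'}\cup\Sscr_n^{m'}\cup\Iscr_n^{m'+1}\cup\dots\cup\Iscr_n^{m}=\Iscr\Sscr_n^{m',m}$, proving the first claim; reversing all flips then exhibits $C$ as reachable from the standard basis, whence $\Bscr_n^{m,m'}=\Cscr_n^{m,m'}$.

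The main obstacle I anticipate is the bookkeeping of the first stage: one must verify carefully that each lowering $3$-flip genuinely stays inside $\Delta_n^{m',m}$ and never perturbs the frozen interval and co-interval levels, and that the exhaustion of W-configurations at heights $m'+1,\dots,m$ is exactly the hypothesis needed to run Proposition~\ref{pr:int-lflip} level by level. The transition to the second stage hinges on the cardinality computation certifying $C_{m'}$ as a largest ws-collection in $\Delta_n^{m'}$, which is what allows the already-proved hyper-simplex result Theorem~\ref{tm:hyper} to be invoked as a black box.
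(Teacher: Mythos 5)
Your proposal is correct and follows essentially the same route as the paper: exhaust lowering weak $3$-flips, invoke Proposition~\ref{pr:int-lflip} level by level to force all sets of size $>m'$ to be intervals, and then apply Theorem~\ref{tm:hyper} to the level-$m'$ slice, which a cardinality count identifies as a largest ws-collection in $\Delta_n^{m'}$. The paper states this in four lines; you have merely made explicit the passage through the straight extension, the g-tiling of $C^\ast$, and the counting, all of which the paper leaves implicit.
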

   \begin{proof}
~If $C$ admits a lowering 3-flip, then performing such a flip produces a
collection in $\Cscr_n^{m,m'}$ with a smaller total size of its members. If
such a flip is impossible, then all sets $X\in C$ with $|X|>m'$ are intervals,
by Proposition~\ref{pr:int-lflip}. Then $\{X\in C\colon |X|=m'\}$ is a largest
ws-collection for the hyper-simplex $\Delta_n^{m'}$, and the result follows
from Theorem~\ref{tm:hyper}.
  \end{proof}


\end{document}